\newtheorem{thm}{Theorem}[section]
\newtheorem{cor}[thm]{Corollary}
\newtheorem{prop}[thm]{Proposition}
\newtheorem{lem}[thm]{Lemma}
\newtheorem{quest}[thm]{Question}
\theoremstyle{definition}
\newtheorem{defn}[thm]{Definition}
\newtheorem{notn}[thm]{Notation}
\theoremstyle{remark}
\newtheorem{rem}[thm]{Remark}
\newcommand{\R}{\Bbb{R}}
\newcommand{\Q}{\Bbb{Q}}
\newcommand{\OO}{\mathcal{O}}
\newcommand{\CC}{\mathcal{C}}
\newcommand{\WW}{\mathcal{W}}
\newcommand{\F}{\Bbb{F}}
\newcommand{\Z}{\Bbb{Z}}
\newcommand{\N}{\Bbb{N}}
\newcommand{\m}{\frak{m}}
\newcommand{\res}{\text{res}}
\newcommand{\Mod}{\text{Mod}}
\newcommand{\Ind}{\text{Ind}}
\newcommand{\Fil}{\text{Fil}}
\newcommand{\ind}{\text{ind}}
\newcommand{\gr}{\text{gr}}
\newcommand{\Hom}{\text{Hom}}
\newcommand{\Ext}{\text{Ext}}
\newcommand{\Gal}{\text{Gal}}
\newcommand{\GL}{\text{GL}}
\newcommand{\Rep}{\text{Rep}}
\newcommand{\Tor}{\text{Tor}}
\newcommand{\SL}{\text{SL}}
\newcommand{\y}{\hspace{6pt}}
\let\c@equation\c@thm
\numberwithin{equation}{section}
\title{A vanishing result for higher smooth duals}
\author{Claus Sorensen}
\date{}
\begin{document}

\begin{abstract}
In this paper we prove a general vanishing result for Kohlhaase's higher smooth duality functors $S^i$. If $G$ is any unramified connected reductive $p$-adic group, $K$ is a hyperspecial subgroup, and $V$ is a Serre weight, we show that $S^i(\ind_K^G V)=0$ for $i>\dim(G/B)$ where $B$ is a Borel subgroup. (Here and throughout the paper $\dim$ refers to the dimension over $\Q_p$.) This is due to Kohlhaase for 
$\GL_2(\Q_p)$ in which case it has applications to the calculation of $S^i$ for supersingular representations. Our proof avoids explicit matrix computations by making use of Lazard theory, and we deduce our result from an analogous statement for graded algebras via a spectral sequence argument. The graded case essentially follows from Koszul duality between symmetric and exterior algebras. 
\end{abstract}

\maketitle

\tableofcontents


\onehalfspacing

\section{Introduction}

Dual representations are ubiquitous in the classical Langlands program. For one thing they appear in the functional equation for automorphic $L$-functions. In the $p$-adic Langlands program, and its counterpart modulo $p$, dual representations play a prominent role as well. Say $\pi$ is a smooth representation of some $p$-adic reductive group $G$, having coefficients in a subfield $E \subset \bar{\F}_p$. Then the full linear dual $\pi^{\vee}=\Hom_E(\pi,E)$ lives in a completely different category of (pseudocompact) modules over the Iwasawa algebra $\Lambda(G)=E[G]\otimes_{E[K]} E[\![K]\!]$ where $K \subset G$ is an arbitrary compact open subgroup. One can of course take the smooth vectors of $\pi^{\vee}$ and get a smooth representation $S^0(\pi)=(\pi^{\vee})^{\infty}$ -- but this is often zero! In fact $S^0(\pi)=0$ for all irreducible admissible $\pi$ which are infinite-dimensional. Moreover, the smooth dual functor $S^0$ is not exact. In \cite{Koh17} Kohlhaase extended $S^0$ to a $\delta$-functor consisting of certain higher smooth duality functors 
$$
S^i: \Rep_E^{\infty}(G) \longrightarrow \Rep_E^{\infty}(G)
$$
defined as $\Sigma^i \circ (\cdot)^{\vee}$ where $\Sigma^i$ extends the smooth vectors functor $(\cdot)^{\infty}$ ("stable cohomology"), and he establishes foundational results on the functorial properties of the $S^i$. Some aspects of this are analogous to (and in fact rely on) the theory of Schneider and Teitelbaum for locally analytic representations \cite{ST05} in which they (among other things) produce an involutive functor from $D_{\text{adm}}^b\big(\Rep_{\mathcal{E}}^{\infty}(G)\big)$ to itself, which is compatible with a natural duality on the derived category of coadmissible modules over the locally analytic distribution algebra $D(G,\mathcal{E})$. Here $\mathcal{E}/\Q_p$ is a $p$-adic field and 
$D_{\text{adm}}^b$ is the bounded derived category of complexes with admissible cohomology, cf. \cite[Cor.~3.7]{ST05} and the diagram at the bottom of p. 317 there. Kohlhaase does not mention it explicitly, but one can easily prove the analogous result for mod $p$ coefficients $E$ and define a functor $S$ from $D^b\big(\Rep_{E}^{\infty}(G)\big)$ to itself which becomes involutive on the subcategory
$D_{\text{adm}}^b\big(\Rep_{E}^{\infty}(G)\big)$. Moreover, there is a spectral sequence starting from $S^i\big(h^j(V^{\bullet})\big)$ converging to
the cohomology of $S(V^{\bullet})$. In fact Emerton gave a series of lectures at the Institut Henri Poincar\'{e} in March 2010 in which he introduced a derived duality functor 
$\Bbb{D}_G$ similar to $S$ which goes into the proof of the Poincar\'{e} duality spectral sequence for completed (Borel-Moore) homology. His derived category "$D^b(G)$" is a bit subtler however.

Schneider proved in \cite[Thm.~9]{Sch15} that the unbounded derived category $D\big(\Rep_{E}^{\infty}(G)\big)$ is equivalent to the derived 
category of differential graded modules over a certain DGA variant of the Hecke algebra $\mathcal{H}_I^{\bullet}$, defined relative to a torsionfree pro-$p$ group $I\subset G$
(and a choice of injective resolution).
A key ingredient was \cite[Prop.~6]{Sch15} which shows that $\ind_I^G 1$ is a (compact) generator of $D\big(\Rep_{E}^{\infty}(G)\big)$. From this point of view, to understand $S$ on the whole derived category of $G$ we should first understand the higher smooth duals of the generator $S^i(\ind_I^G 1)$. In fact this ties to a question posed by Harris in \cite[Q.~4.5]{Har16}
as to whether there is any relation between Kohlhaase's $S^i$ and $E$-linear duality on $D(\mathcal{H}_I^{\bullet})$. We know $S^i=0$ for $i>\dim(G)$. Here and throughout the paper $\dim(\cdot)$ refers to the dimension as a $\Q_p$-manifold. Some of our main motivation for 
writing this paper was to show that at least for the generator $\ind_I^G 1$ one can improve this bound significantly and show that $S^i\big(\ind_I^G 1\big)=0$ for 
$i>\dim(G/B)$ where $B$ is a Borel subgroup. This follows from our main result (Theorem \ref{main}) below by taking $V=\ind_I^K 1$ there. In fact an easy inductive argument shows more generally that 
$$
S^i(\pi)=0 \y \y \y \forall i>\dim(G/B)+\ell
$$
if there is an exact sequence of length $\ell$ of the form
$$
0 \longrightarrow (\ind_I^G 1)^{\oplus r_{\ell}} \longrightarrow \cdots \longrightarrow (\ind_I^G 1)^{\oplus r_1} \longrightarrow (\ind_I^G 1)^{\oplus r_0} \longrightarrow \pi \longrightarrow 0
$$
for some $r_i \in \Z_{>0}$. Of course this bound on $i$ is trivial unless $\ell\leq \dim(B)$.
We do not know in which generality such resolutions exist\footnote{Ollivier has constructed analogous resolutions when $\pi$ is a principal series representation of $\GL_n$ in \cite[Thm.~1.1]{Oll14}.} for a general group $G$. For groups of semisimple rank one, Kohlhaase has constructed a class of representations in \cite{Koh18} to which our bound applies.
We also do not know whether the bound $\dim(G/B)$ is sharp for $\pi=\ind_I^G 1$ in the sense that $S^{\dim(G/B)}(\ind_I^G 1)$ is nonzero -- even in the case of $\GL_2(\Q_p)$.

We now state the main result of this paper which we alluded to above. Let $F/\Q_p$ be a finite extension, and $G_{/F}$ a connected reductive group with a Borel subgroup $B$. We assume that $G$ is unramified\footnote{Experts have informed us this restriction is likely to be unnecessary (with $K$ special and $B$ a minimal parabolic in general).} and choose a hyperspecial maximal compact subgroup $K \subset G$ along with a finite-dimensional smooth representation $K \rightarrow \GL(V)$ with coefficients in some algebraic extension $E/\F_p$. Let $\ind_K^GV$ be the compact induction. Then we have the following vanishing result for its higher smooth duals $S^i$.   

\begin{thm}\label{main}
$S^i(\ind_K^GV)=0$ for all $i>\dim(G/B)$.
\end{thm}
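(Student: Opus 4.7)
The proof strategy, as suggested by the abstract, proceeds in three main stages: first, express $S^i$ in terms of continuous cohomology over the Iwasawa algebra of a uniform pro-$p$ subgroup; second, use Lazard's theory to filter this algebra so that its associated graded is a symmetric algebra, and deduce the filtered vanishing from the graded one by a spectral sequence; third, invoke Koszul duality to convert the graded question to exterior-algebra cohomology, where the bound $\dim G/B$ emerges from the Iwasawa decomposition of $G$.

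Fix a uniform pro-$p$ subgroup $H \subseteq K$ with $\dim H = \dim G$ admitting an Iwahori factorization $H = H^- H^0 H^+$ adapted to $B = TN$ (for example, $H = \ker(K \to G(k))$). Set $M := (\ind_K^G V)^\vee$, a pseudocompact $\Lambda(G)$-module. By Kohlhaase's construction of $\Sigma^i$ as the derived functor of smooth vectors, the theorem reduces to bounding the continuous cohomology $H^i_{\cont}(H, M) = \Ext^i_{\Lambda(H)}(E, M)$. By Lazard's theorem, $\Lambda(H) = E[\![H]\!]$ carries a canonical filtration with associated graded $\Sym_E(\Lie H)$, a polynomial ring in $\dim G$ variables. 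Equipping $M$ with a compatible good filtration yields a graded $\Sym(\Lie H)$-module $\gr M$ and a convergent spectral sequence
$$E_1 = \Ext^{\ast}_{\Sym(\Lie H)}(E, \gr M) \Longrightarrow \Ext^{\ast}_{\Lambda(H)}(E, M).$$
It therefore suffices to prove $\Ext^i_{\Sym(\Lie H)}(E, \gr M) = 0$ for $i > \dim G/B$.

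By the Koszul resolution of $E$ over the polynomial ring $\Sym(\Lie H)$, the graded Ext is computed as the cohomology of the Koszul complex $\Lambda^{\bullet}(\Lie H)^{\vee} \otimes \gr M$. The Iwasawa decomposition $G = BK$ provides a $B$-equivariant identification $G/K \cong B/(B \cap K)$, and the Iwahori factorization of $H$ gives a direct sum decomposition $\Lie H = \Lie H^- \oplus \Lie(H \cap B)$ with $\dim \Lie(H \cap B) = \dim B$ and $\dim \Lie H^- = \dim G/B$. The central structural claim is that $\gr M$ is coinduced from the subalgebra $\Sym(\Lie H^-)$ up to $\Sym(\Lie H)$, that is,
$$\gr M \cong \Hom_{\Sym(\Lie H^-)}(\Sym(\Lie H), N)$$
for some graded $\Sym(\Lie H^-)$-module $N$. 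Granted this, Shapiro's lemma for the extension $\Sym(\Lie H^-) \hookrightarrow \Sym(\Lie H)$ yields $\Ext^i_{\Sym(\Lie H)}(E, \gr M) \cong \Ext^i_{\Sym(\Lie H^-)}(E, N)$, which vanishes for $i > \dim \Lie H^- = \dim G/B$ since $\Sym(\Lie H^-)$ is a polynomial ring in $\dim G/B$ variables.

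The main obstacle is the coinduction claim. The Iwasawa decomposition provides the geometric input — $G/K$ is a single $B$-orbit with stabilizer $B \cap K$ — so intuitively the ``transverse'' direction to the Borel, identified at the Lie-algebra level with $\Lie H^-$, should control the homological complexity. Translating this intuition into a statement about graded modules over $\Sym(\Lie H)$ requires choosing a filtration on $M$ simultaneously compatible with the Lazard filtration on $\Lambda(H)$ and the $B$-coset structure of $G/K$, and then verifying that the resulting graded module has the coinduced form above. This filtered-to-graded analysis is the technical heart of the argument and is what allows Lazard theory together with Koszul duality to replace the explicit matrix computations used by Kohlhaase in the $\GL_2(\Q_p)$ case.
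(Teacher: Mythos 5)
Your overall strategy---filter the Iwasawa algebra so that $\gr$ is a symmetric algebra, run a spectral sequence from graded Ext to filtered Ext, and use Koszul theory on the graded side---is the paper's strategy too. But there is a genuine gap that kills the argument as written: the claim that $\Ext^i_{\Sym(\Lie H)}(E,\gr M)=0$ for $i>\dim(G/B)$ is false, and in particular the coinduction isomorphism $\gr M\cong \Hom_{\Sym(\Lie H^-)}(\Sym(\Lie H),N)$ cannot hold. To see the failure concretely: restrict $M=I_K^G\check V$ to $H$ and pick out the trivial coset $K\cdot 1\in K\backslash G$. It is an $H$-orbit all by itself, contributing a direct factor $\check V$ to $M|_H$. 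Since $k$ has a finite free Koszul resolution, Ext commutes with the product and you get a direct factor $\Ext^i_{\gr\Lambda(H)}(E,\gr\check V)$ inside $\Ext^i_{\gr\Lambda(H)}(E,\gr M)$. By the K\"unneth formula for $\gr\Lambda(H)\cong\gr\Lambda(H^-)\otimes\gr\Lambda(H^0)\otimes\gr\Lambda(H^+)$ this has nonzero pieces in all degrees up to $\dim H=\dim G$, not just up to $\dim(G/B)$. Already for $\GL_2(\Q_p)$ one has $\dim T=2>1=\dim(G/B)$ and the $(a,b,c)=(0,2,0)$ summand is generically nonzero. So a Shapiro-type collapse to a $\dim(G/B)$-variable polynomial ring cannot happen.

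The root problem is that you have reduced the theorem to vanishing of a single group $\Ext^i_{\Lambda(H)}(E,M)$ for one fixed $H$, whereas $S^i(\pi)=\varinjlim_N\Ext^i_{\Lambda(N)}(E,\pi^\vee)$ is a direct limit over shrinking $N$, and what actually vanishes in the range $\dim(G/B)<i\le\dim(G)$ are the \emph{transition maps}, not the individual Ext groups. This is exactly why the paper's statement (see the discussion right after Theorem \ref{main}) is about the vanishing of the restriction $\Ext^i_{\Lambda(N)}\to\Ext^i_{\Lambda(N^{p^m})}$ for $m\gg 0$. The graded K\"unneth summands with $b>0$ or $c>0$ (the $T$- and $U$-contributions) are nonzero but die under restriction to $N^p$, by the exterior-algebra/Koszul computation; only the $\bar U$-factor survives restriction, and it vanishes outright for $i>\dim\bar U=\dim(G/B)$ by cohomological dimension. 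Relatedly, your appeal to the Iwasawa decomposition $G=BK$ controls $K\backslash G$ as a $B$-set, but for the Mackey restriction of $M$ to $H\subset K$ what matters is the Cartan decomposition $K\backslash G/K\cong S^+/S(\OO)$; the paper factors $M|_K\cong\prod_s\ind_{K^s}^K\check V^s$ and then has to get a vanishing bound for the restriction map that is \emph{uniform in $s$}, which is the other key technical point your proposal does not address.
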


What we actually prove is a slightly stronger result on the vanishing of the transition maps. Namely, if $N \vartriangleleft K$ has an Iwahori factorization and acts trivially on $V$, then the restriction map
$$
\Ext_{\Lambda(N)}^i\big(E, (\ind_K^GV)^{\vee}\big) \longrightarrow \Ext_{\Lambda(N^{p^m})}^i\big(E, (\ind_K^GV)^{\vee}\big)
$$ 
vanishes for $i>\dim(G/B)$ as long as $m$ is greater than some constant depending only on $i$ and $V,N$ (and an auxiliary filtration on $N$). Here $(-)^{\vee}$ denotes Pontryagin duality, and $\Lambda(N)=E[\![N]\!]$ is the completed group algebra over $E$. Note that the {\it{set}} of $p^m$-powers $N^{p^m}$ is a group for $m$ large enough for any $p$-valuable group $N$, cf. \cite[Rem.~26.9]{Sch11}.

We have been unable to show that $S^{\dim(G/B)}(\ind_K^GV)\neq 0$ but we believe this should be true under a suitable regularity condition on the weight $V$, cf. section \ref{nv}. We hope to address this in future work, and to say more about the action of Hecke operators on $S^i(\ind_K^GV)$ for all $i$.
Let us add that the bound $\dim(G/B)$ is not sharp for all $V$. Indeed $S^i(\ind_K^G1)=0$ for all $i>0$, cf. Remark \ref{trivial}.

For $\GL_2(\Q_p)$ Theorem \ref{main} amounts to \cite[Thm.~5.11]{Koh17}, which is one of the main results of that paper. There is a small difference coming from the center 
$Z \simeq \Q_p^{\times}$ though. He assumes $V$ is an irreducible representation of $K=\GL_2(\Z_p)$ which factors through $\GL_2(\F_p)$, extends the central character of $V$ to $Z$ by sending $p \mapsto 1$, and considers $\ind_{KZ}^G V$ instead. The latter carries a natural Hecke operator $T=T_V$ whose cokernel $\pi_V$ is an irreducible supersingular representation. As $V$ varies this gives all supersingular representations of $\GL_2(\Q_p)$ (with $p$ acting trivially), cf. \cite[Prop.~4]{BL94} and \cite[Thm.~1.1]{Bre03}. The short exact sequence
$$
0 \longrightarrow \ind_{KZ}^G V \overset{T}{\longrightarrow} \ind_{KZ}^G V \longrightarrow \pi_V \longrightarrow 0
$$
gives rise to a long exact sequence of higher smooth duals
$$
\cdots \longrightarrow S^i(\pi_V) \longrightarrow S^i(\ind_{KZ}^G V) \overset{S^i(T)}{\longrightarrow} S^i(\ind_{KZ}^G V) \longrightarrow S^{i+1}(\pi_V) \longrightarrow \cdots.
$$
Since $S^i(\ind_{KZ}^G V)=0$ for $i>1$ it follows immediately that $S^3(\pi_V)=S^4(\pi_V)=\cdots=0$. Kohlhaase takes this a step further and shows that also $S^2(\pi_V)=0$ by checking that $S^1(T)$ is bijective (\cite[p.~45]{Koh17}), which is also a key ingredient in his long verification that $S^1(\pi_V)\simeq \pi_{\check{V}}$. The latter is another of his main results, \cite[Thm.~5.13]{Koh17}. Essentially what this shows is that via the mod $p$ local Langlands correspondence $\rho \rightsquigarrow \pi(\rho)$ the dual Galois representation corresponds to $S^1$. More precisely $\pi(\rho^{\vee})\otimes \omega_{\text{cyc}}=S^1\big(\pi(\rho)\big)$ for Galois representations $\rho: \Gal_{\Q_p}\rightarrow \GL_2(E)$ (not a twist of an extension of $\omega_{\text{cyc}}$ by $1$). See the conclusion in \cite[Rem.~5.15]{Koh17}.

Similar descriptions of the supersingular representations exist for other rank one groups. In \cite{Abd14} Abdellatif classifies the supersingulars of 
$\SL_2(\Q_p)$ by decomposing $\pi_V|_{\SL_2(\Q_p)}$ into irreducibles, and in \cite{Koz16} Koziol was able to bootstrap from this case and deal with 
the unramified unitary group $U(1,1)$ over $\Q_p$. As an application of our Theorem \ref{main} there is forthcoming work of Jake Postema for his UC San Diego Ph.D. thesis in which he calculates all the $S^i$ for supersingular representations of both 
$\SL_2(\Q_p)$ and $U(1,1)$, and explores how $S^1$ flips the members of an $L$-packet. 

Let us give a roadmap of our proof of Theorem \ref{main} and then point out some analogies and discrepancies with Kohlhaases's approach. First of all $(\ind_K^GV)^{\vee}$ is the full induction $I_K^G \check{V}$ and we need to understand the restriction maps between the various $\Ext_{\Lambda(N)}^i\big(E, I_K^G \check{V} \big)$ for $N$ deep enough. We start with a normal subgroup $N \vartriangleleft K$ which has an Iwahori factorization (relative to $B$). This means among other things that multiplication defines a homeomorphism
\begin{equation}\label{if}
(N \cap \bar{U}) \times (N \cap T) \times (N \cap U) \overset{\sim}{\longrightarrow} N,
\end{equation}
where $B=TU$. Moreover, conjugation by $s \in S^+$ contracts and expands the rightmost and leftmost factors respectively. Here $S\subset T$ is a maximal split subtorus, and $S^+$ is the monoid of elements $s\in S$ satisfying $|\alpha(s)|\leq 1$ for all roots $\alpha>0$. Essentially by the Cartan decomposition, we get an isomorphism of
$E[K]$-modules
$$
\Ext^i_{\Lambda(N)}(E,I_K^G \check{V}) \overset{\sim}{\longrightarrow} {\prod}_{s\in S^+/S(\OO)} \ind_{NK^s}^K \Ext^i_{\Lambda(N \cap K^s)}(E, \check{V}^s),
$$
compatible with restriction maps on either side. Here $K^s:=K \cap s^{-1}Ks$, and $\check{V}^s$ denotes the representation of $K^s$ on $\check{V}$ obtained by 
$\kappa \ast_s \check{v}:=(s\kappa s^{-1})\check{v}$. It therefore suffices to understand the restriction maps between the various $\Ext^i_{\Lambda(N \cap K^s)}(E, \check{V}^s)$
well enough, and control what happens when we vary $s$. For the sake of exposition let us first outline the argument in the case $s=1$. Here the goal is to show the vanishing of 
$$
\Ext^i_{\Lambda(N)}(E, \check{V}) \longrightarrow \Ext^i_{\Lambda(N')}(E, \check{V})
$$
for $i>\dim(G/B)$ and $N'\subset N$ sufficiently small. To run the argument below for general $s$ it is important {\it{not}} to assume $N$ acts trivially on $V$ here (otherwise the map would vanish for $i>0$ and $N'=N^p$). The point is that $s$ contracts $N \cap U$ but expands $N \cap \bar{U}$. Thus we can safely assume $N \cap U$ acts trivially on $V$, but we do not know this for the factor $N \cap \bar{U}$ (after conjugating by $s$). 

The basic idea is to prove the analogous vanishing result for certain graded algebras $\gr \Lambda(N)$ and then employ a spectral sequence argument elaborated upon in Appendix A. This is based on Lazard theory. There is a natural way to endow $N$ with a $p$-valuation $\omega$ which behaves well with respect to the Iwahori factorization, and although (\ref{if}) is not an isomorphism of groups it does become one once we pass to graded groups. Consequently, there is an isomorphism
of graded algebras
$$
\gr \Lambda(N \cap \bar{U})\otimes \gr \Lambda(N \cap T) \otimes \gr \Lambda(N \cap U) \overset{\sim}{\longrightarrow} \gr \Lambda(N),
$$
and one obtains a K\"{u}nneth formula for $\Ext^i_{\gr \Lambda(N)}(E, \gr \check{V})$ expressing it as a direct sum
$$
{\bigoplus}_{i_1+i_2+i_3=i} \Ext^{i_1}_{\gr \Lambda(N\cap \bar{U})}(E, \gr \check{V})\otimes \Ext^{i_2}_{\gr \Lambda(N \cap T)}(E,E)\otimes  \Ext^{i_3}_{\gr \Lambda(N \cap U)}(E,E).
$$
The $(i_2,i_3)$-factors can be dealt with using the Koszul resolution, which shows for instance that
$$
 \Ext^{i_2}_{\gr \Lambda(N \cap T)}(E,E)\simeq \bigwedge^{i_2} \big(E \otimes_{\F_p[\pi]} \gr \Lambda(N \cap T)\big)^*
 $$
 where $\pi$ is the usual $p$-power operator from Lazard theory. We have tacitly perturbed $\omega$ if necessary to make the Lie algebra $\gr N$ abelian. The restriction map $\Ext^i_{\gr \Lambda(N)}(E, \gr \check{V})\rightarrow \Ext^i_{\gr \Lambda(N^p)}(E, \gr \check{V})$ therefore vanishes on the $(i_1,i_2,i_3)$-summand unless $i_2=i_3=0$.
So we only need to worry about the summand with $i_1=i$. But the factor $\Ext^{i}_{\gr \Lambda(N\cap \bar{U})}(E, \gr \check{V})$ itself vanishes in the range $i>\dim(\bar{U})$ by well-known results on the cohomological dimension. This is where the bound $\dim(G/B)=\dim(\bar{U})$ comes from. Altogether this shows that
$$
\Ext^i_{\gr \Lambda(N)}(E, \gr \check{V})\overset{0}{\longrightarrow} \Ext^i_{\gr \Lambda(N^p)}(E, \gr \check{V}) \y \y \y \forall i>\dim(G/B).
$$
Since $\Lambda(N)$ is a filtered algebra, its bar resolution inherits a filtration which results in a convergent spectral sequence (cf. Appendix A where this is explained in detail)
which is functorial in $N$,
$$
E_1^{i,j}=\Ext^{i,j}_{\gr \Lambda(N)}(E, \gr \check{V}) \Longrightarrow \Ext^{i+j}_{\Lambda(N)}(E, \check{V}).
$$
One deduces that the restriction map $\Ext^i_{\Lambda(N)}(E, \check{V}) \rightarrow \Ext^i_{\Lambda(N^p)}(E, \check{V})$ is zero on the graded pieces, that is it takes $\Fil^j$ to 
$\Fil^{j+1}$ for all $j$. This filtration is both exhaustive and separated, so doing this repeatedly shows that $\Ext^i_{\Lambda(N)}(E, \check{V}) \rightarrow \Ext^i_{\Lambda(N^{p^m})}(E, \check{V})$ vanishes for large enough $m$. In fact we can control how large (and this is crucial when we vary $s$). We may assume $\gr \Lambda(N)$ is Koszul (even a polynomial algebra) in which case we understand the internal grading $\Ext^{i,j}_{\gr \Lambda(N)}$ better and it suffices to take $m>\mu+i$ where $\mu$ is the first index for which 
$\Fil^{\mu}\Lambda(N)\check{V}=0$. We end with a few remarks on the case of arbitrary $s \in S^+$. The argument above still works, and shows that
$$
\Ext^i_{\Lambda(N \cap K^s)}(E, \check{V}^s)\overset{0}{\longrightarrow} \Ext^i_{\Lambda(N^{p^m}\cap K^s)}(E, \check{V}^s) \y \y \y \forall i>\dim(G/B)
$$
as long as $m>\mu +i$. It is of course extremely important that our invariant $\mu$ is independent of $s$. Getting this lower bound on $m$ which is uniform in $s \in S^+$ is indeed a key point of the whole paper.

To guide the reader we now point out some similarities and differences between our paper and \cite{Koh17}. Overall Kohlhaase argues more from the point of view of 
\cite{DdSMS} whereas we lean more towards \cite{Laz65}. His groups $G_n$ are analogous to our $K^s$ (the difference being the center $Z$). On the other hand, where we choose to work with $N^{p^m}$ he is considering the full congruence subgroups $K_m=\ker\big(\GL_2(\Z_p)\rightarrow \GL_2(\Z/p^m\Z)\big)$. The first few formal steps, using the Cartan and Mackey decompositions, are identical. In Kohlhaase the problem is reduced to understanding the restriction maps between $\Ext_{\Lambda(\mathcal{U}_{m,n})}^i(E, \check{V})$
where $\mathcal{U}_{m,n}=\alpha^{-n}(G_n \cap K_m)\alpha^n$ corresponds to our $K \cap sN^{p^m}s^{-1}$. The key point where Kohlhaase uses he is in the $\GL_2(\Q_p)$ situation is to show that these groups $\mathcal{U}_{m,n}$ are uniform pro-$p$ groups (certain finitely generated torsion-free groups $\mathcal{U}$ for which 
$[\mathcal{U},\mathcal{U}]\subset \mathcal{U}^p$ -- at least for $p>2$). Based on the Iwahori factorization of $\mathcal{U}_{m,n}$ this becomes case-by-case explicit matrix computations exhibiting various commutators as $p$-powers (cf. \cite[p.~40]{Koh17}). One of the goals of our paper was to do this is a more conceptual way using Lazard's theory of $p$-valuations. Knowing $\mathcal{U}_{m,n}$ is uniform (for $m \geq 2$) Kohlhaase invokes \cite[Thm.~7.24]{DdSMS} to get an isomorphism
$\gr \Lambda(\mathcal{U}_{m,n})\simeq E[X_1,X_2,X_3,X_4]$, where one puts the $\m$-adic filtration on $\Lambda(\mathcal{U}_{m,n})$. This isomorphism is also a fundamental result in Lazard theory where more generally one identifies $\gr \Lambda(\mathcal{U})$ with $U(\gr \mathcal{U})$ for any $p$-valued group $\mathcal{U}$. At this point our calculations on the "graded side" more or less agree. However, to pass from the graded situation and deduce vanishing of the restriction maps for $\Lambda(\mathcal{U}_{m,n})$ itself Kohlhaase employs a result of Gr\"{u}nenfelder (\cite{Gru79}) to the effect that there are functorial isomorphisms
\begin{equation}\label{gru}
\Ext_{\Lambda(\mathcal{U}_{m,n})}^i(E, \check{V}) \overset{\sim}{\longrightarrow} \Ext_{\gr \Lambda(\mathcal{U}_{m,n})}^i(E, \gr \check{V})
\end{equation}
dual to an analogue for $\Tor_i$ which supposedly is \cite[Thm.~3.3']{Gru79}. We have not been able to understand the details of Gr\"{u}nenfelder's paper, and we have been unsuccessful\footnote{In fact the first paragraph of \cite[Sect.~3.4]{PT02} indicates there are counterexamples to \cite[Thm.~3.3']{Gru79}.}
in establishing the isomorphism (\ref{gru}) directly for non-trivial weights $V$; even for $i=0$. Instead we use a spectral sequence similar to the one used in \cite[p.~72]{Bjo87} of the form 
$$
E_1^{i,j}=\Ext_{\gr \Lambda(\mathcal{U})}^{i,j}(E,\gr \check{V}) \Longrightarrow \Ext_{\Lambda(\mathcal{U})}^{i+j}(E,\check{V}).
$$
However, \cite{Bjo87} does not contain quite what we need. To get a spectral sequence which is functorial in $\mathcal{U}$ we have to work with bar resolutions instead of the resolutions arising from "good" filtrations as in \cite{Bjo87}. This spectral sequence and its applications are discussed at great length in Appendix A of our paper, which also treats the Koszul case where the sequence simplifies due to the vanishing of most $E_1^{i,j}$. 

\section{Notation}\label{not}

Let $p$ be a prime. Fix a finite extension $F/\Q_p$ with ring of integers $\OO$. Choose a uniformizer $\varpi$ and let $k=\OO/\varpi \OO$ be the residue field, with cardinality
$q=p^f$. We normalize the absolute value $|\cdot|$ on $F$ such that $|\varpi|=q^{-1}$. Thus $|p|=q^{-e}$ where $e$ is the ramification index. 

We take $G_{/F}$ to be a connected reductive $F$-group, assumed to be unramified. The latter means $G$ is quasi-split (over $F$) and $G \times_F F'$ is split for some 
finite unramified extension $F'/F$. Such groups admit integral models, cf. \cite[3.8]{Tit79}. More precisely $G$ extends to a smooth affine group scheme $G_{/\OO}$ for which 
$G \times_{\OO} k$ is connected and reductive. We fix such a model once and for all and let $K=G(\OO)$ be the corresponding hyperspecial maximal compact subgroup of $G(F)$. 
When there is no risk of confusion we will just write $G$ instead of $G(F)$; similarly for any linear algebraic $F$-group.

Choose a maximal $F$-split subtorus $S \subset G$ and let $T=Z_G(S)$ which is then a maximal $F$-subtorus. They extend naturally to smooth $\OO$-subgroup schemes $S\subset T$ of $G$ (which reduce to a maximal $k$-split torus and its centralizer in the special fiber). Let $\Phi=\Phi(G,S)\subset X^*(S)$ be the relative root system, which we interchangeably view in the generic or special fiber. Once and for all we choose a system of positive roots $\Phi^+$ and let $B=T \ltimes U$ be the corresponding Borel subgroup, which also extends to a closed $\OO$-subgroup scheme of $G_{/\OO}$. We let $\bar{B}=T \ltimes \bar{U}$ denote the opposite Borel subgroup ($B \cap \bar{B}=T$).  


\section{The dual of compact induction}\label{ind}

Throughout the paper we let $E/\F_p$ be a fixed algebraic extension which will serve as the coefficient field of our representations. We start with a finite-dimensional smooth $E$-representation $V$ of $K=G(\OO)$. Note that $K_r=\ker\big( G(\OO)\rightarrow G(\OO/\varpi^r \OO)\big)$ necessarily acts trivially for $r$ sufficiently large. Let $r_V$ be the smallest 
$r\geq 1$ for which $K \rightarrow \GL(V)$ factors through $K/K_r$. In applications $V$ will typically be a Serre weight, meaning it arises from an absolutely irreducible rational  representation of $G \times_{\OO} E$ (via some choice of embedding $k \hookrightarrow E$) but this restriction is unnecessary. 

Let $\check{V}=\Hom_E(V,E)$ be the full $E$-linear dual with the contragredient $K$-action. This is again a smooth representation, and obviously $r_{\check{V}}=r_V$. 
We let $\langle,\rangle_V$ denote the natural pairing $\check{V}\times V \rightarrow E$. 

Our conventions on induced representations differ from \cite[p.~38]{Koh17}. We let $G$ act by right translations, whereas Kohlhaase lets $G$ act by left translations. 

\begin{defn}
$I_K^GV$ is the full induction of \underline{all} functions $f: G \rightarrow V$ such that $f(\kappa g)=\kappa f(g)$ for all $\kappa \in K$ and $g\in G$.
We let $\Ind_K^GV$ be the subrepresentation of smooth functions $f$ (i.e., invariant under right-translation by an open subgroup). Finally $\ind_K^G V\subset \Ind_K^GV$ 
consists of smooth functions $f$ which are compactly supported (so $\text{supp}(f)$ is the union of finitely many cosets $Kg$). 
\end{defn}

\begin{rem}
Upon choosing a set of coset representatives $(g_i)_{i \in I}$ for $K \backslash G$, the map $f \mapsto (f(g_i))_{i \in I}$ defines isomorphisms of $E$-vector spaces 
$$
I_K^GV \overset{\sim}{\longrightarrow} \prod_{i \in I} V \y \y \y \text{\underline{and}} \y \y \y \ind_K^G V \overset{\sim}{\longrightarrow} \bigoplus_{i \in I} V.
$$
\end{rem}

Pontryagin duality $(\cdot)^{\vee}$ sets up an anti-equivalence between the category of smooth representations $\Rep_E^{\infty}(G)$ and the category $\Mod_{\Lambda(G)}^{\text{pc}}$ 
of pseudocompact $E$-vector spaces $M$ with a jointly continuous action $G \times M \rightarrow M$, cf. \cite[Thm.~1.5]{Koh17}. The $E[G]$-module structure of any such $M$ extends to a $\Lambda(G)$-module structure, where $\Lambda(G):=E[G]\otimes_{E[K]} E[\![K]\!]$ is the Iwasawa algebra. Here $E[\![K]\!]=\varprojlim_N E[K/N]$ is the completed group algebra, with $N \vartriangleleft K$ ranging over the open normal subgroups.

The compact induction $\ind_K^G V$ is a (non-admissible) object of $\Rep_E^{\infty}(G)$. We first identify its Pontryagin dual with $I_K^G \check{V}$. 
This is \cite[Lem.~5.9]{Koh17}; we include the argument for convenience. 

\begin{lem}
There is an isomorphism $I_K^G \check{V} \overset{\sim}{\longrightarrow} (\ind_K^G V)^{\vee}$ of pseudocompact $\Lambda(G)$-modules.
\end{lem}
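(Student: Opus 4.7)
The plan is to write down an explicit pairing between $I_K^G\check V$ and $\ind_K^GV$ and verify it induces the desired isomorphism. Define
$$
\Phi: I_K^G \check V \longrightarrow (\ind_K^G V)^{\vee}, \qquad \Phi(\phi)(f) := \sum_{Kg \in K\backslash G} \la \phi(g), f(g)\ra_V.
$$
The sum is finite because $f$ is compactly supported, and the summand is well-defined on the coset $Kg$ since $\phi(\kappa g)=\kappa\phi(g)$, $f(\kappa g)=\kappa f(g)$ and $\la\cdot,\cdot\ra_V$ is $K$-invariant. $E$-linearity of $\Phi$ is immediate.

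Next I would check $G$-equivariance, matching right-translation on $I_K^G\check V$ with the contragredient action on $(\ind_K^GV)^{\vee}$. For $g_0 \in G$, substituting $g\mapsto gg_0^{-1}$ in the definition gives
$$
\Phi(g_0\phi)(f)=\sum_{Kg}\la\phi(gg_0),f(g)\ra_V = \sum_{Kg}\la \phi(g),f(gg_0^{-1})\ra_V = \Phi(\phi)(g_0^{-1}f),
$$
which is exactly the contragredient action of $g_0$ applied to $\Phi(\phi)$. Thus $\Phi$ is $E[G]$-linear, and this will extend to $\Lambda(G)$-linearity once continuity is in hand, by density of $E[G]$ in $\Lambda(G)$.

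Finally I would conclude that $\Phi$ is an isomorphism of pseudocompact $E$-vector spaces. Fix coset representatives $(g_i)_{i \in I}$ for $K\backslash G$ as in the Remark; under the resulting identifications $\ind_K^GV\simeq \bigoplus_i V$ and $I_K^G\check V\simeq \prod_i\check V$, the map $\Phi$ becomes the tautological pairing $\bigl((\check v_i),(v_i)\bigr)\mapsto \sum_i \la\check v_i,v_i\ra_V$, i.e.\ precisely the standard identification of the Pontryagin dual of a discrete direct sum of finite-dimensional $E$-vector spaces with the product of their duals. This same identification matches the topologies, as the pseudocompact topology on $I_K^G\check V$ is the product of the discrete topologies on the factors $\check V$. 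The only real subtlety is keeping track of the conventions on how $G$ and $\Lambda(G)$ act on each side; the combinatorial substance of the lemma reduces to the elementary observation that $(\bigoplus_i V)^{\vee}\simeq \prod_i\check V$.
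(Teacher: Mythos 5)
Your proof is correct and follows essentially the same route as the paper: you define the same pairing $\langle \phi,f\rangle=\sum_{K\backslash G}\langle\phi(g),f(g)\rangle_V$, check it is $G$-equivariant, and then reduce to the elementary identification $\prod_i\check V \simeq \bigl(\bigoplus_i V\bigr)^{\vee}$ after a choice of coset representatives. The extra details you spell out (the explicit $G$-equivariance computation and the remark that the pseudocompact/product topologies match) are exactly what the paper leaves implicit.
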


\begin{proof}
We define a $G$-equivariant pairing $I_K^G \check{V} \times \ind_K^G V \rightarrow E$ by the following (finite) sum
$$
\langle \check{f},f\rangle:={\sum}_{g\in K \backslash G} \langle \check{f}(g),f(g)\rangle_V.
$$
The resulting map $I_K^G \check{V} \rightarrow (\ind_K^G V)^{\vee}$ corresponds to the natural map $\prod_{i\in I}\check{V} \rightarrow (\bigoplus_{i \in I} V)^{\vee}$ after choosing coset representatives $(g_i)_{i \in I}$. The latter is trivially an isomorphism (of $E$-vector spaces).
\end{proof}

Consequently, the compact induction $\ind_K^G V$ has smooth dual the smooth induction of $\check{V}$,
$$
S^0(\ind_K^G V) \simeq \Ind_K^G \check{V}.
$$
The ultimate goal of this paper is to get a better understanding of the higher smooth duals $S^i(\ind_K^G V)$, 
cf. \cite[Def.~3.12]{Koh17}. In particular, for which $i>0$ do they vanish? For $G=\GL_2(\Q_p)$ Kohlhaase has shown that 
$S^i(\ind_K^G V)=0$ for $i>1$, cf. \cite[Thm.~5.11]{Koh17}. We extend his result to arbitrary $G$. 


\section{Stable cohomology and the functors $S^i$}

We fix an open $N \vartriangleleft K$ for now and let it act trivially on $E$, which we view as an object of $\Mod_{\Lambda(N)}^{\text{pc}}$. The module structure on $E$ comes from the augmentation map $\epsilon: \Lambda(N)=E[\![N]\!] \rightarrow E$. For other objects $M$ of $\Mod_{\Lambda(N)}^{\text{pc}}$ we will explore how $\Ext^i_{\Lambda(N)}(E,M)$ varies when we shrink $N$. The subscript in $\Ext^i_{\Lambda(N)}$ signifies we take extensions in the abelian category $\Mod_{\Lambda(N)}^{\text{pc}}$ (not the category of abstract 
left $\Lambda(N)$-modules). Eventually we will be focussing on $M=I_K^G \check{V}$ (which is not coadmissible). 

To make the functorial properties of $\Ext^i_{\Lambda(N)}(E,M)$ more transparent we prefer to fix a concrete projective resolution of $E$ which itself is functorial in $N$. We take the topological bar resolution $B_{\bullet} \Lambda(N)$, cf. \cite[V(1.2.9)]{Laz65}. Recall that
$$
B_n \Lambda(N)=\Lambda(N)^{\hat{\otimes}(n+1)}=\Lambda(N) \hat{\otimes}_E \cdots \hat{\otimes}_E \Lambda(N)\simeq \Lambda(N\times \cdots \times N)
$$
is a projective object of $\Mod_{\Lambda(N)}^{\text{pc}}$ by \cite[Cor.~3.3]{Bru66}, and together they form a resolution
$$
\cdots \overset{d_3}{\longrightarrow} B_2 \Lambda(N)\overset{d_2}{\longrightarrow} B_1\Lambda(N) \overset{d_1}{\longrightarrow}  \Lambda(N) \overset{\epsilon}{\longrightarrow} E \longrightarrow 0
$$
whose differentials $d_n$ are defined by a standard formula we will not state here. Note that $K$ acts on $B_{\bullet} \Lambda(N)$ as follows. For each element $\kappa \in K$ consider the conjugation map $\kappa(\cdot)\kappa^{-1}$ on $N$. It induces an automorphism $\kappa: \Lambda(N) \overset{\sim}{\longrightarrow} \Lambda(N)$ and in turn an automorphism of each $B_n \Lambda(N)$ which we henceforth view as an $E[K]$-module this way. 

\begin{notn}
When we write $R\Hom_{\Lambda(N)}(E,M)$ and $\Ext^i_{\Lambda(N)}(E,M)$ we will always mean those obtained from the bar resolution $B_{\bullet} \Lambda(N)$. I.e.,
$$
R\Hom_{\Lambda(N)}(E,M):=\Hom_{\Lambda(N)}(B_{\bullet} \Lambda(N),M), \y \y \y \y \Ext^i_{\Lambda(N)}(E,M):=h^i\big(\Hom_{\Lambda(N)}(B_{\bullet} \Lambda(N),M)\big).
$$
Note that if $M$ carries an $E[K]$-module structure, then $R\Hom_{\Lambda(N)}(E,M)$ becomes a complex of $E[K/N]$-modules
$$
 0 \longrightarrow \Hom_{\Lambda(N)}(\Lambda(N),M) \longrightarrow \Hom_{\Lambda(N)}(B_1\Lambda(N),M) \longrightarrow \Hom_{\Lambda(N)}(B_2\Lambda(N),M) \longrightarrow 
 \cdots
$$
(and each $\Ext^i_{\Lambda(N)}(E,M)$ is naturally an $E[K/N]$-module). Perhaps it is worthwhile to really flesh out the $E[K/N]$-module structure of 
$\Hom_{\Lambda(N)}(B_{n} \Lambda(N),M)$ here. Since $N \vartriangleleft K$ there is a $K$-action on $\Lambda(N)$ an therefore also on $B_{n} \Lambda(N)$ induces by conjugation 
$n \mapsto \kappa n\kappa^{-1}$ on $N$. We let $\kappa \in K$ act on an element $\phi \in \Hom_{\Lambda(N)}(B_{n} \Lambda(N),M)$ in the usual way $(\kappa\phi)(x)=\kappa\phi(\kappa^{-1}x)$. It is trivial to check that $\kappa\phi$ is again $\Lambda(N)$-linear, and clearly $\kappa\phi=\phi$ for $\kappa \in N$ -- thus the $K$-action factors through $K/N$.
\end{notn}

We will now discuss how these vary when we shrink $N$. So let $N'\subset N$ be an open subgroup. The induced map $\Lambda(N') \hookrightarrow \Lambda(N)$ is a homeomorphism onto its image and $\Lambda(N)$ becomes a free module over $\Lambda(N')$ of rank $[N:N']$, see \cite[Cor.~19.4]{Sch11} for instance. In particular, viewing 
$B_{\bullet} \Lambda(N)$ as a complex of $\Lambda(N')$-modules still gives a projective resolution of $E$. 

\begin{lem}\label{he}
The natural map $B_{\bullet} \Lambda(N')\longrightarrow B_{\bullet} \Lambda(N)$ is a homotopy equivalence.
\end{lem}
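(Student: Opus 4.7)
The plan is to invoke the standard comparison theorem for projective resolutions in the abelian category $\Mod_{\Lambda(N')}^{\text{pc}}$: any chain map between two projective resolutions of $E$ lifting the identity on $E$ is automatically a homotopy equivalence. It therefore suffices to verify (i) that, viewed via the inclusion $\Lambda(N') \hookrightarrow \Lambda(N)$, the complex $B_\bullet \Lambda(N) \to E$ remains a projective resolution of $E$ over $\Lambda(N')$, and (ii) that the natural map $B_\bullet \Lambda(N') \to B_\bullet \Lambda(N)$ is a $\Lambda(N')$-linear chain map lifting the identity on $E$.

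For (i), the key input is the fact recalled just before the lemma: $\Lambda(N)$ is free of rank $[N:N']$ as a pseudocompact $\Lambda(N')$-module. Each $B_n \Lambda(N) = \Lambda(N)^{\hat{\otimes}(n+1)}$ is projective over $\Lambda(N)$ by \cite[Cor.~3.3]{Bru66}; since projectivity is preserved under restriction of scalars along a free ring extension, $B_n \Lambda(N)$ is also projective in $\Mod_{\Lambda(N')}^{\text{pc}}$. Exactness of the augmented bar complex passes through the forgetful functor to pseudocompact $E$-vector spaces (and back to pseudocompact $\Lambda(N')$-modules), so $B_\bullet \Lambda(N) \to E$ is genuinely a $\Lambda(N')$-projective resolution of $E$.

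For (ii), the inclusion $\Lambda(N') \hookrightarrow \Lambda(N)$ is $\Lambda(N')$-linear and respects the augmentations (both are induced by sending each group element to $1 \in E$), so its $(n+1)$-fold completed tensor powers assemble into a $\Lambda(N')$-linear map $B_\bullet \Lambda(N') \to B_\bullet \Lambda(N)$ compatible with augmentations. The bar differentials are defined by intrinsic formulas in the underlying group, which $N' \hookrightarrow N$ preserves, so the assembled maps form a chain map. The comparison theorem then furnishes the desired homotopy equivalence. I do not foresee a serious obstacle here: the only subtlety is to stay consistently inside the pseudocompact category $\Mod_{\Lambda(N')}^{\text{pc}}$, where projectivity and the comparison of projective resolutions are standard.
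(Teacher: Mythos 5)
Your proposal is correct and follows essentially the same route as the paper: both arguments reduce to the standard comparison theorem for projective resolutions in $\Mod_{\Lambda(N')}^{\text{pc}}$, using the observation made just before the lemma that $\Lambda(N)$ is free of finite rank over $\Lambda(N')$ (so $B_\bullet\Lambda(N)$ remains a projective resolution of $E$ over $\Lambda(N')$). You simply spell out in more detail the two points the paper summarizes as ``as observed'' and the citation to \cite[Thm.~3, p.~141]{GM03}.
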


\begin{proof}
Basic homological algebra. As observed, both $B_{\bullet} \Lambda(N')$ and $B_{\bullet} \Lambda(N)$ are projective resolutions of $E$ (viewed as a $\Lambda(N')$-module).
Therefore {\it{any}} map between the resolutions $B_{\bullet} \Lambda(N')\rightarrow B_{\bullet} \Lambda(N)$ must be a homotopy equivalence. Indeed any extension to either resolution
of the identity map $E \rightarrow E$ is homotopic to the identity map of complexes, cf. \cite[Thm. 3, p.~141]{GM03}.
\end{proof}

As a result, there is a natural restriction map $\res_{N,N'}: R\Hom_{\Lambda(N)}(E,M) \rightarrow R\Hom_{\Lambda(N')}(E,M)$ defined as the composition
$$
\Hom_{\Lambda(N)}(B_{\bullet} \Lambda(N),M) \overset{\text{inc}}\longrightarrow \Hom_{\Lambda(N')}(B_{\bullet} \Lambda(N),M) \overset{\text{qis}}{\longrightarrow} \Hom_{\Lambda(N')}(B_{\bullet} \Lambda(N'),M).
$$
The first map is just the inclusion of a subcomplex; the second is the quasi-isomorphism (in fact homotopy equivalence) obtained by composing with the map of Lemma \ref{he}.
Taking cohomology $h^i$ yields restriction maps
$$
\res_{N,N'}^i: \Ext^i_{\Lambda(N)}(E,M) \longrightarrow \Ext^i_{\Lambda(N')}(E,M).
$$
These are $K$-equivariant if $M$ is an $E[K]$-module and $N, N' \vartriangleleft K$. As $N$ varies we get a direct system, and passing to the limit gives the stable cohomology groups of $M$, cf. \cite[p.~18]{Koh17}.

\begin{defn}
We introduce the complex $R\Sigma(M):=\varinjlim_N R\Hom_{\Lambda(N)}(E,M)$ and denote its cohomology groups by
$$
\Sigma^i(M):=h^i\big(R\Sigma(M)\big)=\varinjlim_N \Ext^i_{\Lambda(N)}(E,M).
$$
(Note that $\varinjlim_N$ is exact so it commutes with $h^i$.) The limit is over all open subgroups $N \subset G$.
\end{defn}

Suppose $M$ is an object of $\Mod_{\Lambda(G)}^{\text{pc}}$ (so it carries a $G$-action). For each element $g \in G$ the conjugation map
$g(\cdot)g^{-1}$ induces an isomorphism $\Lambda(N)\overset{\sim}{\longrightarrow} \Lambda(gNg^{-1})$. The induced map between the bar resolutions (together with the automorphism $g: M \rightarrow M$) then gives an isomorphism 
$$
R\Hom_{\Lambda(N)}(E,M) \overset{\sim}{\longrightarrow} R\Hom_{\Lambda(gNg^{-1})}(E,M)
$$
which is the identity if $g \in N$. This way $R\Sigma(M)$ becomes a complex of smooth $E[G]$-modules and the $\Sigma^i(\cdot)$
form a $\delta$-functor $\Mod_{\Lambda(G)}^{\text{pc}} \rightarrow \Rep_E^{\infty}(G)$. Note that $\Sigma^0$ is just the functor $M \mapsto M^{\infty}=\bigcup_N M^N$
which associates the subspace of smooth vectors.

Following \cite[Def.~3.12]{Koh17} we define the higher smooth duals $S^i: \Rep_E^{\infty}(G)\rightarrow \Rep_E^{\infty}(G)$ by composing the $\Sigma^i$ with Pontryagin duality $(\cdot)^{\vee}$. 

\begin{defn}
We introduce the complex $S(\pi):=\varinjlim_N R\Hom_{\Lambda(N)}(E,\pi^{\vee})$ and denote its cohomology groups by
$$
S^i(\pi):=h^i\big(S(\pi)\big)=\varinjlim_N \Ext^i_{\Lambda(N)}(E,\pi^{\vee}).
$$
Note that $S^0(\pi)=\Hom_E(\pi,E)^{\infty}$ is the smooth dual of $\pi$ and the $S^i$ form a $\delta$-functor.  
\end{defn}


\section{First reductions}\label{first}

We now specialize to the case $\pi=\ind_K^G V$. Our goal is to make the complex $R\Hom_{\Lambda(N)}(E,I_K^G \check{V})$ more explicit and understand the restriction maps as we shrink $N$. For now we fix an open $N \vartriangleleft K$.

First we decompose $I_K^G \check{V}$ as a $K$-representation using the Cartan decomposition: Let $X_*(S)^+$ be the set of dominant coweights (relative to $B$). That is, the set of cocharacters $\mu \in X_*(S)$ such that $\langle \mu,\alpha \rangle\geq 0$ for all $\alpha \in \Phi^+$. The map $\mu \mapsto \mu(\varpi)$ gives an isomorphism 
$X_*(S) \overset{\sim}{\longrightarrow} S/S(\OO)$ which clearly restricts to a bijection $X_*(S)^+ \overset{\sim}{\longrightarrow} S^+/S(\OO)$ where $S^+$ denotes the monoid
$$
S^+=\{s\in S: |\alpha(s)|\leq 1 \y \forall \alpha \in \Phi^+\}.
$$
The Cartan decomposition reads $G=\bigsqcup_s KsK$ with $s\in S^+$ running over a fixed set of representatives for $S^+/S(\OO)$. (To fix ideas take $s=\mu(\varpi)$ for a choice of uniformizer $\varpi$ and let $\mu \in X_*(S)^+$ vary.)

\begin{defn}
Let $K^s:=K \cap s^{-1}Ks$ act on $\check{V}$ by the rule $\kappa \ast_s \check{v}:=(s\kappa s^{-1})\check{v}$. We denote the resulting representation of $K^s$ by $\check{V}^s$. 
\end{defn}

With this notation we have the following Mackey factorization of $I_K^G \check{V}$.

\begin{lem}
There is an isomorphism of $E[K]$-modules $I_K^G \check{V} \overset{\sim}{\longrightarrow} \prod_s \ind_{K^s}^K \check{V}^s$ given by $f \mapsto (f_s)$ where 
the function $f_s$ is defined by the formula $f_s(\kappa):=f(s\kappa)$.
\end{lem}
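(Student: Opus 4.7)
The plan is to start from the Cartan decomposition $G=\bigsqcup_s KsK$ and refine each double coset into right $K$-cosets. Since $s^{-1}Ks$ is a compact open subgroup of $G$, the intersection $K^s=K\cap s^{-1}Ks$ is open in the compact group $K$ and therefore has finite index. The double coset decomposes as $KsK=\bigsqcup_{\bar\kappa\in K^s\backslash K} Ks\kappa$, because $Ks\kappa=Ks\kappa'$ is equivalent to $\kappa(\kappa')^{-1}\in s^{-1}Ks\cap K=K^s$. Thus a set of right-coset representatives for $K\backslash G$ is given by $\{s\kappa\}$ as $s$ ranges over $S^+/S(\OO)$ and $\kappa$ ranges over $K^s\backslash K$, so as a vector space we already have
$$
I_K^G\check V\;\overset\sim\longrightarrow\;\prod_s\prod_{K^s\backslash K}\check V,\qquad f\longmapsto\bigl(f(s\kappa)\bigr)_{s,\kappa}.
$$

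Next, I would verify that for each $s$ the map $f_s(\kappa):=f(s\kappa)$ lands in $\ind_{K^s}^K\check V^s$. Smoothness is automatic since $f$ is fixed by a small compact open subgroup, and compact support is a non-issue because $K$ itself is compact (so $\ind_{K^s}^K=\Ind_{K^s}^K=I_{K^s}^K$ is just finite-dimensional). The only substantive point is the $K^s$-equivariance: for $\kappa_0\in K^s$ the element $s\kappa_0 s^{-1}$ lies in $K$ by definition, so
$$
f_s(\kappa_0\kappa)=f(s\kappa_0\kappa)=f\bigl((s\kappa_0 s^{-1})\cdot s\kappa\bigr)=(s\kappa_0 s^{-1})f(s\kappa)=\kappa_0\ast_s f_s(\kappa),
$$
which is exactly the transformation rule defining $\ind_{K^s}^K\check V^s$. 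This is where the twisted action $\ast_s$ enters in a decisive way, and is the one place the argument is not purely formal.

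For the inverse, I would take a tuple $(f_s)_s$ with $f_s\in\ind_{K^s}^K\check V^s$ and try to define $f$ on $KsK$ by $f(\kappa' s\kappa):=\kappa' f_s(\kappa)$. Well-definedness reduces to the observation already used above: if $\kappa' s\kappa_1=s\kappa_2$ with $\kappa',\kappa_1,\kappa_2\in K$, then $\kappa_1\kappa_2^{-1}\in K^s$ and $\kappa'=s(\kappa_1\kappa_2^{-1})s^{-1}$, so the required identity $f_s(\kappa_1)=\kappa' f_s(\kappa_2)$ is precisely the $K^s$-equivariance of $f_s$ applied to $\kappa_0=\kappa_1\kappa_2^{-1}$. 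Extending by $0$ on the remaining double cosets recovers an element of $I_K^G\check V$, and the two constructions are visibly mutual inverses.

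Finally I would check $K$-equivariance for the right-translation action. For $\kappa_0\in K$ and $(f\cdot\kappa_0)(g):=f(g\kappa_0)$ one has $(f\cdot\kappa_0)_s(\kappa)=f(s\kappa\kappa_0)=f_s(\kappa\kappa_0)=(f_s\cdot\kappa_0)(\kappa)$, so the isomorphism intertwines the $K$-actions factor by factor. The main obstacle is not an obstacle so much as a bookkeeping step: one must see that the definition of $\check V^s$ via $\kappa\ast_s\check v=(s\kappa s^{-1})\check v$ is rigged exactly so that compatibility of $f$ with left $K$-translation on $G$ translates into $K^s$-equivariance of $f_s$ after the substitution $g\mapsto s\kappa$.
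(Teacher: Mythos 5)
Your proposal fills in exactly the Mackey/Cartan argument that the paper declares ``trivial to check'' and delegates to Kohlhaase, so the approach is the same; the refinement $KsK=\bigsqcup_{\kappa\in K^s\backslash K}Ks\kappa$, the verification that $f_s$ satisfies the $\ast_s$-twisted transformation law, the explicit inverse, and the right-translation equivariance are all correct and complete. One small slip in the well-definedness check: from $\kappa's\kappa_1=s\kappa_2$ one gets $\kappa'=s(\kappa_2\kappa_1^{-1})s^{-1}$ (not $s(\kappa_1\kappa_2^{-1})s^{-1}$), and the identity to verify is $\kappa'f_s(\kappa_1)=f_s(\kappa_2)$ (not $f_s(\kappa_1)=\kappa'f_s(\kappa_2)$); the two inversions cancel in your chain of reasoning, so the logic still closes, but the displayed formulas should be corrected.
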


\begin{proof}
This is trivial to check; see \cite[p.~39]{Koh17} but beware that our conventions on induced representations are different.
\end{proof}

Consequently,
$$
R\Hom_{\Lambda(N)}(E,I_K^G \check{V}) \overset{\sim}{\longrightarrow} \prod_s R\Hom_{\Lambda(N)}(E,\ind_{K^s}^K \check{V}^s).
$$
Let us fix an $s \in S^+$ and work out the $s$-th factor above. We consider the compact open subgroup $NK^s$ of $K$ (noting that $K^s\subset K$ normalizes $N$)
and do the induction in stages as $\ind_{NK^s}^K \circ \ind_{K^s}^{NK^s}$ by transitivity. This allows us to write
$$
R\Hom_{\Lambda(N)}(E,\ind_{K^s}^K \check{V}^s)=\ind_{NK^s}^K R\Hom_{\Lambda(N)}(E, \ind_{K^s}^{NK^s}\check{V}^s),
$$
where the induction on the right-hand side means we are inducing each term of the complex. Here we have used the following simple observation (with 
$\mathcal{U}=B_{\bullet}\Lambda(N)$ and $\mathcal{V}=\ind_{K^s}^{NK^s}\check{V}^s$):

\begin{lem}
Suppose $\mathcal{V}$ is a $\Lambda(H)$-module where $H \subset K$ is an open subgroup containing $N \vartriangleleft K$. Let $\mathcal{U}$ be a $\Lambda(N)$-module. Then
there is a natural isomorphism
$$
\Hom_{\Lambda(N)}(\mathcal{U}, \ind_H^K\mathcal{V})\overset{\sim}{\longrightarrow} \ind_H^K\Hom_{\Lambda(N)}(\mathcal{U},\mathcal{V})
$$
of $E[K/N]$-modules. (Here $\ind_H^K\mathcal{V}\simeq E[K]\otimes_{E[H]}\mathcal{V}\simeq \Lambda(K)\otimes_{\Lambda(H)}\mathcal{V}$.) 
\end{lem}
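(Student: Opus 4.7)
The plan is to exploit the finiteness of $[K:H]$: since $K$ is compact and $H\subset K$ is open, $K/H$ is finite, and both inductions reduce to finite direct sums. Fix once and for all coset representatives $g_1,\ldots,g_r$ for $K/H$ (with $g_1=1$) so that
$$\ind_H^K\mathcal{V}\;=\;{\bigoplus}_{i=1}^{r}g_i\otimes\mathcal{V},\qquad\ind_H^K\Hom_{\Lambda(N)}(\mathcal{U},\mathcal{V})\;=\;{\bigoplus}_{i=1}^{r}g_i\otimes\Hom_{\Lambda(N)}(\mathcal{U},\mathcal{V})$$
as $E$-vector spaces. Because $N$ is normal in $K$, each summand $g_i\otimes\mathcal{V}$ is $\Lambda(N)$-stable, with the \emph{twisted} action $n(g_i\otimes v)=g_i\otimes(g_i^{-1}ng_i)v$; write $\mathcal{V}^{g_i}$ for the resulting twisted $\Lambda(N)$-module. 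Since $\Hom_{\Lambda(N)}(\mathcal{U},-)$ commutes with finite direct sums,
$$\Hom_{\Lambda(N)}(\mathcal{U},\ind_H^K\mathcal{V})\;\simeq\;{\bigoplus}_{i=1}^{r}\Hom_{\Lambda(N)}(\mathcal{U},\mathcal{V}^{g_i}).$$

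The heart of the proof is to identify each factor $\Hom_{\Lambda(N)}(\mathcal{U},\mathcal{V}^{g_i})$ with a copy of $\Hom_{\Lambda(N)}(\mathcal{U},\mathcal{V})$ by "untwisting" via $g_i$. For this I use the implicit $K$-action on $\mathcal{U}$ (certainly present in the application $\mathcal{U}=B_\bullet\Lambda(N)$, where it is the conjugation action on $N$ described earlier, and needed for the $E[K/N]$-module statement of the lemma to make sense). Explicitly I send $\sum_i g_i\otimes\phi_i$ on the right-hand side to the assignment $u\mapsto\sum_i g_i\otimes\phi_i(g_i^{-1}u)$. A direct computation shows this is $\Lambda(N)$-linear: the twist $n\mapsto g_i^{-1}ng_i$ introduced by moving $n$ past $g_i^{-1}$ on $\mathcal{U}$ exactly cancels the twist defining the action on $\mathcal{V}^{g_i}$. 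The inverse decomposes $\psi$ into its components $\psi_i:\mathcal{U}\to\mathcal{V}^{g_i}$ and sets $\phi_i(u):=\psi_i(g_iu)$, which is $\Lambda(N)$-linear into $\mathcal{V}$ by the same cancellation.

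It remains to verify naturality and equivariance. Independence of the choice of coset representatives is checked using the natural $H$-action $(h\phi)(u):=h\phi(h^{-1}u)$ on $\Hom_{\Lambda(N)}(\mathcal{U},\mathcal{V})$, which is itself $\Lambda(N)$-linear thanks to normality of $N$ in $K$. Finally the $K$-action $(\kappa\psi)(u)=\kappa\psi(\kappa^{-1}u)$ on the left-hand side matches the left-translation $K$-action on the induction on the right-hand side when unwound on a decomposable element $g_i\otimes\phi$, and both factor through $K/N$ by exactly the argument already given for $B_\bullet\Lambda(N)$. I do not expect a real obstacle here; the only subtlety is bookkeeping of the twisted $\Lambda(N)$-structures, and once the finiteness of $K/H$ is invoked so that each side becomes a literal finite direct sum, the whole argument is routine.
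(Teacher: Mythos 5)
Your proof is correct, and it takes a more explicit route than the paper's. The paper simply sends $\gamma$ to $f_\gamma$ with $f_\gamma(\kappa)(u)=\gamma(u)(\kappa)$ and leaves the verification to the reader; taken literally, with $K$ acting by right translation on $\ind_H^K\mathcal{V}$ and $N\vartriangleleft K$, the map $u\mapsto\gamma(u)(\kappa)$ is only $\Lambda(N)$-semilinear for the automorphism $n\mapsto\kappa n\kappa^{-1}$ of $N$, so the ``details'' one is asked to fill in are exactly the untwisting by the $\kappa$-action on $\mathcal{U}$ that you make explicit. Your decomposition $\ind_H^K\mathcal{V}=\bigoplus_i g_i\otimes\mathcal{V}$ into finitely many twisted $\Lambda(N)$-summands $\mathcal{V}^{g_i}$ (using finiteness of $K/H$ and normality of $N$) and the identification $\Hom_{\Lambda(N)}(\mathcal{U},\mathcal{V}^{g_i})\simeq\Hom_{\Lambda(N)}(\mathcal{U},\mathcal{V})$ via $\phi\mapsto\phi(g_i\cdot-)$ are sound: the conjugation on $N$ arising from commuting $n$ past $g_i$ cancels precisely the twist defining $\mathcal{V}^{g_i}$. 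You are also right to flag that a $K$-action on $\mathcal{U}$, compatible with the $\Lambda(N)$-module structure via conjugation, is an implicit hypothesis in the statement --- without it neither side carries a natural $E[K/N]$-module structure, since an $H$-action on $\Hom_{\Lambda(N)}(\mathcal{U},\mathcal{V})$ acting only on $\mathcal{V}$ does not preserve $\Lambda(N)$-linearity --- and this hypothesis is fulfilled in the sole application $\mathcal{U}=B_\bullet\Lambda(N)$ (with its conjugation action described in the paper's Notation). The remaining checks you outline (independence of coset representatives, $K$-equivariance) are indeed routine and reduce to the same bookkeeping, though writing them out would complete the argument.
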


\begin{proof}
Sending $\gamma: \mathcal{U} \rightarrow \ind_H^K\mathcal{V}$ to the function $f_{\gamma}: K \rightarrow \Hom_{\Lambda(N)}(\mathcal{U},\mathcal{V})$ defined as
$f_{\gamma}(\kappa)(u)=\gamma(u)(\kappa)$ does the job. (Here $\kappa \in K$ and $u \in \mathcal{U}$.) We leave the details to the reader.
\end{proof}

The restriction map gives an isomorphism of $E[N]$-modules
$\ind_{K^s}^{NK^s}\check{V}^s \overset{\sim}{\longrightarrow} \ind_{N \cap K^s}^N \check{V}^s$. We transfer the $K^s$-action on the source to 
$\ind_{N \cap K^s}^N \check{V}^s$. More explicitly, for a function $f:N \rightarrow \check{V}^s$ in the latter space and $\kappa \in K^s$ we have
$(\kappa f)(n)=\kappa\cdot f(\kappa^{-1}n \kappa)$. Now, by Frobenius reciprocity (and Lemma \ref{he} applied to the inclusion $N\cap K^s \subset N$) we get a quasi-isomorphism
$$
R\Hom_{\Lambda(N)}(E, \ind_{N \cap K^s}^{N}\check{V}^s) \overset{\sim}{\longrightarrow} R\Hom_{\Lambda(N \cap K^s)}(E, \check{V}^s).
$$
$N$ acts trivially on this complex, and the $K^s$-action is the natural one on the target sending an element $\eta: B_{\bullet} \Lambda(N \cap K^s) \rightarrow \check{V}^s$ 
to its conjugate $\kappa \eta\kappa^{-1}$ for $\kappa \in K^s$ (by unwinding the Frobenius reciprocity map). 

\begin{lem}\label{twist}
$R\Hom_{\Lambda(N \cap K^s)}(E, \check{V}^s) \overset{\sim}{\longrightarrow} R\Hom_{\Lambda(s(N \cap K^s)s^{-1})}(E, \check{V})$.
\end{lem}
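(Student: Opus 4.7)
The plan is to recognize Lemma \ref{twist} as a pure change-of-variables statement along conjugation by $s$, with no genuine homological content beyond functoriality of the bar construction.

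First I would note that since $K^s = K \cap s^{-1}Ks$, we have $sK^s s^{-1} = sKs^{-1} \cap K \subset K$, so both $H := N\cap K^s$ and $H' := sHs^{-1}$ are open subgroups of $K$. Conjugation $c_s : x \mapsto sxs^{-1}$ is then a topological group isomorphism $H \overset{\sim}{\to} H'$, and induces an isomorphism of pseudocompact $E$-algebras $\Lambda(c_s) : \Lambda(H) \overset{\sim}{\to} \Lambda(H')$.

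Unwinding the twist, $x \in H$ acts on $\check{V}^s$ by $x \ast_s \check{v} = (sxs^{-1})\check{v} = c_s(x)\cdot \check{v}$, where the right-hand side uses the restriction of the ambient $K$-action on $\check{V}$ to $H' \subset K$. Thus $\check{V}^s$ is nothing other than the pullback $c_s^*(\check{V})$ of $\check{V}$ (regarded as a $\Lambda(H')$-module) along $\Lambda(c_s)$. Once this is recognized, the lemma reduces to a general naturality statement: for any topological group isomorphism $c : H \overset{\sim}{\to} H'$, functoriality of the bar resolution produces a $\Lambda(c)$-semilinear isomorphism of complexes $B_\bullet \Lambda(c) : B_\bullet \Lambda(H) \overset{\sim}{\to} B_\bullet \Lambda(H')$, and precomposition with its inverse defines a canonical isomorphism of complexes
$$
\Hom_{\Lambda(H')}\bigl(B_\bullet \Lambda(H'), M\bigr) \overset{\sim}{\longrightarrow} \Hom_{\Lambda(H)}\bigl(B_\bullet \Lambda(H), c^*M\bigr)
$$
for every pseudocompact $\Lambda(H')$-module $M$. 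Taking $c = c_s$ and $M = \check{V}$ then produces the asserted isomorphism between $R\Hom_{\Lambda(H)}(E, \check{V}^s)$ and $R\Hom_{\Lambda(H')}(E, \check{V})$.

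There is no substantive obstacle here; the only work is bookkeeping. One must verify that the $c_s$-semilinearity of the identity map $\check{V}^s = \check{V}$ matches up with the $\Lambda(c_s)$-semilinearity of $B_\bullet \Lambda(c_s)$ so that the displayed map is genuinely $E$-linear and commutes with the differentials of the bar complex. Both properties are immediate from the explicit formula $B_n \Lambda(c_s) = \Lambda(c_s)^{\hat\otimes (n+1)}$ on $\Lambda(H)^{\hat\otimes (n+1)}$, and in fact yield an isomorphism of complexes on the nose, not merely a quasi-isomorphism.
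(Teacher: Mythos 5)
Your argument is correct and matches the paper's proof, which likewise sends $\eta$ to its composition with the isomorphism of bar resolutions induced by conjugation and checks $\Lambda(s(N\cap K^s)s^{-1})$-linearity. One small slip: to obtain your displayed map from $\Hom_{\Lambda(H')}(B_\bullet\Lambda(H'),M)$ to $\Hom_{\Lambda(H)}(B_\bullet\Lambda(H),c^*M)$ you should precompose with $B_\bullet\Lambda(c_s)\colon B_\bullet\Lambda(H)\to B_\bullet\Lambda(H')$ itself, not its inverse (precomposing with the inverse gives the arrow in the opposite direction); this has no bearing on the substance since the map is an isomorphism either way.
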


\begin{proof}
Send an $\eta$ as above to its composition with the map $B_{\bullet}\Lambda(s(N \cap K^s)s^{-1}) \overset{\sim}{\longrightarrow} B_{\bullet}\Lambda(N \cap K^s)$ induced by the conjugation map $s^{-1}(\cdot)s$. One easily checks the resulting map is $\Lambda(s(N \cap K^s)s^{-1})$-linear for the natural action of 
$s(N \cap K^s)s^{-1}=K \cap sNs^{-1}\subset K$ on $\check{V}$.
\end{proof}

\begin{rem}
The target in Lemma \ref{twist} carries a natural action of $K^{s^{-1}}=K \cap sKs^{-1}$ since it contains $s(N \cap K^s)s^{-1}$ as a normal subgroup. The map 
$s^{-1}(\cdot)s$ gives an isomorphism $K^s \overset{\sim}{\longrightarrow} K^{s^{-1}}$ under which the $K^{s^{-1}}$-action on the target corresponds to the 
$K^s$-action on the source.
\end{rem}

We conclude that there is a natural quasi-isomorphism
$$
R\Hom_{\Lambda(N)}(E,I_K^G \check{V}) \overset{\sim}{\longrightarrow} {\prod}_{s\in S^+/S(\OO)} \ind_{NK^s}^K R\Hom_{\Lambda(N \cap K^s)}(E, \check{V}^s)
$$
which is compatible with restriction in the following sense. For an open subgroup $N'\subset N$ the restriction map $\res_{N,N'}$ (with $M=I_K^G \check{V}$) corresponds to
$\prod_s \varrho_s$ where 
$$
\varrho_s: \ind_{NK^s}^K R\Hom_{\Lambda(N \cap K^s)}(E, \check{V}^s) \longrightarrow \ind_{N'K^s}^K R\Hom_{\Lambda(N' \cap K^s)}(E, \check{V}^s)
$$
is composition with the restriction map
$$
\res_{N \cap K^s, N' \cap K^s}: R\Hom_{\Lambda(N \cap K^s)}(E, \check{V}^s) \longrightarrow R\Hom_{\Lambda(N' \cap K^s)}(E, \check{V}^s).
$$


\section{Iwahori factorization and $p$-valuations}

The next step is to understand the individual complexes $R\Hom_{\Lambda(K \cap sNs^{-1})}(E, \check{V})$ for $s\in S^+$ appearing in the factorization of 
$R\Hom_{\Lambda(N)}(E,I_K^G \check{V})$, cf. Lemma \ref{twist}. 

We will now assume $N \vartriangleleft K$ has Iwahori factorization (with respect to $B$). This means three things:

\begin{itemize}
\item[(i)] $(N \cap \bar{U})\times (N \cap T)\times (N\cap U) \overset{\sim}{\longrightarrow} N$ is a homeomorphism;
\item[(ii)] $s(N\cap U)s^{-1}\subset N\cap U$
\item[(iii)] $s(N\cap \bar{U})s^{-1}\supset N\cap \bar{U}$
\end{itemize}
for all $s \in S^+$. It is well-known that $K$ has a neighborhood basis at the identity consisting of such groups. In fact one can take $N=K_r$ for large enough $r$, cf. 
\cite[Prop.~1.4.4]{Cas95} and \cite[Thm.~2.5]{IM65}, or the very readable account \cite[Prop.~3.12]{Rab07}.

In this section we fix an $s \in S^+$. Conjugating (i) by $s$ and then intersecting with $K$ results in a decomposition of the group of interest,
$$
(K \cap sNs^{-1} \cap \bar{U}) \times (N \cap T) \times (K \cap sNs^{-1} \cap U) \overset{\sim}{\longrightarrow}
K \cap sNs^{-1}.
$$
Here the rightmost factor is contained in $N \cap U$ by (ii), and the leftmost factor contains $N \cap \bar{U}$ by (iii). The homeomorphism in (i) is not an isomorphism of groups. Our first goal in this section is to show that it nevertheless becomes a group isomorphism after passing to the graded groups defined by a suitable $p$-valuation. 

Recall that a $p$-valuation on a group $N$ is a function $\omega: N \backslash \{1\} \rightarrow (\frac{1}{p-1},\infty)$ satisfying the axioms

\begin{itemize}
\item $\omega(x^{-1}y)\geq \min\{\omega(x),\omega(y)\}$
\item $\omega([x,y])\geq \omega(x)+\omega(y)$
\item $\omega(x^p)=\omega(x)+1$
\end{itemize}
for all $x,y \in N$. Here the commutator is $[x,y]=xyx^{-1}y^{-1}$. This notion was introduced by Lazard in \cite{Laz65}, and his theory is elegantly exposed in \cite{Sch11} which we will use as our main reference. A $p$-valued group $(N,\omega)$ is said to be saturated if all $x \in N$ satisfying the inequality $\omega(x)> \frac{p}{p-1}$ lie in $N^p$, cf. \cite[p.~187]{Sch11}. If so the set of $p^n$-powers $N^{p^n}$ is a subgroup.

\begin{lem}\label{val}
There are arbitrarily small $N$ which admit a $p$-valuation $\omega$ such that one has an equality
$$
\omega(n)=\min\{\omega(\bar{u}), \omega(t), \omega(u)\}
$$
for all $n \in N$ with Iwahori factorization $n=\bar{u}tu$ as in (i) above, and $(N,\omega)$ is saturated. 
\end{lem}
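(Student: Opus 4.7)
The plan is to construct $(N,\omega)$ from the exponential map on a well-chosen lattice in $\Lie(G)$ adapted to the Iwahori factorization, and then to rescale by taking $p$-th powers to produce arbitrarily small $N$. Let $\g_{\OO}:=\Lie(G_{/\OO})$; since $G$ is unramified and $G_{/\OO}$ is smooth reductive, $\g_{\OO}$ decomposes as an $\OO$-module into $\bar{\frak{u}}_{\OO} \oplus \frak{t}_{\OO} \oplus \frak{u}_{\OO}$, compatibly with the Iwahori factorization of $K_r$. Fix an $\OO$-basis of $\g_{\OO}$ which is a union of bases of the three summands, and let $v:\g(F)\setminus\{0\}\to \Z$ be the minimum of the $F$-adic valuations of coordinates in this basis. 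For every integer $r$ in a suitable range (roughly $r>e/(p-1)$), standard Lazard theory (cf.\ \cite[Sect.~27--29]{Sch11}) gives a homeomorphism $\exp:\varpi^r\g_{\OO}\overset{\sim}{\longrightarrow} K_r$ which intertwines the direct-sum decomposition of the lattice with the Iwahori factorization of $K_r$ in each root group and on $T$.

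On $N_0:=K_r$, define $\omega_0(\exp X):=v(X)/e$. I would verify the three $p$-valuation axioms via the Baker--Campbell--Hausdorff formula: $(\exp X)^p=\exp(pX)$ yields $\omega_0(n^p)=\omega_0(n)+1$ immediately, and for the product or commutator of $\exp X$ and $\exp Y$ the higher BCH bracket terms have strictly larger $v$-valuation than the linear part (resp.\ than $v(X)+v(Y)$) because the Chevalley structure constants lie in $\OO$ and the denominators $n!$ are controlled by the choice of $r$. The Iwahori-factorization compatibility is then immediate: writing $n=\bar u t u$ with $\bar u=\exp\bar X$, $t=\exp Y$, $u=\exp X_u$, BCH gives $n=\exp(\bar X+Y+X_u+Z)$ with $v(Z)$ strictly larger than the linear contribution, and since the three summands are $v$-orthogonal we have $v(\bar X+Y+X_u)=\min\{v(\bar X),v(Y),v(X_u)\}$, so $\omega_0(n)=\min\{\omega_0(\bar u),\omega_0(t),\omega_0(u)\}$.

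For saturation, I would choose $r$ in the nonempty range $e/(p-1)<r\leq ep/(p-1)$ (which has length $e$, so contains at least one integer): if $\omega_0(n)>p/(p-1)$ then $v(X)>ep/(p-1)$, hence $v(X/p)>e/(p-1)$, so $X/p\in\varpi^r\g_{\OO}$ and $n=\exp(X/p)^p\in N_0^p$. To produce arbitrarily small $N$, take $N:=N_0^{p^k}$ for $k\geq 0$; this is an open subgroup by saturation \cite[Rem.~26.9]{Sch11}, corresponds to the sublattice $\varpi^{r+ke}\g_{\OO}$, and inherits the shifted $p$-valuation $\omega:=\omega_0-k$. All three axioms, the Iwahori-factorization identity, and the saturation condition descend from $(N_0,\omega_0)$ under this shift because both the BCH calculus and the characterization of $p$-th powers are preserved by rescaling.

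The main obstacle I anticipate is managing saturation, which constrains $r$ from above while exponential convergence constrains it from below, forcing all parameters into a narrow window; the BCH estimates themselves are routine once the basis is fixed. A secondary technical point is ensuring $\exp$ really identifies $K_r\cap U_\alpha$ with $\varpi^r\frak{u}_\alpha$ for each root subgroup (and analogously for $T$), which is where the unramifiedness and the smooth integral model enter in an essential way.
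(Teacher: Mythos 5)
Your construction is in the same spirit as the paper's — build a global chart near the identity, pull back a scaled $\Z_p$-lattice filtration, and shrink by $p$-powers — but you make a genuinely different choice of chart, and this shifts where the work lands. The paper does \emph{not} use the exponential map: it takes the chart $\Phi = (\bar{\phi}\times\psi\times\phi)\circ(\text{mult})^{-1}$, where $\bar{\phi},\psi,\phi$ are ordered-basis charts on $N\cap\bar{U}$, $N\cap T$, $N\cap U$ arising from the formal group laws of the three $\OO$-group schemes (Serre's construction). Because $\Phi$ is literally built out of the Iwahori factorization, the equality $\omega(n)=\min\{\omega(\bar{u}),\omega(t),\omega(u)\}$ is \emph{tautological} once $\tilde{\omega}$ is defined by $\Phi(n)=(p^mx_1,\dots,p^mx_d)\mapsto \delta+\min_i v(x_i)$. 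The nontrivial content — that this chart-defined $\tilde{\omega}$ is actually a $p$-valuation on $N_m$ for $m\gg0$ — is outsourced to \cite[Thm.~27.1]{Sch11}. In your approach the trade-off is reversed: the exponential chart does \emph{not} intertwine the direct-sum decomposition of $\g_{\OO}$ with the Iwahori product (indeed $\exp(\bar X)\exp(Y)\exp(X_u)\neq\exp(\bar X+Y+X_u)$), so you must invoke BCH estimates to absorb the correction term $Z$ and recover the min formula, whereas the $p$-valuation axioms are more transparent from $\Lie$-algebra considerations. Both are legitimate instantiations of Lazard's theory.

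Two points need repair in your write-up. First, your claimed admissible range $e/(p-1)<r\le ep/(p-1)$ is \emph{not} the right window for saturation: your own argument needs $v(X)>ep/(p-1)$ to force $v(X/p)=v(X)-e\ge r$, which (using integrality of $v$) only works when $r\le\lfloor e/(p-1)\rfloor+1$. So the correct constraint is $e/(p-1)<r\le\lfloor e/(p-1)\rfloor+1$, which is a window of length at most $1$ (not $e$), still nonempty but containing exactly one integer. The conclusion survives, but the stated range is wrong. Second, your construction silently assumes that $\exp$ converges on $\varpi^r\g_{\OO}$ and has image exactly $K_r$ for this minimal choice of $r$, and that the BCH series for a product of three exponentials gives strictly $v$-larger corrections; for the minimal $r$ forced by saturation the denominators $n!$ and the error bounds are genuinely delicate and deserve an explicit argument (or a citation) rather than the phrase ``routine.'' The paper's choice to work with arbitrary formal-group coordinates and shrink to $N_m$ for $m\gg0$ sidesteps this boundary-case analysis entirely, which is part of why it can quote Schneider so cleanly. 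Finally, note that the paper's proof additionally verifies Iwahori properties (ii) and (iii) for $N_m$ via saturation of the factors; your proposal does not address these, which is defensible since they are not in the lemma statement, but they are needed immediately afterwards and you should say a word about why $N_0^{p^k}$ inherits them.
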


\begin{proof} 
Consider the smooth affine group scheme $U_{/\OO}$ and let $N=K_r$ for some $r$. Note that 
$$
N \cap U=\ker\big(U(\OO)\rightarrow U(\OO/\varpi^r\OO)\big).
$$
By a general observation of Serre this carries a natural $p$-valuation $\omega_U$ for which $(N \cap U,\omega_U)$ is saturated when $r$ is large enough, see 
\cite[Ch.~IV.9]{Se64} and \cite[Sect. 2.3]{Se95}, or the more recent \cite[Lem.~2.2.2]{HKN} (and the pertaining discussion on p. 239). The construction goes via the formal group law given by choosing coordinates $\hat{U}\simeq \text{Spf}\big(\OO[\![T_1,\ldots,T_\nu]\!]\big)$ for the formal completion of $U$ at the identity, cf. \cite[Sect.~II.10]{Dem86}.

The same comments apply to the group schemes $\bar{U}$ and $T$, which results in the two saturated $p$-valued groups $(N \cap \bar{U}, \omega_{\bar{U}})$ and $(N \cap T, \omega_T)$.  Choosing ordered bases gives homeomorphisms 
$$
\bar{\phi}: N \cap \bar{U} \overset{\sim}{\longrightarrow} \Z_p^a \y \y \y \y \y 
\psi: N \cap T \overset{\sim}{\longrightarrow} \Z_p^b \y \y \y \y \y 
\phi: N \cap U \overset{\sim}{\longrightarrow} \Z_p^a,
$$
all sending $1_N$ to the zero-vector. Composing $\bar{\phi}\times \psi \times \phi$ with the inverse of the multiplication map in (i) gives a global chart
$\Phi: N \overset{\sim}{\longrightarrow} \Z_p^d$ sending $1_N \mapsto 0$, where $d=2a+b$. It follows that $N_m=\Phi^{-1}(p^m \Z_p^d)$ 
is a subgroup for $m>>0$ chosen large enough for the formal group law to have coefficients in $\Z_p$, cf. the proof of 
\cite[Thm.~27.1]{Sch11} which also verifies the following defines a $p$-valuation on $N_m$,
$$
\tilde{\omega}(n)=\delta+\max\{\ell: n \in N_{m+\ell}\}=\delta+\min\{v(x_1),\ldots,v(x_d)\}, \y \y \y \y \y \Phi(n)=(p^mx_1,\ldots,p^mx_d).
$$
Here $\delta=1$ if $p>2$, and $\delta=2$ if $p=2$. Note that the multiplication map 
$$
(N_m \cap \bar{U})\times (N_m \cap T)\times (N_m\cap U) \overset{\sim}{\longrightarrow} N_m
$$ 
is trivially a homeomorphism since its composition with $\Phi$ is the restriction of $\bar{\phi}\times \psi \times \phi$, and for the same reason
$(N_m,\tilde{\omega})$ clearly satisfies the properties in the lemma.
It remains to check properties (ii) and (iii) for $N_m$. We will only do (ii); the argument for (iii) is similar by replacing $s\in S^+$ with $s^{-1}$. For any $u \in N_m \cap U$ we have to check that $sus^{-1}\in N_m$. This follows from the fact that $(N \cap U,\omega_U)$ is saturated, which implies 
$$
(N \cap U)^{p^m}=\phi^{-1}(p^m \Z_p^a)=N_m \cap U
$$ 
by \cite[Cor.~26.12]{Sch11}.
\end{proof}

\section{Review of Lazard theory}\label{laz}

Let $(N,\omega)$ be a $p$-valued group as in the previous lemma \ref{val} of the last section. In this section we review some general constructions and results of Lazard, partly to set up more notation. For any $v\in \R_{>0}$ we let 
$$
N_v=\{n \in N: \omega(n)\geq v\} \y \y \y \y \y N_{v+}=\{n \in N: \omega(n)>v\}. 
$$
Both are normal subgroups of $N$, and $\gr_vN=N_v/N_{v+}$ is a central subgroup of $N/N_{v+}$ (in particular $\gr_vN$ is abelian). We form the associated graded abelian group
$$
\gr N=\bigoplus_{v>0}\gr_vN
$$
which has additional structure. First of all $\gr N$ is an $\F_p$-vector space since $\omega(x^p)=\omega(x)+1$. Moreover, the commutator $[x,y]=xyx^{-1}y^{-1}$
defines a Lie bracket on $\gr N$ which turn it into a graded Lie algebra ($\gr_vN \times \gr_{v'}N \rightarrow \gr_{v+v'}N$). Finally, $\gr N$ naturally becomes a module over the one-variable polynomial ring $\F_p[\pi]$ as follows. The indeterminate $\pi$ acts on $\gr N$ as the degree one map $\pi: \gr_v N \rightarrow \gr_{v+1}N$ given by
$\pi(nN_{v+})=n^pN_{(v+1)+}$ where $n \in N_v$. Since $(N,\omega)$ is of finite rank $\gr N$ is a free $\F_p[\pi]$-module of said rank; which equals $\dim N$. 
We refer to \cite[Sect.~23-25]{Sch11} where all of the above is explained in great detail.

The same remarks apply to the $p$-valued groups $(N \cap \bar{U}, \omega)$ etc., and because of the formula for $\omega$ in Lemma \ref{val} we deduce
that multiplication defines a homeomorphism
$$
(N \cap \bar{U})_v\times (N \cap T)_v\times (N\cap U)_v \overset{\sim}{\longrightarrow} N_v
$$
and similarly for $N_{v+}$. Since $\gr_vN$ is abelian we conclude that there is an isomorphism of $\F_p$-vector spaces
$$
\gr_v (N \cap \bar{U})\oplus \gr_v (N \cap T)\oplus \gr_v(N\cap U) \overset{\sim}{\longrightarrow} \gr_v N,
$$
and summing up over $v$ gives an analogous decomposition of $\gr N$. Further inspection reveals that this direct sum decomposition
\begin{equation}\label{griwa}
\gr (N \cap \bar{U})\oplus \gr (N \cap T)\oplus \gr (N\cap U) \overset{\sim}{\longrightarrow} \gr N
\end{equation}
preserves the $\F_p[\pi]$-module structures, and thus becomes an isomorphism of graded Lie algebras over $\F_p[\pi]$. Here we may assume the Lie bracket is trivial by the perturbation argument given at the very end of Section \ref{formula} below. We will exploit the ensuing factorization of the universal enveloping algebra of $\gr N \otimes_{\F_p[\pi]}E$ below.

Let $\WW=W(E)$ be the ring of Witt vectors (a complete DVR with residue field $E$ in which $p$ remains prime) and consider the completed group algebra $\WW[\![N]\!]$. As 
explained in \cite[Sect.~28]{Sch11} the $p$-valuation $\omega$ defines a function $\tilde{\omega}:  \WW[\![N]\!]\backslash \{0\} \rightarrow \R_{\geq 0}$ which extends $\omega$ in the sense that $\tilde{\omega}(n-1)=\omega(n)$ holds for all $n \in N$. If we fix an ordered basis $(n_1,\ldots,n_d)$ for $(N,\omega)$ it is explicitly given by the formula
$$
\tilde{\omega}(\lambda)=\inf_{\alpha} \bigg(v(c_{\alpha})+\sum_{i=1}^d \alpha_i \omega(n_i)\bigg) \y \y \y \y 
\lambda=\sum_{\alpha}c_{\alpha}\bf{b}^{\alpha}.
$$
Here $\alpha=(\alpha_1,\ldots,\alpha_d)\in \N^d$ and ${\bf{b}}^{\alpha}={\bf{b}}_1^{\alpha_1}\cdots {\bf{b}}_d^{\alpha_d}$ where ${\bf{b}}_i=n_i-1$. We emphasize that the function $\tilde{\omega}$ is independent of the choice of basis however, cf. \cite[Cor.~28.4]{Sch11}. This gives rise to a filtration of $\WW[\![N]\!]$ as follows. For $v \geq 0$ we let
$$
J_v=\WW[\![N]\!]_v=\{\lambda: \tilde{\omega}(\lambda)\geq v\} \y \y \y \y J_{v+}=\WW[\![N]\!]_{v+}=\{\lambda: \tilde{\omega}(\lambda)>v\}.
$$
These two-sided ideals form a fundamental system of open neighborhoods at zero, and can be made very explicit. For instance $\WW[\![N]\!]_v$ is the smallest closed $\WW$-submodule containing all elements of the form $p^{\mu}(\nu_1-1)\cdots(\nu_t-1)$ with $\mu+\sum_{i=1}^t \omega(\nu_i)\geq v$, cf. \cite[Thm.~28.3(ii)]{Sch11}.
The graded completed group algebra is then defined as
$$
\gr \WW[\![N]\!]=\bigoplus_{v \geq 0} \gr_v \WW[\![N]\!], \y \y \y \y \gr_v \WW[\![N]\!]=J_v/J_{v+}.
$$
This is naturally an algebra over $\gr \WW$ (formed with respect to the filtration $p^n\WW$). Note that $\gr \Z_p \simeq \F_p[\pi]$ via the identification
$p+p^2\Z_p\leftrightarrow \pi$, and this is how we view $\gr \WW$ as an $\F_p[\pi]$-algebra below.
For each $v>0$ there is a natural homomorphism $\mathcal{L}_v: \gr_v N \rightarrow \gr_v \WW[\![N]\!]$ sending $nN_{v+}\mapsto (n-1)+J_{v+}$, and one of the main results of Lazard is that $\mathcal{L}=\oplus_{v >0} \mathcal{L}_v$ extends to an isomorphism of graded $\gr \WW$-algebras
$$
\tilde{\mathcal{L}}: \gr \WW \otimes_{\F_p[\pi]}U(\gr N) \overset{\sim}{\longrightarrow} \gr \WW[\![N]\!].
$$
See \cite[Thm.~28.3(i)]{Sch11}. We are not assuming $\omega$ is $\Z$-valued, and this flexibility will be important later when we perturb $\omega$ to make the Lie algebra $\gr N$ abelian, cf. \cite[Lem.~26.13(i)]{Sch11}. However, we may and do assume that $\omega$ takes values in $\frac{1}{A}\Z$ for some $A \in \Z_{>0}$ (cf. \cite[Cor.~33.3]{Sch11}). Then we reindex and let $\Fil^i \WW[\![N]\!]:=J_{\frac{i}{A}}$, which defines a ring filtration of $\WW[\![N]\!]$ indexed by integers $i \geq 0$.

We will employ the analogous results for the reduction $\Lambda(N)=E \otimes_{\WW}\WW[\![N]\!]$. We endow $\Lambda(N)$ with the quotient filtration $\Fil^i \Lambda(N)$ defined as the image of $\Fil^i \WW[\![N]\!]$ under the quotient map $\WW[\![N]\!] \twoheadrightarrow \Lambda(N)$. The associated graded $E$-algebra 
$\gr \Lambda(N)=\oplus_{i\geq 0}\Fil^i \Lambda(N)/\Fil^{i+1} \Lambda(N)$ is isomorphic to $E \otimes_{\gr \WW}\gr \WW[\![N]\!]$. The tensor product $E \otimes_{\gr \WW}\tilde{\mathcal{L}}$ therefore induces an isomorphism
$$
\bar{\mathcal{L}}: U(E \otimes_{\F_p[\pi]} \gr N)=E \otimes_{\F_p[\pi]} U(\gr N) \overset{\sim}{\longrightarrow} \gr \Lambda(N).
$$
Here $\F_p[\pi]\rightarrow E$ takes $\pi\mapsto 0$. If we tensor (\ref{griwa}) by $E$ over $\F_p[\pi]$ and take universal enveloping algebras the Kronecker product gives an isomorphism
$$
U(E \otimes_{\F_p[\pi]}\gr (N \cap \bar{U}))\otimes_E U(E \otimes_{\F_p[\pi]}\gr (N \cap T))\otimes_E U(E \otimes_{\F_p[\pi]}\gr (N\cap U)) \overset{\sim}{\longrightarrow} U(E \otimes_{\F_p[\pi]}\gr N)
$$
which via $\bar{\mathcal{L}}$ gives the main take-away from this section; namely that there is a natural isomorphism of graded $E$-algebras
\begin{equation}\label{lamb}
\gr \Lambda(N \cap \bar{U}) \otimes_E \gr \Lambda(N \cap T) \otimes_E \gr \Lambda(N \cap U) \overset{\sim}{\longrightarrow} \gr \Lambda(N).
\end{equation}
In the next section we will extend this to $s$-conjugates ($s\in S^+$) and invoke a K\"{u}nneth formula. 

\section{Application of the K\"{u}nneth formula}\label{formula}

The isomorphism (\ref{lamb}) extends easily to arbitrary $s \in S^+$. Having fixed $(N,\omega)$ as in Lemma \ref{val} we define a $p$-valuation $\omega_s$ on 
$K \cap sNs^{-1}$ by the formula $\omega_s(sns^{-1})=\omega(n)$. It is compatible with the Iwahori factorization of $K \cap sNs^{-1}$ as in Lemma \ref{val}. Defining $\gr \Lambda(K \cap sNs^{-1})$ and so on relative to $\omega_s$ the arguments leading up to (\ref{lamb}) therefore yield more generally an isomorphism of graded $E$-algebras
$$
\gr \Lambda(K \cap sNs^{-1} \cap \bar{U}) \otimes_E \gr \Lambda(N \cap T) \otimes_E \gr \Lambda(K \cap sNs^{-1} \cap U) \overset{\sim}{\longrightarrow} \gr \Lambda(K \cap sNs^{-1}).
$$
For $s=1$ one recovers (\ref{lamb}). The shuffle product (cf. \cite[Prop.~4.2.4]{Lod98} for example) gives a homotopy equivalence relating bar resolutions 
$$
B_{\bullet}\gr \Lambda(K \cap sNs^{-1} \cap \bar{U}) \otimes_E B_{\bullet}\gr \Lambda(N \cap T) \otimes_E B_{\bullet}\gr \Lambda(K \cap sNs^{-1} \cap U) \overset{}{\longrightarrow} B_{\bullet}\gr \Lambda(K \cap sNs^{-1}),
$$
and in turn we have a K\"{u}nneth formula in the form of a quasi-isomorphism 
\begin{multline}
R\Hom_{\gr \Lambda(K \cap sNs^{-1})}(E,\gr \check{V}) \overset{\sim}{\longrightarrow} \\
R\Hom_{\gr \Lambda(K \cap sNs^{-1} \cap \bar{U})}(E,\gr \check{V}) \otimes_E 
R\Hom_{\gr \Lambda(N \cap T)}(E,E) \otimes_E 
R\Hom_{\gr \Lambda(K \cap sNs^{-1} \cap U)}(E,E).
\end{multline}
Here $\gr \check{V}$ is defined with respect to $\omega_s$. That is we first filter $\check{V}$ by $\Fil^i \check{V}:=\Fil^i \Lambda(K \cap sNs^{-1})\check{V}$ and let $\gr \check{V}$ be the associated graded module. We have taken $N$ small enough that it acts trivially on $V$ and therefore $\gr\check{V}$ factors as an external tensor product $\gr\check{V} \boxtimes E \boxtimes E$ where the two $E$'s denote the trivial modules over $\gr \Lambda(N \cap T)$ and $\gr \Lambda(K \cap sNs^{-1} \cap U)$ respectively. Note that
$K \cap sNs^{-1} \cap U\subset N$ by property (ii) of an Iwahori factorization. Taking cohomology $h^i$ results in $E$-vector space isomorphisms 
\begin{multline}\label{kun}
\Ext^i_{\gr \Lambda(K \cap sNs^{-1})}(E,\gr \check{V}) \overset{\sim}{\longrightarrow} \\
\bigoplus_{a+b+c=i}
\Ext^a_{\gr \Lambda(K \cap sNs^{-1} \cap \bar{U})}(E,\gr \check{V}) \otimes_E 
\Ext^b_{\gr \Lambda(N \cap T)}(E,E) \otimes_E 
\Ext^c_{\gr \Lambda(K \cap sNs^{-1} \cap U)}(E,E)
\end{multline}
compatible with restriction maps when shrinking $N$. Note that by perturbing $\omega$ (i.e., replacing it by $\omega_C=\omega-C$ for sufficiently small $C>0$) we may assume 
all the graded algebras above are polynomial rings over $E$ in a number of variables, cf. \cite[Lem.~26.13]{Sch11}. This will allow us to control some of the factors in the K\"{u}nneth formula using Koszul duality. 

\section{Restriction and the Koszul dual}

We first deal with the two factors $\Ext^b_{\gr \Lambda(N \cap T)}(E,E)$ and $\Ext^c_{\gr \Lambda(K \cap sNs^{-1} \cap U)}(E,E)$ in the K\"{u}nneth formula (\ref{kun}). This can be done uniformly so in this section we let $H$ denote one of the two groups $N \cap T$ or $K \cap sNs^{-1} \cap U$ equipped with the $p$-valuation $\omega_s$. (Note that 
$\omega_s=\omega$ in the case $H=N \cap T$.) We assume that $\omega$ has been chosen in such a way that $\gr \Lambda(H)$ is a polynomial ring over $E$ in a number of variables, or more canonically a symmetric algebra
$$
\gr \Lambda(H) \simeq S(E \otimes_{\F_p[\pi]} \gr H)
$$
via the mod $p$ Lazard isomorphism $\bar{\mathcal{L}}$ discussed in section \ref{laz}. This can be ensured by perturbing $\omega$ if necessary as noted above. Then 
the Yoneda algebra $\bigoplus_{j\geq 0}\Ext_{\gr \Lambda(H)}^{j}(E,E)$ is the Koszul dual which in this case is simply the exterior algebra 
$\bigwedge^{\bullet}(E \otimes_{\F_p[\pi]} \gr H)^*$ of the $E$-linear dual, cf. \cite[Thm.~1.2.5]{BGS96}. In particular
$$
\Ext_{\gr \Lambda(H)}^{j}(E,E) \simeq \bigwedge^{j}(E \otimes_{\F_p[\pi]} \gr H)^*.
$$
For any choice of $H$ we get a saturated $p$-valued group $(H,\omega_s)$, cf. the proof of lemma \ref{val}. In particular the set of $p^m$-powers
$H^{p^m}=H_{(m+\frac{1}{p-1})+}$ is a subgroup, and in fact they form a fundamental system of open neighborhoods of the identity as $m$ varies, cf. \cite[Prop.~26.15]{Sch11}.

\begin{lem}\label{ext}
Let $n \geq 1$. Then $\Ext_{\gr \Lambda(H^{p^m})}^n(E,E)\overset{0}{\longrightarrow} \Ext_{\gr \Lambda(H^{p^{m+1}})}^n(E,E)$ for all $m \in \Z_{\geq 0}$.
\end{lem}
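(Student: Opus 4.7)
The plan is to use Koszul duality to translate the problem into one about a map of $\F_p[\pi]$-modules, and then to observe that this map vanishes modulo $\pi$. Since $\omega$ has been perturbed so that $\gr N$ (hence $\gr H$, hence $\gr H^{p^m}$ for each $m \geq 0$) is an abelian Lie algebra, the mod $p$ Lazard isomorphism $\bar{\mathcal{L}}$ identifies $\gr \Lambda(H^{p^m})$ with the symmetric algebra $S(V_m)$, where $V_m := E \otimes_{\F_p[\pi]} \gr H^{p^m}$. Koszul duality then gives $\Ext^n_{\gr \Lambda(H^{p^m})}(E,E) \simeq \bigwedge^n V_m^*$ as already observed in the text, functorially in $m$: the restriction map in the lemma corresponds to the $E$-linear dual of the $n$-th exterior power of the map $\alpha_m : V_{m+1} \to V_m$ induced by the inclusion $H^{p^{m+1}} \hookrightarrow H^{p^m}$.

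Once this reduction is set up, the core of the argument is to show that $\alpha_m$ is the zero map for every $m \geq 0$. Fix an ordered basis $(h_1,\ldots,h_d)$ of $(H,\omega)$; then $(h_i^{p^{m+1}})$ and $(h_i^{p^m})$ are ordered bases of $H^{p^{m+1}}$ and $H^{p^m}$ respectively, and the inclusion is determined by $h_i^{p^{m+1}} = (h_i^{p^m})^p$. By the definition of the $\F_p[\pi]$-action on graded groups from Lazard theory, the induced map $\gr H^{p^{m+1}} \to \gr H^{p^m}$ sends the class $\overline{h_i^{p^{m+1}}}$ to $\pi \cdot \overline{h_i^{p^m}}$. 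Thus the entire image of $\gr H^{p^{m+1}} \to \gr H^{p^m}$ is contained in $\pi \cdot \gr H^{p^m}$, which is annihilated by $- \otimes_{\F_p[\pi]} E$. Hence $\alpha_m = 0$, and taking positive exterior powers and $E$-linear duals yields the vanishing claimed.

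The main point requiring care is the functoriality of the Koszul duality identification $\Ext^n_{S(V)}(E,E) \simeq \bigwedge^n V^*$ with respect to ring maps induced by $V' \to V$, and its compatibility with the bar-resolution-based convention for $\Ext$ used throughout the paper; this is standard but should be spelled out. Everything else (saturation of $H^{p^m}$, abelianness of $\gr H^{p^m}$, compatibility of the $\F_p[\pi]$-module structures, and the polynomial-ring presentation of each $\gr \Lambda(H^{p^m})$) is inherited from the corresponding facts for $H$ via the single perturbation of $\omega$ chosen earlier, since the restricted $p$-valuation on any $H^{p^m}$ again satisfies all the axioms with $\gr$ abelian.
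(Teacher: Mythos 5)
Your proposal is correct and follows essentially the same path as the paper's own proof: reduce via Koszul duality to showing that the induced map on $V_m = E \otimes_{\F_p[\pi]} \gr H^{p^m}$ vanishes, then observe that the inclusion $\gr H^{p^{m+1}} \hookrightarrow \gr H^{p^m}$ has image inside $\pi \cdot \gr H^{p^m}$, which dies after $\otimes_{\F_p[\pi]} E$. The paper phrases the last step as the identity $\gr_{v+1} H^p = \pi\gr_v H$ (after first normalizing to $m=0$, $n=1$) rather than via a choice of ordered basis, but these are the same computation.
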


\begin{proof}
One reduces to the case $m=0$ by replacing $H$ by $H^{p^m}$. We then have to show the vanishing of 
$$
\bigwedge^{n}(E \otimes_{\F_p[\pi]} \gr H)^* \longrightarrow \bigwedge^{n}(E \otimes_{\F_p[\pi]} \gr H^p)^* 
$$
for $n>0$ and we clearly may assume that $n=1$. In other words, we are to check the vanishing of the dual map 
$$
E \otimes_{\F_p[\pi]} \gr H^p=\gr H^p/\pi\gr H^p \longrightarrow \gr H/\pi\gr H=E \otimes_{\F_p[\pi]} \gr H.
$$
This is trivial to verify. Indeed $\gr_{v+1} H^p=\pi \gr_v H$ holds for all $v>0$ as follows straight from the definition of the $\pi$-action on $\gr H$, which proves the claim. 
\end{proof}

The above result is the key to establishing hypothesis (\ref{van}) in the Appendix.

\section{Invoking the appendix}

We will apply Theorem \ref{koz} of Appendix A to the decreasing chain of subalgebras of $\Lambda(K \cap sNs^{-1})$ given by the $p^m$-powers of $N$,
$$
A^{(m)}:=\Lambda\big(K \cap sN^{p^m}s^{-1}\big).
$$
Note that by the proof of lemma \ref{val} we know that $N^{p^m}$ inherits an Iwahori factorization, and therefore
$$
(K \cap sN^{p^m}s^{-1} \cap \bar{U}) \times (N \cap T)^{p^m} \times (sNs^{-1} \cap U)^{p^m} \overset{\sim}{\longrightarrow}
K \cap sN^{p^m}s^{-1}.
$$
Here we have used that both $N \cap T$ and $N \cap U$ are saturated to move the $p^m$-powers outside. (Indeed, suppose $x \in sN^{p^m}s^{-1}\cap U$. Then $s^{-1}xs \in N^{p^m}\cap U=\joinrel=\joinrel=(N \cap U)^{p^m}$ -- as $N \cap U$ is saturated. We conclude that 
$x \in (sNs^{-1} \cap U)^{p^m}$ since $s\in S^+\subset T$ normalizes the unipotent radical $U$.)
Also, in the third factor we have deliberately written
$sNs^{-1} \cap U$ instead of $K \cap sNs^{-1} \cap U$ (they are the same since $s(N \cap U)s^{-1}$ lies in $N \cap U$ and therefore in $K$). The upshot is 
the K\"{u}nneth formula (\ref{kun}) also applies to each of the graded algebras $\gr A^{(m)}$. 

\begin{lem}\label{achk}
Hypothesis (\ref{van}) of Appendix A is fulfilled for all $n >\dim U$. That is, the restriction map
$$
\Ext_{\gr A^{(m)}}^n(E, \gr \check{V}) \longrightarrow \Ext_{\gr A^{(m+1)}}^n(E, \gr \check{V})
$$
vanishes for all $m$. 
\end{lem}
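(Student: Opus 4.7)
The plan is to combine the Künneth decomposition (\ref{kun}) with a simple degree count, Lemma \ref{ext} for the $T$ and $U$ factors, and the standard cohomological-dimension bound on the $\bar{U}$ factor. After the perturbation of $\omega$ made in Section \ref{formula}, each of the three graded factor algebras is a polynomial ring over $E$; in particular $\gr\Lambda(\bar{U}_m)$, where I write $\bar{U}_m := K\cap sN^{p^m}s^{-1}\cap\bar{U}$, is a polynomial ring in $\dim\bar{U}=\dim U$ variables, hence has global dimension $\dim U$, so $\Ext^a_{\gr\Lambda(\bar{U}_m)}(E,M)=0$ for every module $M$ and every $a>\dim U$.

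Write $T_m:=(N\cap T)^{p^m}$ and $U_m:=sN^{p^m}s^{-1}\cap U$ for the remaining Iwahori factors at level $m$. Since $N\cap T$ and $N\cap U$ are saturated (which is exactly what is used in the paragraph just before the lemma to pull the $p^m$-powers outside), one checks $T_m^{\,p}=T_{m+1}$ and $U_m^{\,p}=U_{m+1}$. The Künneth formula (\ref{kun}), applied to $\gr A^{(m)}$ and to $\gr A^{(m+1)}$ and functorial in the inclusion $A^{(m+1)}\subset A^{(m)}$, then produces
$$
\Ext^n_{\gr A^{(m)}}(E,\gr\check{V})\;\cong\;\bigoplus_{a+b+c=n}\Ext^a_{\gr\Lambda(\bar{U}_m)}(E,\gr\check{V})\otimes_E\Ext^b_{\gr\Lambda(T_m)}(E,E)\otimes_E\Ext^c_{\gr\Lambda(U_m)}(E,E),
$$
with the restriction map $\Ext^n_{\gr A^{(m)}}\to\Ext^n_{\gr A^{(m+1)}}$ acting, on each summand, as the tensor product of the three component restriction maps.

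Now fix $n>\dim U$. Any summand with $a>\dim U$ already vanishes on the source side by the cohomological dimension bound on $\gr\Lambda(\bar{U}_m)$. In every remaining summand we have $a\leq\dim U$, hence $b+c=n-a\geq 1$, so at least one of $b,c$ is positive. Lemma \ref{ext}, applied to $H=T_m$ when $b>0$ and to $H=U_m$ when $c>0$ (using $T_m^{\,p}=T_{m+1}$ and $U_m^{\,p}=U_{m+1}$), shows that the corresponding tensor factor is killed by restriction, so the whole tensor product is killed. Summing over all triples $(a,b,c)$ with $a+b+c=n$ gives the desired vanishing.

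The one verification that requires a little care is the naturality of the Künneth isomorphism (\ref{kun}) with respect to shrinking $N$ from level $m$ to level $m+1$: one needs the shuffle homotopy equivalence between the external tensor product of the three bar resolutions and the bar resolution of the tensor-product algebra to intertwine, in each slot, the restriction map between bar resolutions furnished by Lemma \ref{he}. This is routine from the explicit shuffle formula but is the only nontrivial step; everything else is formal from the two vanishing statements already established.
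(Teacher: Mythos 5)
Your proof is correct and takes essentially the same approach as the paper: split into Künneth summands, use Lemma~\ref{ext} on any summand with $b>0$ or $c>0$, and observe that the only remaining summand $(n,0,0)$ vanishes outright because $n>\dim U$ exceeds the cohomological dimension of the $\bar{U}$-factor. The paper simply phrases the dichotomy as ``$(n,0,0)$ versus everything else'' rather than ``$a>\dim U$ versus $a\le\dim U$,'' but these cover exactly the same cases, and your side remarks on $T_m^p=T_{m+1}$, $U_m^p=U_{m+1}$ and the naturality of the shuffle map are precisely what the paper takes for granted from the surrounding discussion.
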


\begin{proof}
To make the argument more transparent we will only give the details for $m=0$. In the K\"{u}nneth formula (\ref{kun}) for $\Ext_{\gr A^{(0)}}^n(E, \gr \check{V})$
we consider the restriction map to $A^{(1)}$ on the $(a,b,c)$-summand. When $b>0$ or $c>0$ we get zero by lemma \ref{ext}. What is left is to see what happens to restriction on the $(n,0,0)$-summand
$$
\Ext^n_{\gr \Lambda(K \cap sNs^{-1} \cap \bar{U})}(E,\gr \check{V}).
$$
However, this summand itself is zero when $n>\text{rank}(K \cap sNs^{-1} \cap \bar{U})=\dim \bar{U}=\dim U$, cf. \cite[V.2.2]{Laz65} and \cite[Thm.~27.1]{Sch11}.
\end{proof}

Theorem \ref{koz} applies and yields the following key result.

\begin{prop}\label{dimu}
Let $n>\dim U$ be arbitrary. Then the restriction map 
$$
\Ext_{\Lambda(K \cap sNs^{-1})}^n (E, \check{V}) \overset{}{\longrightarrow} \Ext_{\Lambda (K \cap sN^{p^m}s^{-1})}^n (E, \check{V})
$$ 
vanishes for all $m>\text{amp}(\check{V})+n$. (Here $\text{amp}(\cdot)$ is the amplitude introduced in the Appendix.)
\end{prop}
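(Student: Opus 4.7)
The plan is to apply Theorem \ref{koz} of Appendix A directly to the chain of filtered algebras
$$
\cdots \subset A^{(m+1)} = \Lambda(K\cap sN^{p^{m+1}}s^{-1}) \subset A^{(m)} = \Lambda(K\cap sN^{p^m}s^{-1}) \subset \cdots \subset A^{(0)}=\Lambda(K\cap sNs^{-1}),
$$
equipped with the filtrations $\Fil^i A^{(m)}$ inherited from the $p$-valuation $\omega_s$ on $K\cap sN^{p^m}s^{-1}$ (where the $p$-valuation $\omega_s$ has been perturbed, if needed, so that each graded algebra $\gr A^{(m)}$ is a polynomial ring over $E$, hence Koszul). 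The module $\check{V}$ carries the filtration $\Fil^i\check{V}=\Fil^i A^{(m)}\cdot\check{V}$ and $\gr\check{V}$ is the associated graded, on which the discussion in Section \ref{formula} applies.

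First I would verify the remaining hypotheses of Theorem \ref{koz}: the filtrations are exhaustive and separated (since $\Fil^i\Lambda(N) = J_{i/A}$ comes from the Lazard filtration, and $\check{V}$ is finite dimensional so the induced filtration is finite), and the graded algebras $\gr A^{(m)}$ are Koszul polynomial algebras. Lemma \ref{achk} provides exactly hypothesis (\ref{van}): for every $n>\dim U$ and every $m\geq 0$ the restriction map
$$
\Ext_{\gr A^{(m)}}^n(E,\gr\check{V})\longrightarrow \Ext_{\gr A^{(m+1)}}^n(E,\gr\check{V})
$$
vanishes.

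With these inputs in place, Theorem \ref{koz} of Appendix A produces a convergent, functorial spectral sequence
$$
E_1^{i,j}=\Ext_{\gr A^{(m)}}^{i,j}(E,\gr\check{V})\Longrightarrow \Ext_{A^{(m)}}^{i+j}(E,\check{V})
$$
together with the transfer statement: the restriction maps on the graded side vanishing in degree $n$, combined with the Koszul simplification of the internal grading (so that only a single bidegree contributes in each total degree $n$), forces the restriction map on the ungraded side to shift the induced filtration by one step. Iterating this shift $m-\text{amp}(\check{V})-n$ times and using that the filtration on $\Ext_{A^{(m)}}^n(E,\check{V})$ is trivial beyond level $\text{amp}(\check{V})+n$, one concludes that the composed restriction map $\Ext_{A^{(0)}}^n(E,\check{V})\to \Ext_{A^{(m)}}^n(E,\check{V})$ is zero as soon as $m>\text{amp}(\check{V})+n$.

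The main subtlety I expect is bookkeeping the filtration shifts uniformly in $s\in S^+$: each iteration of restriction increases the filtration index by one on the target, and this needs to terminate before exceeding $\text{amp}(\check{V})+n$. That $\text{amp}(\check{V})$ can be chosen independently of $s$ (so that the same $m$ works for all $s$) is what makes the subsequent application to $I_K^G\check{V}$ possible, but for this isolated proposition it suffices to cite Theorem \ref{koz} directly with the verified hypothesis.
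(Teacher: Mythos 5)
Your proposal is correct and follows essentially the same route as the paper: perturb $\omega_s$ to make $\gr A^{(m)}$ a polynomial (hence Koszul) algebra, invoke Lemma \ref{achk} to verify hypothesis (\ref{van}), and apply Theorem \ref{koz}. The extra details you give on the spectral-sequence mechanism are just an unpacking of Theorem \ref{koz} and do not change the argument.
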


\begin{proof}
We may arrange for $\gr \Lambda(K \cap sNs^{-1})$ to be Koszul (e.g., a polynomial algebra) by perturbing $\omega$ if necessary. We checked in lemma \ref{achk} that hypothesis 
(\ref{van}) is satisfied for $n>\dim U$ so Theorem \ref{koz} applies and gives the vanishing of $\Ext_{A}^n(E, \check{V})\rightarrow \Ext_{A^{(m)}}^n(E, \check{V})$ as long as 
$m>\text{amp}(\check{V})+n$. Here we use the notation from the Appendix. In particular $A=A^{(0)}$, cf. the paragraph containing (\ref{van}). 
\end{proof}

\section{Proof of the main result}

By Lemma \ref{twist} we may reformulate Proposition \ref{dimu} as saying that the map
$$
\Ext_{\Lambda(N \cap K^s)}^n(E,\check{V}^s) \longrightarrow \Ext_{\Lambda(N^{p^m} \cap K^s)}^n(E,\check{V}^s) 
$$
vanishes for $n>\dim U$ and $m>\text{amp}(\check{V}^s)+n$. The amplitude $\text{amp}(\check{V})$ in Proposition \ref{dimu} is computed relative to $\omega_s$, and therefore
coincides with $\text{amp}(\check{V}^s)$ which is relative to the $\omega$ from $N$. Note that all our $\Lambda$'s satisfy $\Fil^0\Lambda=\Lambda$ so $\nu\geq 0$ (in the notation of the appendix); in other words the amplitude $\text{amp}(M)$ is at most $\mu$ (the first index for which $\Fil^{\mu}M=0$). 

\begin{lem}\label{unif}
$\text{amp}(\check{V}^s)$ is uniformly bounded in $s \in S^+$.
\end{lem}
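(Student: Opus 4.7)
The plan is as follows. By Lemma \ref{twist}, it suffices to bound the amplitude of $\check V$ as a filtered $\Lambda(K \cap sNs^{-1})$-module with respect to the $\omega_s$-filtration, with $K \cap sNs^{-1}$ acting through the natural $K$-action. I will work exclusively with this reformulation.

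Next I would invoke the Iwahori factorization of $K \cap sNs^{-1}$ from Section \ref{formula}: the middle factor is $N \cap T$, which is contained in $N$, and the right-hand factor equals $K \cap sNs^{-1} \cap U = s(N \cap U)s^{-1}$, which is also contained in $N \cap U \subset N$ by property (ii) of the Iwahori factorization. Both of these factors therefore act trivially on $\check V$ by the standing hypothesis that $N$ acts trivially on $V$. Using the tensor factorization (\ref{lamb}) (extended to $s$-conjugates at the beginning of Section \ref{formula}), the graded module $\gr \check V$ is supported entirely on the $\bar U$-factor; concretely, it decomposes as $\gr \check V|_{\bar U} \boxtimes E \boxtimes E$, where the last two tensor factors are trivial modules over $\gr \Lambda(N \cap T)$ and $\gr \Lambda(K \cap sNs^{-1} \cap U)$ respectively. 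In particular, the filtration $\Fil^\bullet \check V$ is already determined by the restriction of $\Fil^\bullet \Lambda(K \cap sNs^{-1} \cap \bar{U})$ acting on $\check V$, since any product $(\kappa_1-1)\cdots(\kappa_t-1)$ with some $\kappa_i$ in the $T$- or $U$-factor automatically kills $\check V$.

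Finally, the action of $K \cap sNs^{-1} \cap \bar{U}$ on $\check V$ through the natural $K$-action factors through its image in the fixed finite group $K/K_{r_V}$. Consequently the induced algebra map $\Lambda(K \cap sNs^{-1} \cap \bar{U}) \to \End_E(\check V)$ lands in the finite-dimensional $E$-subalgebra $A_V := \mathrm{image}\bigl(E[K/K_{r_V}] \to \End_E(\check V)\bigr)$, which is independent of $s$. Therefore $\Fil^\bullet \check V$ is a decreasing filtration of the fixed finite-dimensional space $\check V$ by $A_V$-submodules, admitting at most $\dim_E \check V + 1$ distinct terms, and $\gr \check V$ has at most $\dim_E \check V$ nonzero graded pieces. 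Since $\text{amp}(\check V^s)$ is a structural invariant of $\gr \check V$ bounded by such combinatorial data (as defined in Appendix A, and in any case bounded by $\mu$), we conclude that $\text{amp}(\check V^s) \leq C$ for some constant $C$ depending only on $V$, $N$, and $\omega$, uniformly in $s \in S^+$.

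The main obstacle in this plan is the clean reduction to the $\bar{U}$-factor, which relies on the Iwahori factorization together with the triviality of $N$ on $V$; once this is accomplished, uniformity follows immediately from the fact that the $\bar{U}$-action factors through a fixed finite-dimensional algebra and $\check V$ has fixed $E$-dimension.
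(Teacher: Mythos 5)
Your plan diverges from the paper's, and it has a genuine gap at the final step. Let me first record the difference, then explain the gap.

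The paper's proof is shorter and does not pass through the Iwahori factorization at all: it observes directly that $N\cap K^s\subset N$, so that (with the filtration of $\Lambda(N\cap K^s)$ being the restriction of the one on $\Lambda(N)$ coming from $\omega$) it suffices to produce an $A=A_{N,V}$, independent of $s$, with $\Fil^i\Lambda(N)\check V=0$ for $i\geq A$. That vanishing is obtained by comparing the $\Fil$-filtration with the $\m_{\Lambda(N)}$-adic filtration and invoking Nakayama, using only that $\check V$ is finite-dimensional. Your route is different: you reduce to the $\bar U$-factor via the Iwahori decomposition and the triviality of the $T$- and $U$-actions. That reduction is sound and is in the spirit of what the paper does earlier (in the K\"unneth step), but it is not how the paper argues for \emph{this} lemma, and you then try to close the argument by a counting device rather than a Nakayama-type comparison.

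The gap is in the last step: you conclude from ``$\Fil^\bullet\check V$ is a decreasing filtration of a fixed finite-dimensional space by $A_V$-submodules, with at most $\dim_E\check V$ nonzero graded pieces'' that $\text{amp}(\check V^s)$ is uniformly bounded. That inference is false. By the definition in Appendix A, $\text{amp}(M)=\mu-\nu$, where $\mu$ is the least index with $\Fil^\mu M=0$ and $\nu$ the largest with $\Fil^\nu M=M$. A filtration can have very few distinct terms and still have arbitrarily large amplitude: for instance a two-step filtration $M=\Fil^0=\cdots=\Fil^{k}\supsetneq 0=\Fil^{k+1}=\cdots$ has one nonzero graded piece and amplitude $k+1$ for any $k$. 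So ``at most $\dim_E\check V$ nonzero graded pieces'' does not bound $\mu-\nu$, and ``in any case bounded by $\mu$'' begs exactly the question you need to answer, namely a uniform bound on $\mu$. To make your approach work you would have to control, uniformly in $s$, the \emph{indices} at which the filtration $\Fil^i\Lambda(K\cap sNs^{-1}\cap\bar U)\check V$ jumps — that is, to show that the $\omega_s$-degrees of the relevant group elements that act nontrivially on $\check V$ are bounded independently of $s$. That is precisely the non-obvious uniformity statement, and your argument never establishes it; the fact that the action factors through a fixed finite algebra $A_V$ constrains which subspaces occur in the filtration but not at which $\omega_s$-level they occur.
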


\begin{proof}
We recall that $\text{amp}(\check{V}^s)$ is relative to the filtration $\Fil^i\check{V}^s:=\Fil^i\Lambda(N \cap K^s)\check{V}$ where the filtration of 
$\Lambda(N \cap K^s)$ is defined with respect to (the restriction of) $\omega$. As $N \cap K^s \subset N$ it is enough to observe that
$\Fil^i \Lambda(N)\check{V}=0$ for $i\geq A$ for some $A=A_{N,V}>0$ depending only on $N$ and $V$ (not $s$); where again  
$\Fil^i \Lambda(N)$ is relative to $\omega$. Assuming $N$ is pro-$p$ the vanishing for $i \geq A$ follows from Nakayama's lemma by comparing 
$\Fil^i \Lambda(N)\check{V}$ to the filtration $\frak{m}_{\Lambda(N)}^i\check{V}$ which must be stationary since $V$ is finite-dimensional, 
cf. \cite[Rem.~28.1]{Sch11}.
\end{proof}

From the last paragraph of section \ref{first} we have the isomorphism
$$
\Ext^n_{\Lambda(N)}(E,I_K^G \check{V}) \overset{\sim}{\longrightarrow} {\prod}_{s\in S^+/S(\OO)} \ind_{NK^s}^K \Ext^n_{\Lambda(N \cap K^s)}(E, \check{V}^s)
$$
which is compatible with the restriction maps on either side. We conclude that
$$
\Ext^n_{\Lambda(N)}(E,I_K^G \check{V}) \overset{}{\longrightarrow} \Ext^n_{\Lambda(N^{p^m})}(E,I_K^G \check{V})
$$
vanishes for $n>\dim U$ and $m>A_{V,N}+n$ arbitrary (here $A_{V,N}$ is the uniform bound for 
$\text{amp}(\check{V}^s)$ found in the proof of lemma \ref{unif}). In particular this proves the vanishing of $S^n(\ind_K^GV)$ for $n>\dim U$, which is our main Theorem \ref{main} from the introduction.

\begin{rem}\label{trivial}
For the trivial weight $V=E$ one can strengthen this significantly. Indeed $\forall n>0$ the map
$$
\Ext_{\Lambda(K \cap sNs^{-1})}^n (E,E) \overset{}{\longrightarrow} \Ext_{\Lambda (K \cap sN^{p}s^{-1})}^n (E,E)
$$ 
vanishes for $N$ small. Thus $S^n(\ind_K^G1)=0$ for $n>0$ as mentioned in the introduction. The reason is that
one can arrange for $K \cap sNs^{-1}$ to be equi-$p$-valued (when equipped with the valuation $\omega_s$) by shrinking $N$ and therefore by Lazard's computation of its mod $p$ cohomology algebra \cite[p.~183]{Laz65}
we have 
$$
\Ext_{\Lambda(K \cap sNs^{-1})}^n (E,E)=H^n(K \cap sNs^{-1},E)=\joinrel=\joinrel=\bigwedge^n \Hom(K \cap sNs^{-1},E).
$$
By saturation elements of $\Hom(K \cap sNs^{-1},E)$ vanish upon restriction to $K \cap sN^{p}s^{-1}$.
\end{rem}

\section{A few unaddressed questions}\label{nv}

In this section we sharpen the expectation that $S^{\dim(G/B)}(\ind_K^G V)$ is non-trivial for sufficiently non-degenerate weights $V$. We first recall the notion of regularity introduced in \cite{HV12}. For a weight $V$ one defines $M_V$ to be the (unique) largest standard Levi subgroup for which $M_V(k)$ preserves the line $V^{U(k)}$. 

\begin{defn}
Let $P \supset B$ be a parabolic subgroup defined over $F$ with standard Levi factor $M$. We say that $V$ is $M$-regular if $M_V \subset M$.
\end{defn}

For instance, all weights are $G$-regular. For $\GL_2$ the weight $V$ is $T$-regular exactly when $\dim V>1$. We believe $T$-regularity is enough to guarantee non-vanishing, but we have no evidence. 

\begin{quest}\label{qone}
Is $S^{\dim(G/B)}(\ind_K^G V)\neq 0$ for all $T$-regular weights $V$?
\end{quest}

We have not been able to show this even in the case of $\GL_2(\Q_p)$. Using \cite[Thm.~1.2]{HV12} and \cite[Thm.~4.7(ii)]{Koh17} one can at least prove the non-vanishing of 
$S^{\dim(G/B)}$ on principal series representations $\ind_K^G V \otimes_{\mathcal{H}_G(V),\chi} E$ where the eigensystems $\chi: \mathcal{H}_G(V)\rightarrow E$ factor through the Satake homomorphism $\mathcal{H}_G(V) \longrightarrow \mathcal{H}_T(V_{\bar{U}(k)})$. 

Initially we had hoped to prove the following more precise bound in Theorem \ref{main}: Suppose $P\supset B$ is a standard parabolic subgroup defined over $F$ with standard Levi factor $M_P\supset T$ and unipotent radical $U_P$. Let $V$ be a weight for which $M_P(k)$ preserves the line $V^{U(k)}$. In other words $M_P \subset M_V$. 

\begin{quest}\label{qtwo}
Is it true that $S^i(\ind_K^G V)=0$ for all $i>\dim(U_P)$?
\end{quest}

This is consistent with our results for $P=G$ and $P=B$. What seems to go wrong if one tries to mimic our argument is that in the Iwahori factorization of $N$ relative to $P$, the middle factor $N \cap M_P$ is not fixed under conjugation by $s \in S^+$. 

If question \ref{qtwo} has a positive answer, it seems natural to strengthen question \ref{qone} as follows. 

\begin{quest}\label{qthree}
Is $S^{\dim(U_P)}(\ind_K^G V)\neq 0$ for weights $V$ with $M_V=M_P$?
\end{quest}

This more refined question was brought to our attention by Niccolo' Ronchetti.



\appendix

\section{A spectral sequence for $\Ext$ over filtered rings}

Let $k$ be a field\footnote{We apologize for the change of notation. Now $k$ is the coefficient field (formerly $E$); not the residue field of $F$. }, and $A$ an augmented $k$-algebra with augmentation map $\epsilon:A \rightarrow k$. We assume $A$ is filtered; meaning it comes with a decreasing filtration 
$A=\Fil^0A \supset \Fil^1A \supset \cdots$ by two-sided ideals $\Fil^iA$ which satisfy the following two properties:

\begin{itemize}
\item[(1)] $\Fil^iA \times \Fil^jA \rightarrow \Fil^{i+j}A$ for all $i,j\in \N$;
\item[(2)] $\epsilon(\Fil^1A)=0$.
\end{itemize}
The associated graded $k$-algebra $\gr A=\oplus_{i\geq 0} \gr^i A=\oplus_{i\geq 0} \Fil^iA/\Fil^{i+1}A$ inherits an augmentation map 
$\epsilon$ by projecting to the first component $A/\Fil^1A$. Note that a filtration-preserving morphism of augmented $k$-algebras
$A'\rightarrow A$ induces a morphism of graded augmented $k$-algebras $\gr A' \rightarrow \gr A$.

\subsection{The filtered bar resolution}

Below we will study the functorial properties of $\Ext_A^i(k,-)$ as we vary $A$. For that we will need a projective resolution of $k$ which is functorial in $A$; the bar resolution
$B_{\bullet}A$,
$$
\cdots \overset{d_3}\longrightarrow A \otimes_k A \otimes_k A \overset{d_2}\longrightarrow A \otimes_k A \overset{d_1}{\longrightarrow} A \overset{\epsilon}{\longrightarrow} k \longrightarrow 0.
$$
Let $B_nA=A^{\otimes(n+1)}$ be the $(n+1)$-fold tensor product, and define the differentials by letting $d_0=\epsilon$ and for $n>0$ let $d_n: B_nA \rightarrow B_{n-1}A$ 
be the map which takes $a_0 \otimes a_1 \otimes \cdots \otimes a_n$ to 
$$
(-1)^n(a_0\otimes \cdots \otimes a_{n-1})\epsilon(a_n)+\sum_{i=0}^{n-1} (-1)^i (a_0\otimes\cdots\otimes a_ia_{i+1}\otimes \cdots \otimes a_n).
$$
Note that $B_nA$ is a free left $A$-module (via the first factor $a_0$) of rank $\dim_k A^{\otimes n}$. It is easily checked that $d_{n-1}\circ d_n=0$, and in fact the complex 
is exact (taking  $a_0 \otimes a_1 \otimes \cdots \otimes a_n$ to $1 \otimes a_0 \otimes a_1 \otimes \cdots \otimes a_n$ defines a contracting homotopy). We conclude that $B_{\bullet}A$
is a resolution of $k$ by free left $A$-modules (usually of infinite rank) which is functorial in $A$; a morphism of augmented $k$-algebras $A' \rightarrow A$ induces a 
morphism of complexes $B_{\bullet}A' \rightarrow B_{\bullet}A$ in the obvious way. This is the reason we choose to work with the bar resolution throughout, as opposed to any projective resolution of $k$.

Since our algebra $A$ is filtered, $B_{\bullet}A$ is filtered as well by endowing each $B_nA$ with the tensor product filtration. That is
$$
\Fil^i B_nA={\sum}_{i_0+i_1+\cdots+i_n=i}\Fil^{i_0}A \otimes_k \Fil^{i_1}A \otimes_k \cdots \otimes_k \Fil^{i_n}A.
$$
This is a (not necessarily free) $A$-submodule. It is trivial to check that $d_n: \Fil^i B_nA \rightarrow \Fil^i B_{n-1}A$, i.e. $B_{\bullet}A$ is a filtered complex
(if $i_n>0$ we have $\epsilon(a_n)=0$, and otherwise $i_0+\cdots +i_{n-1}=i$). Let $\gr B_{\bullet}A$ be the associated graded complex whose $n$th term is 
$\gr B_nA=\oplus_{i\geq 0} \gr^i B_nA$. The next result identifies it with the bar resolution for $\gr A$ which is graded in a natural way. 

\begin{lem}\label{bgr}
$\gr B_{\bullet}A=B_{\bullet}(\gr A)$; more precisely for any $i \geq 0$ the natural map
$$
\gr^i B_n(\gr A)={\bigoplus}_{i_0+i_1+\cdots+i_n=i} \gr^{i_0}A \otimes_k \gr^{i_1}A \otimes_k \cdots \otimes_k \gr^{i_n}A \longrightarrow \gr^i B_nA
$$
is an isomorphism of left $\gr A$-modules commuting with the differentials.
\end{lem}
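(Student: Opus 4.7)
The plan is to construct the natural map explicitly, verify well-definedness by hand, establish bijectivity via vector-space splittings, and then check the formal compatibilities with the differential and with the $\gr A$-action.

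For the construction, for each multi-index $(i_0,\ldots,i_n)$ with $\sum_j i_j = i$, the inclusion $\Fil^{i_0}A \otimes_k \cdots \otimes_k \Fil^{i_n}A \hookrightarrow \Fil^i B_n A$ followed by the projection $\Fil^i B_n A \twoheadrightarrow \gr^i B_n A$ kills each subspace of the form $\Fil^{i_0}A \otimes \cdots \otimes \Fil^{i_j+1}A \otimes \cdots \otimes \Fil^{i_n}A$, since those lie in $\Fil^{i+1}B_n A$. Hence it descends to a $k$-linear map $\gr^{i_0}A \otimes_k \cdots \otimes_k \gr^{i_n}A \to \gr^i B_n A$, and summing over multi-indices with $\sum i_j = i$ produces the natural map of the statement.

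For bijectivity, since $k$ is a field I would choose $k$-linear splittings of the exact sequences $0 \to \Fil^{p+1}A \to \Fil^pA \to \gr^pA \to 0$, obtaining a decomposition $A = \bigoplus_{p \geq 0} A_p$ with $A_p \simeq \gr^pA$ and $\Fil^pA = \bigoplus_{q \geq p} A_q$. Applying this factor-wise, the key technical verification is the identification
$$\Fil^i B_n A \;=\; \bigoplus_{p_0+\cdots+p_n \geq i} A_{p_0} \otimes_k \cdots \otimes_k A_{p_n}.$$
The inclusion $\supseteq$ follows by redistributing any tuple $(p_0,\ldots,p_n)$ with $\sum p_j \geq i$ to a tuple $(i_0,\ldots,i_n)$ with $i_j \leq p_j$ and $\sum i_j = i$, which is possible because the deficit $\sum p_j - i \geq 0$ can be subtracted coordinate-wise without producing negative entries. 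The inclusion $\subseteq$ follows by expanding a pure tensor from a single summand $\Fil^{i_0}A \otimes \cdots \otimes \Fil^{i_n}A$ in the chosen basis and observing that only components $A_{p_0} \otimes \cdots \otimes A_{p_n}$ with $p_j \geq i_j$ can appear, so $\sum p_j \geq i$. Passing to $\gr^i$ and identifying $A_p$ with $\gr^p A$ recovers an inverse to the natural map; the resulting isomorphism is canonical even though the construction of the inverse used a splitting.

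Finally, the left $\gr A$-action on $\gr B_n A$ is via the first tensor factor and respects the filtration by axiom~(1), so $\gr A$-linearity is automatic. For the differential, the interior multiplications $a_j \otimes a_{j+1} \mapsto a_j a_{j+1}$ carry $\Fil^{i_j}A \otimes \Fil^{i_{j+1}}A$ into $\Fil^{i_j+i_{j+1}}A$ by axiom~(1) and descend to multiplication in $\gr A$ on graded pieces; the terminal term involving $\epsilon(a_n)$ vanishes on any component with $i_n > 0$ by axiom~(2) and recovers the augmentation of $\gr A$ on the $i_n=0$ part. The step that I expect to require the most care is the direct-sum identification of $\Fil^i B_n A$ in the previous paragraph: by definition the tensor-product filtration is a \emph{sum} of tensor products, not manifestly a direct sum, and a priori does not respect the choice of splitting, so both containments have to be argued explicitly. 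Once this is established, the remaining verifications are routine diagram-chasing.
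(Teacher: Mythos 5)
There is a genuine gap in the passage that you yourself flag as requiring the most care: the asserted decomposition
$$A = \bigoplus_{p\geq 0}A_p, \qquad \Fil^p A = \bigoplus_{q\geq p}A_q$$
is false whenever the filtration is infinite, i.e.\ whenever $\Fil^pA\neq 0$ for all $p$. Choosing splittings $\Fil^pA=A_p\oplus\Fil^{p+1}A$ only produces, for each finite $N$, a decomposition $A=A_0\oplus\cdots\oplus A_N\oplus\Fil^{N+1}A$; letting $N\to\infty$ does \emph{not} give $A=\bigoplus_p A_p$, since an element of $A$ need not have a finite expansion. For instance with $A=E[\![X]\!]$ and the $X$-adic filtration one gets $\bigoplus_p A_p = E[X]\subsetneq A$. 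This is exactly the situation in the body of the paper, where $A=\Lambda(N)$ has an infinite filtration. As a result your key identity $\Fil^iB_nA=\bigoplus_{\sum p_j\geq i}A_{p_0}\otimes\cdots\otimes A_{p_n}$ fails: already for $n=0$ it would say $\Fil^iA=\bigoplus_{p\geq i}A_p$, and the $\subseteq$ direction breaks down because a pure tensor $a_0\otimes\cdots\otimes a_n$ with $a_j\in\Fil^{i_j}A$ cannot in general be expanded as a \emph{finite} sum of homogeneous pieces.

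The fix is precisely the maneuver the paper uses, and it is close in spirit to yours: fix $i$ and work only with the finite decomposition $A=\Delta^0\oplus\cdots\oplus\Delta^i\oplus\Fil^{i+1}A$, i.e.\ lump everything of filtration degree $>i$ into a single summand $D^{i+1}:=\Fil^{i+1}A$. Then $B_nA$ is a genuine finite direct sum of the pieces $D^{i_0}\otimes\cdots\otimes D^{i_n}$ with each $i_j\leq i+1$, and one shows that $\Fil^{i+1}B_nA$ is exactly the part of this sum indexed by $i_0+\cdots+i_n>i$. This recovers $\gr^iB_nA=\bigoplus_{i_0+\cdots+i_n=i}\Delta^{i_0}\otimes\cdots\otimes\Delta^{i_n}$, which is the target of the natural map since $\Delta^j\simeq\gr^jA$. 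Your redistribution argument for one of the inclusions is sound and reappears verbatim in this corrected version; it is only the infinite-sum bookkeeping that needs to be replaced by the truncated one. The remarks on $\gr A$-linearity and on the differential (interior multiplications respecting filtration by axiom (1), the last term controlled by axiom (2)) are correct and agree with how the paper treats those points implicitly.
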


\begin{proof}
For each $i$ choose a $k$-subspace $\Delta^i \subset \Fil^i A$  such that $\Fil^i A=\Delta^i \oplus \Fil^{i+1}A$. For the rest of the proof fix an $i \geq 0$ and decompose
$A$ as a direct sum of subspaces $A=\Delta^0\oplus\cdots \oplus \Delta^i\oplus \Fil^{i+1}A$. We introduce a more uniform notation 
by letting $D^j \subset \Fil^jA$ denote $\Delta^j$ when $j\leq i$ and $D^{i+1}=\Fil^{i+1}A$. Then 
$B_nA$ decomposes as
$$
B_nA=\big({\bigoplus}_{i_0+\cdots+i_n\leq i }\Delta^{i_0}\otimes_k\cdots \otimes_k \Delta^{i_n}\big)\oplus \big({\bigoplus}_{i_0+\cdots+i_n> i }'D^{i_0}\otimes_k\cdots \otimes_k D^{i_n}\big)
$$
where the prime in the second $\bigoplus'$ indicates we are only summing over $i_0,\ldots,i_n\leq i+1$ with sum $>i$. Clearly this sum $\bigoplus'$
is contained in $\Fil^{i+1}B_nA$. Conversely, noting that $\Fil^jA=\Delta^j \oplus \cdots \oplus \Delta^i\oplus \Fil^{i+1}A$ for $j\leq i+1$ it follows immediately that
$\Fil^{i+1}B_nA$ lies in $\bigoplus'$. Indeed, if $j_0+\cdots +j_n=i+1$,
$$
\Fil^{j_0}A \otimes_k \cdots \otimes_k \Fil^{j_n}A={\bigoplus}_{j_0\leq i_0\leq i+1,\ldots, j_n\leq i_n\leq i+1} D^{i_0}\otimes_k \cdots \otimes_k D^{i_n},
$$
and $i_0+\cdots+i_n\geq i+1$ for $i$'s in that range. We conclude that for each $i\geq 0$ we have
$$
B_nA=\big({\bigoplus}_{i_0+\cdots+i_n\leq i }\Delta^{i_0}\otimes_k\cdots \otimes_k \Delta^{i_n}\big)\oplus \Fil^{i+1}B_nA.
$$
In particular $\gr^i B_nA$ is the direct sum over $i_0+\cdots+i_n=i$. Since $\Delta^i \simeq \gr^iA$ this finishes the proof. 
\end{proof}

Ronchetti pointed me to the reference \cite{Sjo74} which contains results of the same flavor. For instance our lemma \ref{bgr} above appears to be closely related to \cite[Lem.~10]{Sjo74}. Furthermore our lemma \ref{grhom} below is exactly \cite[Lem.~16]{Sjo74} -- in a different notation. Instead of comparing notations we found it easier and more convenient to just include the proofs in this Appendix.  

\subsection{Graded $R\Hom$}

For a left $A$-module $M$ we consider the complex $R\Hom_A(k,M)=\Hom_A(B_{\bullet}A,M)$ of $k$-vector spaces (of $A$-modules if $A$ is commutative). More precisely
$$
\cdots \longrightarrow 0 \longrightarrow \Hom_A(A,M) \overset{\partial_1}{\longrightarrow} \Hom_A(A\otimes_kA,M) \overset{\partial_2}{\longrightarrow} \Hom_A(A\otimes_kA \otimes_k A,M) \overset{\partial_3}{\longrightarrow} \cdots
$$
whose $i$th cohomology group is $\Ext_A^i(k,M)$. Observe that a $k$-algebra map $A' \rightarrow A$ gives a morphism of complexes
$R\Hom_A(k,M)\rightarrow R\Hom_{A'}(k,M)$ which we will refer to as the restriction map (at least when $A'\subset A$ is a subalgebra). Taking cohomology yields maps
$\Ext_A^i(k,M)\rightarrow \Ext_{A'}^i(k,M)$ for each $i$.

Now suppose $M$ is equipped with a decreasing filtration by $A$-submodules $\Fil^iM \supset \Fil^{i+1}M\supset \cdots$ such that
$\Fil^iA \times \Fil^jM \rightarrow \Fil^{i+j}M$ for all $i \in \N$ and $j \in \Z$ (for more flexibility we allow filtrations of $M$ to be indexed by $\Z$); we will often take 
$\Fil^iM:=(\Fil^iA)M$ but this restriction is unnecessary. We let $\gr M=\oplus_{i\in\Z}\gr^iM$ be the associated graded $\gr A$-module.

In this situation $R\Hom_A(k,M)$ becomes a filtered complex by defining ($\forall s \in \Z$)
$$
\Fil^s \Hom_A(B_nA,M):=\{\phi\in \Hom_A(B_nA,M): \phi(\Fil^i B_nA)\subset \Fil^{i+s}M \y \forall i\geq 0\}.
$$
(This is a decreasing filtration by $k$-subspaces, compatible with the differentials $\partial$.) We will always assume the filtration on $M$ satisfies $\Fil^iM=0$ for $i>>0$ and
$\Fil^iM=M$ for $i<<0$. Then clearly also $\Fil^s \Hom_A(B_nA,M)=0$ for $s>>0$; however it may not be exhaustive. We let
$$
{^*\Hom}_A(B_nA,M):=\bigcup_{s\in \Z}\Fil^s \Hom_A(B_nA,M)=\{\phi: \phi(\Fil^i B_nA)=0 \y \forall i>>0\}.
$$
This is all of $\Hom_A(B_nA,M)$ if the filtration on $A$ is finite, i.e. $\Fil^iA=0$ for $i$ sufficiently large. Similarly, we let
$$
R\Hom_{\gr A}(k,\gr M):=\Hom_{\gr A}(B_{\bullet}(\gr A),\gr M)\simeq \Hom_{\gr A}(\gr B_{\bullet} A, \gr M),
$$
with $i$th cohomology group $\Ext_{\gr A}^i (k,\gr M)$. For a fixed $n\geq 0$ and $s \in \Z$ we consider the subspace of homogeneous degree $s$ maps
$$
\Hom_{\gr A}^s(\gr B_n A, \gr M):=\{\psi\in \Hom_{\gr A}(\gr B_n A, \gr M): \psi(\gr^i B_nA)\subset \gr^{i+s}M \y \forall i\geq 0\}.
$$ 
They clearly form a direct sum in $\Hom_{\gr A}(\gr B_n A, \gr M)$ which we will denote by
$$
{^*\Hom}_{\gr A}(\gr B_n A, \gr M):=\bigoplus_{s\in \Z} \Hom_{\gr A}^s(\gr B_n A, \gr M)=\{\psi: \psi(\gr^i B_nA)=0 \y \forall i>>0\}.
$$
(For the inclusion $\supset$ write $\psi=\sum\psi_s$ where $\psi_s$ is defined as the projection $\pi_{i+s}\circ \psi$ on $\gr^i B_nA$. One easily checks $\psi_s$ is $\gr A$-linear
and the vanishing condition on $\psi$ guarantees that $\psi_s=0$ for $|s|>>0$.)

\begin{lem}\label{grhom}
For every $s \in \Z$ there is a natural isomorphism of $k$-vector spaces
$$
\gr^s \Hom_A(B_nA,M) \overset{\sim}{\longrightarrow} \Hom_{\gr A}^s(\gr B_n A, \gr M).
$$
\end{lem}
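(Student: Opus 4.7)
The plan is to construct the isomorphism as a principal symbol map and reduce the verification to a purely $k$-linear statement about filtered vector spaces. The crucial simplification is that $B_nA = A \otimes_k A^{\otimes n}$ is a free left $A$-module on $1 \otimes A^{\otimes n}$, so $\Hom_A(B_nA, M) \overset{\sim}{\longrightarrow} \Hom_k(A^{\otimes n}, M)$ via $\phi \mapsto \phi|_{1 \otimes A^{\otimes n}}$, and the filtration on the left-hand side transfers to $\{\phi_0: \phi_0(\Fil^j A^{\otimes n}) \subset \Fil^{j+s}M \text{ for all } j\geq 0\}$ where $A^{\otimes n}$ carries the tensor product filtration. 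The analogous statement on the graded side identifies $\Hom_{\gr A}^s(\gr B_nA, \gr M)$ with homogeneous degree $s$ maps $(\gr A)^{\otimes n} \to \gr M$.

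First, I would define the natural map by assigning to $\phi \in \Fil^s \Hom_A(B_nA, M)$ its symbol $\bar\phi$: on $\gr^i B_nA$, send the class of $x \in \Fil^i B_nA$ to the class of $\phi(x)$ in $\Fil^{i+s}M/\Fil^{i+s+1}M = \gr^{i+s}M$. Well-definedness, $\gr A$-linearity, and homogeneity of degree $s$ are immediate from the definitions, and the map is zero on $\Fil^{s+1}$, so it descends to $\gr^s \Hom_A(B_nA, M)$. Injectivity is then tautological: $\bar\phi = 0$ means $\phi(\Fil^i B_nA) \subset \Fil^{i+s+1}M$ for every $i$, i.e.\ $\phi \in \Fil^{s+1}$.

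For surjectivity, let $\psi \in \Hom_{\gr A}^s(\gr B_nA, \gr M)$. Since $\Fil^j M = 0$ for $j \gg 0$ by our standing assumption on $M$, $\psi$ is supported on $\gr^i B_nA$ for $i$ bounded above by some $I$, which is the truncation we exploit. Using the splittings $\Delta^j \subset \Fil^j A$ introduced in the proof of Lemma \ref{bgr} (so $\Fil^j A = \Delta^j \oplus \Fil^{j+1}A$ and $\Delta^j \cong \gr^j A$), I would decompose $A^{\otimes n}$ accordingly and define $\phi_0: A^{\otimes n} \to M$ summand-wise: on $\Delta^{i_1} \otimes \cdots \otimes \Delta^{i_n}$ with all $i_k \leq I$, choose any $k$-linear lift of the component of $\psi$ on $\gr^{i_1}A \otimes \cdots \otimes \gr^{i_n}A$ to $\Fil^{i_1+\cdots+i_n+s}M$, and set $\phi_0$ to zero on every summand involving a factor in $\Fil^{I+1}A$. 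Unpacking the filtration on $A^{\otimes n}$ in terms of this decomposition and tracking the degree shift $s$ should yield $\phi_0 \in \Fil^s$, with symbol equal to $\psi$ by construction and $\gr A$-linearity.

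The main obstacle is the bookkeeping for surjectivity, namely verifying that $\phi_0$ so constructed really sends $\Fil^j A^{\otimes n}$ into $\Fil^{j+s}M$ uniformly in $j$. This is where the hypothesis that $\Fil^\bullet M$ terminates (so $\psi$ has effectively bounded support) becomes essential: truncating the splitting at level $I$ and sending everything beyond it to zero introduces no error in matching the symbol. Without a finiteness assumption on $A$ itself, this truncation step is the only non-formal input, and the rest of the argument is a direct transcription of the proof of Lemma \ref{bgr} into $\Hom$-terms.
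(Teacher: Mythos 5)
Your proof is correct and follows essentially the same route as the paper: reduce to $\Hom_k(A^{\otimes n},M)$ using freeness of $B_nA$ over $A$, observe injectivity is tautological, and prove surjectivity by lifting $\psi$ degree-by-degree using the complement subspaces from the proof of Lemma~\ref{bgr} together with finiteness of $\Fil^{\bullet}M$ (which bounds the support of $\psi$, so one can send the deep tail of the filtration to zero). The only difference is cosmetic: the paper splits $V=A^{\otimes n}$ directly as a filtered $k$-vector space ($V=\nabla^0\oplus\cdots\oplus\nabla^i\oplus\Fil^{i+1}V$) rather than splitting each tensor factor and decomposing into $\Delta^{i_1}\otimes\cdots\otimes\Delta^{i_n}$, which sidesteps exactly the tensor-product bookkeeping you flag as the main remaining obstacle.
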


\begin{proof}
Any $\phi \in \Fil^s \Hom_A(B_nA,M)$ defines a $\psi \in \Hom_{\gr A}^s(\gr B_n A, \gr M)$ in the obvious way; and $\psi=0$ exactly when $\phi\in 
\Fil^{s+1} \Hom_A(B_nA,M)$. In other words, there is a $k$-linear injection 
$$
\gr^s \Hom_A(B_nA,M) \hookrightarrow \Hom_{\gr A}^s(\gr B_n A, \gr M).
$$
To show this is surjective observe that $B_nA=A \otimes_k V$ where $V$ is a filtered $k$-vector space ($A^{\otimes n}$) such that $\Fil^i B_nA=\sum_{p+q=i}F^pA \otimes_k \Fil^qV$ for all $i \geq 0$. Therefore, as filtered $k$-vector spaces,
$$
\Hom_A(B_nA,M)\simeq \Hom_k(V,M).
$$
Similarly $\gr B_nA=B_n(\gr A)=\gr A \otimes_k \gr V$ as graded $k$-vector spaces by Lemma \ref{bgr}. Thus
$$
\Hom_{\gr A}(\gr B_n A, \gr M) \simeq \Hom_k(\gr V, \gr M),
$$
and it remains to observe that $\gr^s \Hom_k(V,M) \hookrightarrow \Hom_k^s(\gr V, \gr M)$ is an isomorphism. Indeed, as in the proof of \ref{bgr}, we may choose a sequence of $k$-subspaces $\nabla^i\subset \Fil^i V$ such that $V=\nabla^0\oplus \cdots \oplus \nabla^i \oplus \Fil^{i+1} V$. Let $\psi \in \Hom_k^s(\gr V, \gr M)$. Since the filtration on $M$ is assumed to be finite, we may choose $i$ large enough that $\psi(\gr^{i+1}V)=0$. Now define a preimage $\phi\in \Fil^s \Hom_k(V,M)$ of $\psi$ as follows. Let $\phi=0$ on $\Fil^{i+1} V$,
and on $\nabla^j\simeq \gr^j V$ for $j=0,\ldots,i$ take $\phi$ to be a lift $\nabla^j \rightarrow \Fil^{j+s}M$ of the given $\psi: \gr^j V \rightarrow \gr^{j+s}M$.
\end{proof}

\subsection{A topological variant}

We now assume $A$ is a pseudocompact $k$-algebra. More precisely, we assume the ideals 
$\Fil^i A$ are open and form a neighborhood basis at $0$, and $A \overset{\sim}{\longrightarrow} \varprojlim A/\Fil^i A$ as topological rings with Artinian (discrete) quotients
$A/\Fil^i A$. We let $\CC_A$ denote the abelian category of pseudocompact $A$-modules (i.e., complete Hausdorff topological left $A$-modules which are inverse limits of discrete finite length $A$-modules). As is well-known, $\CC_A$ has exact inverse limits and enough projectives. Similarly we let $\CC_{\gr A}$ denote the abelian category of $\Z$-graded $\gr A$-modules, with morphisms $\Hom_{\CC_{\gr A}}={^*\Hom}_{\gr A}$ (the sums of homogeneous maps as above). Any projective $\gr A$-module with a $\Z$-grading is projective in $\CC_{\gr A}$ and vice versa (cf. \cite[p.~289]{FF74} for commutative rings). 

The bar resolution has a topological variant $\hat{B}_{\bullet}A$ obtained by replacing $\otimes_k$ with completed tensor products $\hat{\otimes}_k$ everywhere.
For instance $\hat{B}_1A=A \hat{\otimes}_k A=\varprojlim_{i,j}A/\Fil^i A \otimes_k A/\Fil^j A$, cf. \cite[p.~446]{Bru66}. More generally 
$\hat{B}_nA=A^{\hat{\otimes}(n+1)}$. The differential $d_n$ extends to $\hat{B}_nA$ by continuity and this defines a resolution of $k$ in $\CC_A$,
$$
\cdots \overset{d_3}\longrightarrow A\hat{ \otimes}_k A \hat{\otimes}_k A \overset{d_2}\longrightarrow A \hat{\otimes}_k A \overset{d_1}{\longrightarrow} A \overset{\epsilon}{\longrightarrow} k \longrightarrow 0.
$$
Moreover, $\hat{B}_nA$ is projective in $\CC_A$ by \cite[Cor.~3.3]{Bru66} (being an inverse limit of free $A/\Fil^i A$-modules). 

Furthermore, suppose $M$ is a discrete finite length $A$-module with exhaustive and separated filtration $\Fil^i M$ as above.

\begin{lem}\label{starhom}
We have the following natural isomorphisms of complexes.
\begin{itemize}
\item[(1)] $R\Hom_{\CC_A}(k,M):=\Hom_{\CC_A}(\hat{B}_{\bullet}A,M)={^*\Hom}_A(B_{\bullet}A,M)$;
\item[(2)] $R\Hom_{\CC_{\gr A}}(k,\gr M):=\Hom_{\CC_{\gr A}}(\gr B_{\bullet}A,\gr M)={^*\Hom}_{\gr A}(gr B_{\bullet}A,\gr M)$.
\end{itemize}
\end{lem}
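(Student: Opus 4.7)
The plan is to prove each part by unwinding the definition of the completed bar construction: part (1) follows from the universal property of pseudocompact completion, while part (2) is essentially formal once Lemma \ref{bgr} is in hand.

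For (1), I would argue as follows. Since $M$ is discrete and of finite length, any continuous morphism $\phi\in\Hom_{\CC_A}(\hat B_nA,M)$ has open kernel. The topology on $\hat B_nA=A^{\hat\otimes(n+1)}$ has a neighborhood basis of zero given by the submodules $\Fil^i\hat B_nA$ (the closures of the $\Fil^iB_nA$, equivalently the kernels of the canonical projections $\hat B_nA\twoheadrightarrow B_nA/\Fil^iB_nA$), so $\phi$ must annihilate some $\Fil^i\hat B_nA$ and factor through the discrete quotient $\hat B_nA/\Fil^i\hat B_nA\simeq B_nA/\Fil^iB_nA$. Restricting along the canonical map $B_nA\hookrightarrow\hat B_nA$ then produces an $A$-linear map $B_nA\to M$ annihilating $\Fil^iB_nA$, i.e.\ an element of ${^*\Hom}_A(B_nA,M)$. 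Conversely, any $\phi\in{^*\Hom}_A(B_nA,M)$ vanishes on some $\Fil^iB_nA$ and hence factors through the discrete finite-dimensional quotient $B_nA/\Fil^iB_nA$; composing with the projection from $\hat B_nA$ yields a continuous lift. The two assignments are mutually inverse, and the resulting bijection is one of complexes because the differentials on $\hat B_{\bullet}A$ are by definition the continuous extensions of those on $B_{\bullet}A$.

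For (2), the statement is essentially tautological given what has already been established. By Lemma \ref{bgr} we may identify $\gr B_{\bullet}A$ with $B_{\bullet}(\gr A)$, and by the very definition of $\CC_{\gr A}$ its morphism sets are precisely ${^*\Hom}_{\gr A}$, the sums of homogeneous $\gr A$-linear maps. The only substantive point to record is that each $\gr B_nA\simeq \gr A\otimes_k(\gr A)^{\otimes n}$ is a graded free $\gr A$-module (with basis a homogeneous $k$-basis of $(\gr A)^{\otimes n}$), hence projective in $\CC_{\gr A}$, so that $\gr B_{\bullet}A$ genuinely computes $R\Hom_{\CC_{\gr A}}(k,\gr M)$.

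The main obstacle, and the only place where real care is required, is the topological bookkeeping in (1): one must verify that the tensor-product topology on $\hat B_nA$ admits the $\Fil^i\hat B_nA$ as a fundamental neighborhood basis at zero, and that the natural map $B_nA/\Fil^iB_nA\to \hat B_nA/\Fil^i\hat B_nA$ is a bijection. Both assertions are standard for pseudocompact $k$-algebras---they reduce to the facts that each $A/\Fil^iA$ is a finite-dimensional $k$-vector space and that finite tensor products of such are already complete---but they should be made explicit in a full write-up. With this in place, the compatibility of the bijection in (1) with the differentials $\partial_n$ is automatic, and part (2) amounts to pure bookkeeping with the graded structure.
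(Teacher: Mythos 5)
Your approach is essentially the same as the paper's, and your overall logic is sound: part (2) is indeed tautological given Lemma~\ref{bgr} and the definition of morphisms in $\CC_{\gr A}$, and part (1) reduces to identifying when a map $B_nA\to M$ extends continuously to $\hat B_nA$. But the step you flag at the end — "one must verify that the tensor-product topology on $\hat B_nA$ admits the $\Fil^i\hat B_nA$ as a fundamental neighborhood basis" — is not mere bookkeeping that can be deferred; it is the entire content of part (1), and you have not actually supplied it. Moreover, the reduction you suggest (that it follows from finite-dimensionality of $A/\Fil^iA$ and completeness of finite tensor products) does not hit the nail on the head. The pseudocompact topology on $\hat B_nA = A^{\hat\otimes(n+1)}$ is defined by the kernels $J_i := \ker\bigl(A^{\otimes(n+1)} \to (A/\Fil^iA)^{\otimes(n+1)}\bigr)$, and what must be verified is that the two filtrations $(J_i)$ and $(\Fil^iB_nA)$ on $B_nA$ are \emph{cofinal}, so that they induce the same topology and the same completion. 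This is a purely combinatorial observation, not a completeness statement: one has $J_i\subset\Fil^iB_nA$ trivially, and $\Fil^{i(n+1)}B_nA\subset J_i$ by a pigeonhole argument (if $i_0+\cdots+i_n = i(n+1)$ then some $i_j\ge i$). Once this cofinality is in place, your factor-through-a-discrete-quotient argument goes through verbatim and establishes the bijection; without it, the claim that $\Fil^i\hat B_nA$ is a neighborhood basis is unproved. So: fill in this two-line cofinality computation and the proof is complete and matches the paper's.
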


\begin{proof}
Part (2) follows straight from the definition of morphisms in $\CC_{\gr A}$. For part (1) we have to show that $\phi \in \Hom_A(B_nA,M)$ extends to a continuous map
$\hat{B}_nA\rightarrow M$ if and only if $\phi(\Fil^i B_nA)=0$ for large $i$. Note that $\hat{B}_nA$ is the completion of $B_nA$ for the topology defined by the submodules
$$
J_i:=\ker \big(A^{\otimes(n+1)} \longrightarrow (A/\Fil^i A)^{\otimes(n+1)}\big)=(\Fil^iA \otimes_k\cdots \otimes_k A)+\cdots +(A \otimes_k \cdots \otimes_k \Fil^iA).
$$
Obviously $J_i \subset \Fil^i B_nA$, and conversely $\Fil^{i(n+1)}B_nA \subset J_i$. (If $i_0+\cdots+ i_n=i(n+1)$ we must have some $i_j\geq i$ 
and thus $\Fil^{i_0}A \otimes_k \Fil^{i_1}A \otimes_k \cdots \otimes_k \Fil^{i_n}A$ is contained in $J_i$. This proves the lemma.
\end{proof}

Note that if $A$ is Noetherian then the cohomology of $R\Hom_{\CC_A}(k,M)$ computes $\Ext_A^i(k,M)$, and similarly for $R\Hom_{\CC_{\gr A}}(k,\gr M)$
(assuming $\gr A$ is Noetherian): $k$ has a resolution by finite free $A$-modules, and any $A$-linear map 
$A^r \rightarrow A^s$ is automatically continuous. 

\subsection{The spectral sequence for $\Ext$}

Combining Lemma \ref{grhom} and \ref{starhom} we arrive at the following result.

\begin{thm}\label{rhom}
With notation as above, there is a natural isomorphism of graded complexes 
$$
\gr R\Hom_{\CC_A}(k,M)\overset{\sim}{\longrightarrow} R\Hom_{\CC_{\gr A}}(k,\gr M).
$$
\end{thm}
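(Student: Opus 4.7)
The plan is simply to assemble the three preceding lemmas; no new input is required. First, by Lemma~\ref{starhom}, both sides of the claimed isomorphism are computed by the bar resolution: the left-hand side is the associated graded of the filtered complex ${^*\Hom}_A(B_\bullet A, M) = \bigcup_{s\in\Z}\Fil^s \Hom_A(B_\bullet A, M)$, while the right-hand side is ${^*\Hom}_{\gr A}(\gr B_\bullet A, \gr M) = \bigoplus_{s\in\Z} \Hom^s_{\gr A}(\gr B_\bullet A, \gr M)$. Here I have tacitly invoked Lemma~\ref{bgr} to identify $\gr B_\bullet A$ with the bar complex $B_\bullet(\gr A)$.

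Next, I compare the two term-wise. Since $\Fil^i M = 0$ for $i \gg 0$, the filtration on ${^*\Hom}_A(B_n A, M)$ satisfies $\Fil^s = 0$ for $s \gg 0$ and is exhaustive by its very definition, so
$$
\gr\, {^*\Hom}_A(B_n A, M) = \bigoplus_{s \in \Z} \gr^s \Hom_A(B_n A, M).
$$
Lemma~\ref{grhom} then provides, for each $s$, a canonical isomorphism $\gr^s \Hom_A(B_n A, M) \overset{\sim}{\longrightarrow} \Hom^s_{\gr A}(\gr B_n A, \gr M)$, and summing over $s$ yields the desired term-wise identification $\gr\, {^*\Hom}_A(B_n A, M) \overset{\sim}{\longrightarrow} {^*\Hom}_{\gr A}(\gr B_n A, \gr M)$.

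Finally, I would check compatibility with the differentials. The bar differentials $d_n$ on $B_\bullet A$ preserve the tensor product filtration (this is the observation made just before Lemma~\ref{bgr}), and their associated graded maps are precisely the bar differentials of $\gr A$, which is really the content of Lemma~\ref{bgr}. Consequently the codifferentials $\partial_n$ induced on ${^*\Hom}_A(B_\bullet A, M)$ preserve each $\Fil^s$, and on the associated graded they recover the codifferentials of the bar complex for $\gr A$ with coefficients in $\gr M$. Naturality in both $A$ and $M$ is automatic from the explicit construction. I expect this last compatibility to be the only mildly fiddly step, but it should be entirely mechanical since everything is built from explicit tensor-product formulas that commute with passing to the associated graded.
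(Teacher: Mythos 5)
Your proof is correct and follows the paper's own route exactly: the theorem is just the conjunction of Lemmas~\ref{bgr}, \ref{grhom}, and \ref{starhom}, with a (routine but worth recording) check that the term-wise isomorphism of Lemma~\ref{grhom} intertwines the codifferentials, which it does because the bar differentials preserve the tensor-product filtration and their associated graded maps are the bar differentials for $\gr A$.
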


We extract a spectral sequence relating the $\Ext$-functors over $A$ and $\gr A$.

\begin{cor}\label{spec}
Assume $\dim_k \Ext_{\CC_{\gr A}}^n(k,\gr M)$ is finite for all $n$, and zero for $n$ sufficiently large. Then there is a convergent spectral sequence, which is functorial in $A$,
$$
E_1^{i,j}=\Ext_{\CC_{\gr A}}^{i,j}(k,\gr M) \Longrightarrow \Ext_{\CC_A}^{i+j}(k,M).
$$
(The definition of the internal grading $\Ext_{\CC_{\gr A}}^{i,j}$ is recalled in the proof -- see equation (\ref{intern}) below.)
\end{cor}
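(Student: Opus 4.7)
The plan is to apply the standard spectral sequence of a filtered complex to $C^\bullet := R\Hom_{\CC_A}(k,M) = {^*\Hom}_A(B_\bullet A, M)$ from Lemma \ref{starhom}(1), equipped with the decreasing filtration $\Fil^s$ described in the text. First I would verify the setup: since the bar differentials preserve $\Fil^i B_\bullet A$ (as already noted), the induced differentials on $C^\bullet$ preserve $\Fil^s C^\bullet$, so this is a genuine filtered complex. Moreover $\Fil^s C^n = 0$ for $s \gg 0$ (because $\Fil^s M = 0$ for $s \gg 0$) and $C^n = \bigcup_s \Fil^s C^n$ by the very definition of ${^*\Hom}$.

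Next I would invoke Theorem \ref{rhom} to identify the $E_0$-page with $R\Hom_{\CC_{\gr A}}(k,\gr M)$. Since by Lemma \ref{starhom}(2) the latter equals $\bigoplus_s \Hom^s_{\gr A}(\gr B_\bullet A, \gr M)$ as complexes, its cohomology carries an internal grading that I would codify by the definition
\begin{equation}\label{intern}
\Ext^{i,j}_{\CC_{\gr A}}(k,\gr M) \;:=\; H^{i+j}\bigl(\Hom^{i}_{\gr A}(\gr B_\bullet A, \gr M)\bigr),
\end{equation}
so that $i$ is the internal (filtration) degree and $i+j$ is the cohomological Ext degree. With this convention the $E_1$-page of the spectral sequence of the filtered complex $C^\bullet$ is literally $E_1^{i,j} = \Ext^{i,j}_{\CC_{\gr A}}(k,\gr M)$, abutting to $\Ext^{i+j}_{\CC_A}(k,M)$.

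The remaining point is convergence. The hypothesis that $\dim_k \Ext^n_{\CC_{\gr A}}(k,\gr M)$ is finite for all $n$ and vanishes for $n \gg 0$ implies that for each total degree $n$ there are only finitely many nonzero entries $E_1^{i,j}$ with $i+j = n$, and that the $E_1$-page is concentrated in a finite range of total degrees. Standard theorems on filtered complexes then give strong convergence: the induced filtration on $\Ext^n_{\CC_A}(k,M)$ is finite, hence separated and exhaustive, with $E_\infty^{i,j}$ identified as its associated graded. Functoriality in $A$ is formal, since a filtration-preserving augmented $k$-algebra map $A' \to A$ induces a filtered chain map $B_\bullet A' \to B_\bullet A$, hence a filtered morphism $C^\bullet(A) \to C^\bullet(A')$, and therefore a morphism of associated spectral sequences. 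I expect the only mildly subtle point to be reconciling the bigrading convention (\ref{intern}) with the standard filtered-complex conventions so that the identification of $E_1$ is literal and not merely up to reindexing; all other ingredients reduce to Theorem \ref{rhom} and bookkeeping.
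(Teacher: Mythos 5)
Your proof matches the paper's essentially verbatim: both consider the filtered complex $K={^*\Hom}_A(B_\bullet A,M)$, apply the standard spectral sequence of a filtered complex, identify $E_1^{i,j}$ via Theorem \ref{rhom} with the internal grading on $\Ext_{\CC_{\gr A}}^{i+j}(k,\gr M)$ coming from the direct-sum decomposition of Lemma \ref{starhom}(2), deduce convergence from the finiteness of the $E_1$-page, and note that functoriality is inherited from the functoriality of the bar resolution. The only cosmetic difference is that the paper phrases $E_1^{i,j}$ as $\gr^i\Ext_{\CC_{\gr A}}^{i+j}(k,\gr M)$ while you write it directly as $H^{i+j}(\Hom^i_{\gr A}(\gr B_\bullet A,\gr M))$, but these coincide by Lemma \ref{grhom}.
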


\begin{proof}
Consider the spectral sequence of the filtered complex $K={^*\Hom}_A(B_{\bullet}A,M)$, cf. \cite[p.~203]{GM03}. It consists of bigraded groups
$E_r=\oplus_{i,j}E_r^{i,j}$ starting with $E_1^{i,j}=H^{i+j}(\gr^i K)$ where $\gr K=\oplus_{i \in \Z}\gr^i K$ is the associated graded complex, which in our case equals
${^*\Hom}_{\gr A}(gr B_{\bullet}A,\gr M)$ by Theorem \ref{rhom}. Its cohomology $\Ext_{\CC_{\gr A}}^{i+j}(k,\gr M)$ is graded and $E_1^{i,j}$ sits as the $i$th graded piece;
\begin{equation}\label{intern}
E_1^{i,j}=\gr^i \Ext_{\CC_{\gr A}}^{i+j}(k,\gr M)=: \Ext_{\CC_{\gr A}}^{i,j}(k,\gr M).
\end{equation}
On the infinite sheet we have groups $E_{\infty}=\oplus_{i,j}E_{\infty}^{i,j}$ with $E_{\infty}^{i,j}=\gr^i H^{i+j}(K)$, where the filtration on $H^n(K)=\Ext_{\CC_A}^n(k,M)$ is given by the images of the maps $H^n(\Fil^i K) \rightarrow H^n(K)$. Convergence follows easily from the finiteness assumptions: $E_1^{i,j}$ is nonzero only for finitely many pairs $(i,j)$, so eventually the incoming and outgoing differentials vanish at $(i,j)$ for $r>>0$.
\end{proof}

In most situations of interest $\gr A$ will be Noetherian, and $k$ of finite projective dimension over $\gr A$. Then the finiteness conditions in \ref{spec} are automatically satisfied for all finite-dimensional $M$. 

Following \cite{PT02} Ronchetti has recently established a spectral sequence akin to the one in corollary \ref{spec} above, albeit in a more restricted setting, cf. \cite[Thm.~3]{Ron18}. He considers $\Z_p[\![U(\OO)]\!]$ filtered by powers of the augmentation ideal and relates $\Ext_{\gr \Z_p[\![U(\OO)]\!]}^*(\gr \Z_p, \gr M)$ to $\Ext_{\Z_p[\![U(\OO)]\!]}^*(\Z_p,M)$ for finitely generated $\Z_p$-modules $M$ with a continuous $B(\OO)$-action.

\subsection{An application}

Let $A' \rightarrow A$ be a filtration-preserving map of augmented $k$-algebras, both assumed to be pseudocompact. As pointed out earlier this gives rise to a morphism of filtered complexes $R\Hom_{\CC_A}(k,M)\rightarrow R\Hom_{\CC_{A'}}(k,M)$, and a fortiori a map of spectral sequences -- which we will assume converge.

\begin{cor}\label{fil}
Fix an $n$ and suppose the restriction map $\Ext_{\CC_{\gr A}}^n(k,\gr M)\rightarrow \Ext_{\CC_{\gr A'}}^n(k,\gr M)$ is zero. Then 
the map $\Ext_{\CC_{A}}^n(k,M)\rightarrow \Ext_{\CC_{A'}}^n(k,M)$ is zero on all graded pieces. That is, for all $i \in \Z$ it maps 
$$
\Fil^i \Ext_{\CC_{A}}^n(k,M)\rightarrow \Fil^{i+1} \Ext_{\CC_{A'}}^n(k,M).
$$
\end{cor}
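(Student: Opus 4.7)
The plan is to apply the naturality of the spectral sequence from Corollary \ref{spec} to the morphism of filtered complexes $R\Hom_{\CC_A}(k,M)\to R\Hom_{\CC_{A'}}(k,M)$ induced by $A'\to A$, and then propagate the vanishing hypothesis from the $E_1$-page to the $E_\infty$-page by induction on the page number.

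First I would observe that the hypothesis implies vanishing on each $E_1$-term in total degree $n$. Indeed, by (\ref{intern}) we have $E_1^{i,j}=\gr^i \Ext_{\CC_{\gr A}}^{i+j}(k,\gr M)$, so for $i+j=n$ the group $E_1^{i,j}(A)$ is a subquotient of $\Ext_{\CC_{\gr A}}^n(k,\gr M)$ and the naturally induced map $E_1^{i,j}(A)\to E_1^{i,j}(A')$ is the map on $\gr^i$ of the restriction map $\Ext_{\CC_{\gr A}}^n(k,\gr M)\to \Ext_{\CC_{\gr A'}}^n(k,\gr M)$, which is zero by assumption.

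Next I would argue by induction on $r\geq 1$ that the induced map $E_r^{i,j}(A)\to E_r^{i,j}(A')$ is zero for every pair with $i+j=n$. The inductive step is formal: a morphism of spectral sequences sends $\ker d_r$ to $\ker d_r$ and $\operatorname{im} d_r$ to $\operatorname{im} d_r$, so if the map on $E_r^{i,j}$ is zero then so is the induced map on the subquotient $E_{r+1}^{i,j}=\ker d_r^{i,j}/\operatorname{im} d_r^{i-r,j+r-1}$. Since the spectral sequence converges, for fixed $(i,j)$ with $i+j=n$ we have $E_r^{i,j}=E_\infty^{i,j}$ for $r\gg 0$, and hence the map $E_\infty^{i,j}(A)\to E_\infty^{i,j}(A')$ vanishes as well.

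Finally, I would translate this back to the filtration on abutments. By the convergence statement in Corollary \ref{spec}, $E_\infty^{i,j}=\gr^i \Ext_{\CC_A}^{i+j}(k,M)$, where the filtration on $\Ext^n$ is defined via the images of $H^n(\Fil^i K)\to H^n(K)$, and similarly for $A'$. Restriction is filtration-preserving, so it already carries $\Fil^i \Ext_{\CC_A}^n(k,M)$ into $\Fil^i \Ext_{\CC_{A'}}^n(k,M)$. The vanishing of the induced map on $\gr^i$, i.e.\ on $E_\infty^{i,n-i}$, is precisely the statement that this refines to $\Fil^i \Ext_{\CC_A}^n(k,M)\to \Fil^{i+1} \Ext_{\CC_{A'}}^n(k,M)$, as claimed. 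The only mildly delicate point is the inductive step on pages, but it is entirely formal once one has a map of spectral sequences, so there is no genuine obstacle here beyond bookkeeping.
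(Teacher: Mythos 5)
Your proposal is correct and follows essentially the same route as the paper's own argument: read off the vanishing on the $E_1$-page from the hypothesis via (\ref{intern}), propagate to $E_\infty$ using that a morphism of spectral sequences kills each subsequent subquotient (the paper states this tersely as ``$f_{\infty}^{i,j}$ is zero since $f_1^{i,j}=0$'', which is exactly your page-by-page induction), and then translate the vanishing on $E_\infty^{i,n-i}=\gr^i\Ext^n$ into the filtration-shift statement.
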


\begin{proof}
The map between the first sheets $f_1^{i,j}: E_1^{i,j}\rightarrow E_1'^{i,j}$ comes from the restriction maps
$$
\Ext_{\CC_{\gr A}}^{i+j}(k,\gr M) \longrightarrow \Ext_{\CC_{\gr A'}}^{i+j}(k,\gr M)
$$
by taking the $i$th graded piece. Analogously the map $f^n: \Ext_{\CC_A}^n(k,M)\rightarrow \Ext_{\CC_{A'}}^n(k,M)$ is the obvious map obtained by taking the cohomology of 
the map of filtered complexes above $K \rightarrow K'$. The map $f^n$ preserves filtrations and the induced map on graded pieces ($n=i+j$)
$$
E_{\infty}^{i,j}=\gr^i \Ext_{\CC_A}^{i+j}(k,M) \overset{f^{i+j}}{\longrightarrow} \gr^i \Ext_{\CC_{A'}}^{i+j}(k,M)=E_{\infty}'^{i,j}
$$
coincides with the map between the infinite sheets $f_{\infty}^{i,j}$ which is zero since $f_1^{i,j}=0$ by assumption.
\end{proof}

Now suppose instead of a single $A'$ we have a whole system of pseudocompact filtered $k$-algebras $A^{(m)}$ along with filtration-preserving maps of augmented $k$-algebras
$$
A=:A^{(0)} \longleftarrow A^{(1)}  \longleftarrow \cdots  \longleftarrow A^{(m)}  \longleftarrow \cdots.
$$
(Typically the $A^{(m)}$ will be subalgebras of $A$.) Suppose furthermore that all the graded restriction maps vanish for a given $n$. I.e.,
\begin{equation}\label{van}
\Ext_{\CC_{\gr A}}^n(k,\gr M)\overset{0}{\longrightarrow} \Ext_{\CC_{\gr A^{(1)}}}^n(k,\gr M) \overset{0}{\longrightarrow} \cdots \overset{0}{\longrightarrow} \Ext_{\CC_{\gr A^{(m)}}}^n(k,\gr M) \overset{0}{\longrightarrow} \cdots.
\end{equation}
We arrive at one of the main results in this appendix, which we will strengthen in the next section. 

\begin{cor}\label{final}
Under the assumption (\ref{van}) for a given $n$,
$$
\varinjlim_m \Ext_{\CC_{A^{(m)}}}^n(k,M)=0.
$$
\end{cor}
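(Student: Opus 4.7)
My plan is to iterate Corollary~\ref{fil} and then exploit separation of the filtration at each stage. Hypothesis~(\ref{van}) says that for every $m\geq 0$ the one-step graded restriction map
$\Ext_{\CC_{\gr A^{(m)}}}^n(k,\gr M)\to \Ext_{\CC_{\gr A^{(m+1)}}}^n(k,\gr M)$ vanishes, so Corollary~\ref{fil} tells us that the corresponding single-step ungraded restriction map sends $\Fil^i$ into $\Fil^{i+1}$. Composing $m-m_0$ such maps then shows that
$$
\Ext_{\CC_{A^{(m_0)}}}^n(k,M)\longrightarrow \Ext_{\CC_{A^{(m)}}}^n(k,M)
$$
takes $\Fil^i$ into $\Fil^{i+m-m_0}$ for every $i\in\Z$ and every $m\geq m_0$.

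Next I would fix a class $\xi\in \varinjlim_m \Ext_{\CC_{A^{(m)}}}^n(k,M)$ and lift it to some $\xi_0\in \Ext_{\CC_{A^{(m_0)}}}^n(k,M)$. The key observation here is that the filtration on this abutment is exhaustive: any cohomology class is represented by a cocycle in ${}^*\Hom_{A^{(m_0)}}(B_\bullet A^{(m_0)},M)=\bigcup_s \Fil^s\Hom_{A^{(m_0)}}(B_\bullet A^{(m_0)},M)$, so $\xi_0$ lies in $\Fil^{a}\Ext_{\CC_{A^{(m_0)}}}^n(k,M)$ for some integer $a$ depending on $\xi_0$. By the previous paragraph, the image of $\xi_0$ at stage $m$ then lies in $\Fil^{a+m-m_0}\Ext_{\CC_{A^{(m)}}}^n(k,M)$.

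Finally, by the convergence of the spectral sequence of Corollary~\ref{spec} at stage $m$, the filtration on $\Ext_{\CC_{A^{(m)}}}^n(k,M)$ is also separated: there is an integer $I_m$ with $\Fil^{I_m}\Ext_{\CC_{A^{(m)}}}^n(k,M)=0$. Once $a+m-m_0\geq I_m$, the image of $\xi_0$ must vanish, forcing $\xi=0$ in the colimit. The delicate point — and what I expect to be the main obstacle — is verifying that some such $m$ exists, i.e.\ that $I_m$ does not outpace $m$ as $m$ grows. In practice this requires some uniform finiteness of the $E_1$-pages across the system $\{A^{(m)}\}$, and is made quantitative by the amplitude-type invariant appearing in the refinement given by Theorem~\ref{koz}.
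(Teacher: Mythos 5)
Your strategy is exactly the paper's: iterate Corollary~\ref{fil} to gain one filtration level per step, use exhaustiveness of the filtration on the abutment to place any lift in some $\Fil^a$, and then use separation to kill it after enough steps. The gap is precisely where you flag it, and you do not close it. You say that verifying $I_m$ does not outpace $m$ is ``the main obstacle'' and defer it to ``uniform finiteness of the $E_1$-pages'' and ``the amplitude-type invariant appearing in the refinement given by Theorem~\ref{koz}.'' That is the wrong place to look: Theorem~\ref{koz} controls a \emph{different} uniformity, namely uniformity in the class $c$ (how large $a$ can be), and it needs the Koszul hypothesis. Corollary~\ref{final} allows $m$ to depend on $c$ and needs no Koszulness at all.

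The missing observation is much simpler and makes $I_m$ constant in $m$. Let $\mu$ be the smallest integer with $\Fil^{\mu}M=0$. Then for \emph{every} $m$, the complex $K^{(m)}={}^*\Hom_{A^{(m)}}(B_\bullet A^{(m)},M)$ already satisfies $\Fil^{\mu}K^{(m)}=0$: any $\phi\in\Fil^{\mu}\Hom_{A^{(m)}}(B_nA^{(m)},M)$ sends $\Fil^0 B_nA^{(m)}=B_nA^{(m)}$ into $\Fil^{\mu}M=0$, hence is the zero map. Consequently $\Fil^{\mu}H^n(K^{(m)})=0$, i.e.\ $\Fil^{\mu}\Ext_{\CC_{A^{(m)}}}^n(k,M)=0$, with $\mu$ independent of $m$. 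Combined with your iterated Corollary~\ref{fil} step (which puts the image of $\xi_0$ into $\Fil^{a+m-m_0}$), the image vanishes as soon as $a+m-m_0\geq\mu$, which certainly happens for $m$ large enough. No appeal to convergence of the spectral sequence or to minimal resolutions is needed here; your argument would be complete once you replace the speculative last paragraph with this direct computation.
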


\begin{proof}
It suffices to show that any $c \in \Ext_{\CC_{A}}^n(k,M)$ maps to $0$ in $\Ext_{\CC_{A^{(m)}}}^n(k,M)$ for $m$ large enough (how large may depend on $c$). Keep the notation
$K={^*\Hom}_A(B_{\bullet}A,M)$. By definition of ${^*\Hom}$ the filtration on $K$ is exhaustive; $K=\cup_{s\in \Z}\Fil^s K$. Also, by definition of the graded cohomology
$H^n(\Fil^i K)\twoheadrightarrow \Fil^i H^n(K)$, so the filtration on $H^n(K)=\Ext_{\CC_{A}}^n(k,M)$ is also exhaustive. Thus any $c$ lies in $\Fil^i\Ext_{\CC_{A}}^n(k,M)$ for some $i$.
By repeated use of Corollary \ref{fil} restriction to $A^{(m)}$ takes 
$$
\Fil^i\Ext_{\CC_{A}}^n(k,M) \longrightarrow \Fil^{i+m}\Ext_{\CC_{A^{(m)}}}^n(k,M).
$$
It suffices to observe the target is zero as long as $\Fil^{i+m}M=0$ (indeed $\Fil^{\mu}M=0$ implies $\Fil^{\mu}K=0$, and hence
$\Fil^{\mu}H^n(K)=0$). As long as we take $m \geq \mu-i$ the class $c$ restricts to zero (note that $i$ may depend on $c$).
\end{proof}

The next and last section of this appendix deals with the uniformity of $m$. Under favorable circumstances one can choose an $m$ which is independent of $c$ and thereby show that 
the restriction map $\Ext_{\CC_{A}}^n(k,M) \rightarrow \Ext_{\CC_{A^{(m)}}}^n(k,M)$ vanishes; still assuming (\ref{van}) of course. This is a key point on which our paper relies.

\subsection{Minimal resolutions and Koszul algebras}

We assume $\gr A$ is Noetherian and $k \overset{\sim}{\longrightarrow} \gr^0 A$. Then the trivial module $k$ admits a minimal projective resolution 
$$
\cdots \longrightarrow P_2 \longrightarrow P_1 \longrightarrow P_0 \longrightarrow k \longrightarrow 0.
$$
Here each $P_n\simeq \gr A\otimes_k V_n$ for some graded finite-dimensional vector space $V_n$, say $\dim_k V_n=b_n$. Choosing a basis of homogeneous elements for $V_n$ 
we can identify $P_n\simeq (\gr A)^{b_n}$ in such a way that the standard basis vectors are homogeneous elements of various degrees. Then the minimal resolution becomes
$$
\cdots \overset{T_3}\longrightarrow (\gr A)^{b_2} \overset{T_2}\longrightarrow  (\gr A)^{b_1} \overset{T_1}{\longrightarrow} (\gr A)^{b_0}=\gr A \overset{\epsilon}\longrightarrow k \longrightarrow 0,
$$
where the differentials are left-multiplication by certain matrices $T_i \in M_{b_{i-1}\times b_i}(\gr A)$. Minimality refers to the fact that the entries of all the $T_i$ in fact lie in the augmentation ideal $(\gr A)_+=\oplus_{i>0}\gr^i A$. In other words that $P_{\bullet}\otimes_{\gr A}k$ has $0$-differentials so its terms give the cohomology. Similarly for 
$\Hom_{\gr A}(P_{\bullet},k)$. Recall that $\gr A$ is said to be a {\it{Koszul}} algebra if the entries of the $T_i$ all lie in $\gr^1 A$. A convenient reference for all this is 
\cite{Kra17}. Note however that our bigrading $\Ext_{\CC_{\gr A}}^{i,j}$ differs from his internal grading; our $\Ext^{i,j}$ would be $\Ext^{i+j,i}$ in Kr\"{a}hmer's notation. 

The minimal resolution is a resolution as graded modules, so all the $T_i$ preserve the grading. Let us focus on the first $T_1$ for a second; a row vector with $b_1$ entries in 
$(\gr A)_+$. Let $e \in  (\gr A)^{b_1}$ be one of the standard basis vectors. It has some degree $|e|=d$. Since $T_1$ is of degree zero $|T_1(e)|=d$. However, $T_1(e)$ is one of the entries of $T_1$ which are in $(\gr A)_+$. We infer that $d>0$. In other words that $(\gr A)^{b_1}$ has a basis consisting of elements of degree $\geq 1$ (of degree {\it{equal}} to $1$ if $\gr A$ is Koszul). More generally we obtain the following.

\begin{lem}\label{degbas}
$P_n\simeq (\gr A)^{b_n}$ has a $\gr A$-basis consisting of homogeneous elements all of degree $\geq n$ (all of degree exactly $n$ if $\gr A$ is Koszul). 
\end{lem}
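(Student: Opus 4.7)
The plan is to proceed by induction on $n$, using the $n=1$ case sketched just before the lemma as a template. The base case $n=0$ is immediate: $P_0=\gr A$ has the canonical basis $\{1\}$ in degree $0$, so both the inequality $|1|\geq 0$ and, in the Koszul case, the equality $|1|=0$ hold. For the inductive step I would fix a homogeneous basis $\{e_i^{(n)}\}$ of $P_n$ with each $|e_i^{(n)}|\geq n$ (respectively $=n$ in the Koszul case), pick an arbitrary homogeneous basis element $e$ of $P_{n+1}$ of some degree $d$, and read off the constraint on $d$ directly from the differential.

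Since $T_{n+1}$ preserves the grading, $T_{n+1}(e)$ is homogeneous of degree $d$. Expanding $T_{n+1}(e)=\sum_i c_i\, e_i^{(n)}$ with each $c_i\in \gr A$ homogeneous, every nonzero summand satisfies $d=|c_i|+|e_i^{(n)}|$. By the minimality hypothesis every nonzero $c_i$ lies in $(\gr A)_+$, so $|c_i|\geq 1$, and by the inductive hypothesis $|e_i^{(n)}|\geq n$. This forces $d\geq n+1$; in the Koszul case one moreover has $|c_i|=1$ and $|e_i^{(n)}|=n$, pinning $d$ down to exactly $n+1$.

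The one step that requires care — and which I expect is the only real obstacle — is ruling out $T_{n+1}(e)=0$, since otherwise the chain of equalities above applies to no term and gives no information. If $T_{n+1}(e)=0$ then $e\in \ker T_{n+1}=\text{im}\,T_{n+2}$, so $e=T_{n+2}(f)$ for some $f\in P_{n+2}$; but minimality forces the entries of $T_{n+2}$ into $(\gr A)_+$, whence $\text{im}\,T_{n+2}\subseteq (\gr A)_+\cdot P_{n+1}$. On the other hand, $e$ is a free basis vector of $P_{n+1}$, so its image in the quotient $P_{n+1}/(\gr A)_+ P_{n+1}\simeq k^{b_{n+1}}$ is nonzero — contradiction. (If the resolution happens to terminate at level $n+1$, the argument is even easier: $T_{n+1}$ is then injective, so $T_{n+1}(e)\neq 0$ automatically.) Everything else in the proof is bookkeeping of degrees, and closes the induction.
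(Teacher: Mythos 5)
Your proof is correct and follows essentially the same inductive strategy as the paper's: fix a homogeneous basis element $e$, observe that the differential preserves degree, and read off the bound on $|e|$ by expanding its image in terms of the (by induction) degree-bounded basis of the previous step, with the entries of the matrix sitting in $(\gr A)_+$ (or in $\gr^1 A$ in the Koszul case). The one place where you go beyond the paper is in explicitly ruling out $T_{n+1}(e)=0$: the paper's phrase ``since we know it is homogeneous, $|e|=|T_n(e)|\geq n$'' silently presumes $T_n(e)\neq 0$, and your Nakayama-style argument (if $T_{n+1}(e)=0$ then $e\in\ker T_{n+1}=\operatorname{im} T_{n+2}\subseteq(\gr A)_+P_{n+1}$, contradicting that $e$ is a free generator) is exactly the standard fact about minimal resolutions that closes this gap. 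So your write-up is, if anything, slightly more complete than the one in the paper; the only cosmetic differences are the choice of base case ($n=0$ versus the paper's $n=1$) and the indexing of the inductive step.
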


\begin{proof}
Induction on $n$. We did the case $n=1$ above. Let $n>1$ and consider $T_n: (\gr A)^{b_n}\rightarrow (\gr A)^{b_{n-1}}$. Let $e \in (\gr A)^{b_n}$ be the $j$th standard basis vector,
of some degree $|e|=d$. Then $T_n(e)$ is the $j$th column of the matrix $T_n=(a_{ij})$. That is
$$
T_n(e)=a_{1j}e_1+a_{2j}e_2+\cdots+a_{b_{n-1},j}e_{b_{n-1}}
$$
where all the $e_i$'s have degree $\geq n-1$ (with equality in the Koszul case) by induction and $a_{ij}\in (\gr A)_+$. This shows $T_n(e)$ is a sum of terms of degree $\geq n$. 
Since we know it is homogeneous, $|e|=|T_n(e)|\geq n$. In the Koszul case all terms of $T_n(e)$ have degree exactly $n$ by induction since all $a_{ij}\in \gr^1 A$.
\end{proof}

Now let $M$ be a filtered $A$-module as above, with associated graded $\gr A$-module $\gr M$. Since $P_{\bullet}$ and $\gr B_{\bullet} A$ are homotopy equivalent as graded resolutions of $k$, the grading of $\Ext_{\CC_{\gr A}}^n(k,\gr M)$ is also determined by that of $P_{\bullet}$. Thus by its minimality we have
$$
E_1^{i,j}=\Ext_{\CC_{\gr A}}^{i,j}(k,\gr M)=H^{i+j}(\Hom_{\gr A}^i(P_{\bullet},\gr M))=\Hom_{\gr A}^i(P_{i+j},\gr M).
$$
Let $\mu$ be the smallest integer for which $\Fil^{\mu}M=0$, and let $\nu$ be the largest integer for which $\Fil^{\nu}M=M$. In particular
$\gr^sM=0$ when $s$ lies outside the interval $[\nu,\mu]$. We define the {\it{amplitude}} of $M$ as the length of the filtration, i.e. $\text{amp}(M)=\mu-\nu$.

\begin{thm}\label{koz}
Suppose $\gr A$ is a \underline{Koszul} algebra and the vanishing hypothesis (\ref{van}) is satisfied for a given $n\geq 1$. Then the restriction map 
$$
\Ext_{\CC_A}^{n}(k,M) \longrightarrow \Ext_{\CC_{A^{(m)}}}^{n}(k,M)
$$
vanishes for $m>\text{amp}(M)+n$.
\end{thm}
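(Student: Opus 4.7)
My plan is to combine the Koszul hypothesis with an iterated application of Corollary \ref{fil}. The improvement over Corollary \ref{final} is that Koszulness confines the filtration on $\Ext^n_{\CC_A}(k, M)$ to a window of size at most $\text{amp}(M)$, producing a bound on $m$ that is independent of the particular class being restricted. First I would invoke Lemma \ref{degbas}: since $\gr A$ is Koszul, the minimal resolution $P_\bullet \to k$ has $P_n \simeq (\gr A)^{b_n}$ with a basis of homogeneous elements of degree \emph{exactly} $n$. Consequently
$$\Hom^i_{\gr A}(P_n, \gr M) \cong (\gr^{n+i} M)^{b_n},$$
which vanishes whenever $n+i$ lies outside $[\nu, \mu - 1]$, the support of $\gr M$. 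Since $E_1^{i,j} = \Ext^{i,j}_{\CC_{\gr A}}(k, \gr M)$ is computed as $H^{i+j}$ of the internal-degree-$i$ subcomplex $\Hom^i_{\gr A}(P_\bullet, \gr M)$, it is in particular a subquotient of $\Hom^i_{\gr A}(P_{i+j}, \gr M)$; hence $E_1^{i, n-i} = 0$ for every $i \notin [\nu - n, \mu - n - 1]$. Convergence (Corollary \ref{spec}) transfers this vanishing to $E_\infty^{i, n-i} = \gr^i \Ext^n_{\CC_A}(k, M)$ in the same range.

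Second, I would translate the vanishing of graded pieces into bounds on the filtration of $\Ext^n_{\CC_A}(k, M)$ itself. The filtration is exhaustive, as $K = \bigcup_s \Fil^s K$ by the construction of ${}^*\Hom$ in Lemma \ref{starhom} and $H^n$ commutes with this filtered union; it is also separated, since $\Fil^\mu M = 0$ forces $\Fil^\mu K = 0$ and hence $\Fil^\mu \Ext^n_{\CC_A}(k, M) = 0$. Combined with the vanishing of graded pieces outside $[\nu - n, \mu - n - 1]$, this forces $\Fil^{\nu - n} \Ext^n_{\CC_A}(k, M) = \Ext^n_{\CC_A}(k, M)$.

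Third, I would iterate Corollary \ref{fil}. Under hypothesis (\ref{van}), each restriction $\Ext^n_{\CC_{A^{(k)}}}(k, M) \to \Ext^n_{\CC_{A^{(k+1)}}}(k, M)$ carries $\Fil^i$ into $\Fil^{i+1}$; composing $m$ such shifts sends every class in $\Ext^n_{\CC_A}(k, M) = \Fil^{\nu - n} \Ext^n_{\CC_A}(k, M)$ into $\Fil^{\nu - n + m} \Ext^n_{\CC_{A^{(m)}}}(k, M)$. On the $A^{(m)}$-side the separatedness argument applies verbatim — it uses only $\Fil^\mu M = 0$ and does \emph{not} require Koszulness of $\gr A^{(m)}$ — and gives $\Fil^\mu \Ext^n_{\CC_{A^{(m)}}}(k, M) = 0$. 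The restriction therefore vanishes once $\nu - n + m \geq \mu$, i.e.\ $m \geq \text{amp}(M) + n$, and a fortiori when $m > \text{amp}(M) + n$. The main obstacle is really the first step: without Koszulness one only knows that the generators of $P_n$ have degree $\geq n$, which bounds $E_1^{i, n-i}$ from above in $i$ but not from below, so the filtration window would fail to be uniformly bounded below and the required $m$ would depend on the class.
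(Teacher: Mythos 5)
Your proposal follows essentially the same route as the paper: establish the two-sided vanishing of $E_1^{i,j}$ via Lemma \ref{degbas} and Koszulness (with the internal degree $n+i=2i+j$ confined to $[\nu,\mu-1]$), transfer to $E_\infty$ by convergence, conclude the filtration on $\Ext^n_{\CC_A}(k,M)$ is pinned between $\nu-n$ and $\mu-n$, and then iterate Corollary \ref{fil}. Your phrasing that $E_1^{i,n-i}$ is a \emph{subquotient} of $\Hom^i_{\gr A}(P_{i+j},\gr M)$ is actually slightly more careful than the equality the paper writes (which literally requires vanishing differentials and hence $\gr M=k$), and your remark that Koszulness of $\gr A^{(m)}$ for $m>0$ is not needed is a correct reading of the argument that the paper leaves implicit.
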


\begin{proof}
First we observe that on the initial page of the spectral sequence
$$
\text{$E_1^{i,j}=0$ in the region $2i+j \geq \mu$, \underline{and} in $2i+j<\nu$ since $\gr A$ is Koszul.}
$$
Indeed, by Lemma \ref{degbas} we know $P_{i+j}$ is generated by elements $e$ of degree $\geq i+j$. Any degree $i$ map $\phi: P_{i+j}\rightarrow \gr M$ takes $e$ to an element
$\phi(e)$ of degree $\geq 2i+j$. Assuming $2i+j \geq \mu$ we must have $\phi=0$. When $\gr A$ is Koszul $|\phi(e)|=2i+j$ and therefore $\phi=0$ if $2i+j<\nu$. 

Consequently, on the infinite page $E_{\infty}^{i,j}=\gr^i \Ext_{\CC_A}^{i+j}(k,M)$ also vanishes when $2i+j \geq \mu$ or $2i+j<\nu$.
In other words, for a fixed $n$, the filtration $\Fil^i \Ext_{\CC_A}^{n}(k,M)$ becomes stationary and therefore zero for $i \geq \mu-n$, and it equals 
$\Ext_{\CC_A}^{n}(k,M)$ for $i<\nu-n$ since we know the filtration is exhaustive as observed earlier. This allows us to strengthen the conclusion of Corollary \ref{final} when $\gr A$ is Koszul. Its proof shows that the restriction map in question is zero as long as $m>\mu-\nu+n$.
\end{proof}

The uniformity of how deep we have to go to get vanishing of the restriction map plays a critical role in this paper. 






\subsection*{Acknowledgments} I would like to thank Julien Hauseux, Karol Koziol, and Jake Postema for numerous helpful conversations and correspondence related to this work, and Peter Schneider for hosting me during an inspiring visit to M\"{u}nster in July 2015. Also many thanks to Fred Diamond, Michael Harris, and Marie-France Vigneras for their feedback on the initial announcement of the results of this paper. After circulating a first draft I received very helpful feedback from Noriyuki Abe, Jan Kohlhaase, Karol Koziol and Niccolo' Ronchetti which led to significant improvements. I am grateful for their careful reading of the manuscript. Last but not least a big thank you to the anonymous referee for meticulously combing through the manuscript and returning corrections and thoughtful suggestions.



\bigskip

\noindent {\it{E-mail address}}: {\texttt{csorensen@ucsd.edu}}

\noindent {\sc{Department of Mathematics, UCSD, La Jolla, CA, USA.}}


\begin{thebibliography}{xxxxxx}

\bibitem[Abd14]{Abd14} R. Abdellatif, 
{\it{Classification des repr\'{e}sentations modulo $p$ de $\SL(2,F)$}}. 
Bull. Soc. Math. France 142 (2014), no. 3, 537--589. 

\bibitem[BGS96]{BGS96} A. Beilinson, V. Ginzburg and W. Soergel, {\it{Koszul duality patterns in representation theory}}. J. Amer. Math. Soc. 9 (1996), no. 2, 473--527.

\bibitem[Bjo87]{Bjo87} J.-E. Bj\"{o}rk, {\it{Filtered Noetherian rings}}. Noetherian rings and their applications (Oberwolfach, 1983), 59--97, Math. Surveys Monogr., 24, Amer. Math. Soc., Providence, RI, 1987.

\bibitem[BL94]{BL94} L. Barthel and R. Livn\'{e}, {\it{Irreducible modular representations of $\GL_2$ of a local field}}. Duke Math. J. 75 (1994), no. 2, 261--292.

\bibitem[Bre03]{Bre03} C. Breuil, {\it{Sur quelques repr\'{e}sentations modulaires et $p$-adiques de $\GL_2(\Q_p)$}}. I. Compositio Math. 138 (2003), no. 2, 165--188.

\bibitem[Bru66]{Bru66} A. Brumer, {\it{Pseudocompact algebras, profinite groups and class formations}}. J. Algebra 4 1966 442--470.

\bibitem[Cas95]{Cas95} B. Casselman, {\it{Introduction to the theory of admissible representations of $p$-adic reductive groups}}. Unpublished notes, May 1995.

\bibitem[DdSMS]{DdSMS} J. Dixon, M. du Sautoy, A. Mann, and D. Segal, {\it{Analytic pro-p groups}}. Second edition. Cambridge Studies in Advanced Mathematics, 61. Cambridge University Press, Cambridge, 1999.

\bibitem[Dem86]{Dem86} M. Demazure, {\it{Lectures on p-divisible groups}}. Reprint of the 1972 original. Lecture Notes in Mathematics, 302. Springer-Verlag, Berlin, 1986.

\bibitem[FF74]{FF74} R. Fossum and H.-B. Foxby, {\it{The category of graded modules}}. Math. Scand. 35 (1974), 288--300.

\bibitem[GM03]{GM03} S. Gelfand  and Y. Manin, {\it{Methods of homological algebra}}. Second edition. Springer Monographs in Mathematics. Springer-Verlag, Berlin, 2003.

\bibitem[Gru79]{Gru79} L. Gr\"{u}nenfelder, {\it{On the homology of filtered and graded rings}}. J. Pure Appl. Algebra 14 (1979), no. 1, 21--37.

\bibitem[Har16]{Har16} M. Harris, 
{\it{Speculations on the modp representation theory of p-adic groups}}. 
Ann. Fac. Sci. Toulouse Math. (6) 25 (2016), no. 2-3, 403--418. 

\bibitem[HKN]{HKN} A. Huber, G. Kings, and N. Naumann, {\it{Some complements to the Lazard isomorphism}}. Compos. Math. 147 (2011), no. 1, 235--262.

\bibitem[HV12]{HV12} G. Henniart and M.-F. Vigneras, 
{\it{Comparison of compact induction with parabolic induction}}. 
Pacific J. Math. 260 (2012), no. 2, 457--495. 

\bibitem[IM65]{IM65} N. Iwahori and H. Matsumoto, {\it{On some Bruhat decomposition and the structure of the Hecke rings of $\frak{p}$-adic Chevalley groups}}. Inst. Hautes \'{E}tudes Sci. Publ. Math. No. 25 (1965) 5--48.

\bibitem[Koh17]{Koh17} J. Kohlhaase, {\it{Smooth duality in natural characteristic}}. Adv. Math. 317 (2017), 1--49. 

\bibitem[Koh18]{Koh18} J. Kohlhaase, {\it{Coefficient systems on the Bruhat-Tits building and pro-p Iwahori-Hecke modules}}. Preprint, 2018. arXiv:1802.10502

\bibitem[Koz16]{Koz16} K. Koziol, {\it{A classification of the irreducible mod-$p$ representations of $U(1,1)(\Q_{p^2}/\Q_p)$}}. Ann. Inst. Fourier (Grenoble) 66 (2016), no. 4, 1545--1582.

\bibitem[Kra17]{Kra17} U. Kr\"{a}hmer, {\it{Notes on Koszul algebras}}. Unpublished notes available online.

\bibitem[Laz65]{Laz65} M. Lazard, {\it{Groupes analytiques p-adiques}}. Inst. Hautes \'{E}tudes Sci. Publ. Math. No. 26 (1965) 389--603.

\bibitem[Lod98]{Lod98} J.-L. Loday, {\it{Cyclic homology}}. Appendix E by Mar�a O. Ronco. Second edition. Chapter 13 by the author in collaboration with Teimuraz Pirashvili. Grundlehren der Mathematischen Wissenschaften [Fundamental Principles of Mathematical Sciences], 301. Springer-Verlag, Berlin, 1998.

\bibitem[Oll14]{Oll14} R. Ollivier, 
{\it{Resolutions for principal series representations of p-adic $\GL_n$}}. 
M\"{u}nster J. Math. 7 (2014), no. 1, 225--240. 

\bibitem[PT02]{PT02} P. Polo and J. Tilouine, 
{\it{Bernstein-Gelfand-Gelfand complexes and cohomology of nilpotent groups over $\Z_{(p)}$ for representations with p-small weights}}. 
Cohomology of Siegel varieties.
Ast\'{e}risque No. 280 (2002), 97--135. 

\bibitem[Rab07]{Rab07} J. Rabinoff, {\it{The Bruhat-Tits building of a $p$-adic Chevalley group and an application to representation theory}}. Senior thesis at Harvard University,
February 2007.

\bibitem[Ron18]{Ron18} N. Ronchetti, {\it{On the cohomology of integral p-adic unipotent radicals}}. Preprint, 2018. arXiv:1808.03923

\bibitem[Sch11]{Sch11} P. Schneider, {\it{p-adic Lie groups}}. Grundlehren der Mathematischen Wissenschaften [Fundamental Principles of Mathematical Sciences], 344. Springer, Heidelberg, 2011.

\bibitem[Sch15]{Sch15} P. Schneider, {\it{Smooth representations and Hecke modules in characteristic p}}. Pacific J. Math. 279 (2015), no. 1-2, 447--464.

\bibitem[Se64]{Se64} J.-P. Serre, {\it{Lie algebras and Lie groups}}. 1964 lectures given at Harvard University. Corrected fifth printing of the second (1992) edition. Lecture Notes in Mathematics, 1500. Springer-Verlag, Berlin, 2006.

\bibitem[Se95]{Se95} J.-P. Serre, {\it{Groupes analytiques p-adiques}} (d'apr\`{e}s Michel Lazard) [MR0176987]. (French) [p-adic analytic groups (after Michel Lazard)] S\'{e}minaire Bourbaki, Vol. 8, Exp. No. 270, 401--410, Soc. Math. France, Paris, 1995.

\bibitem[Sjo74]{Sjo74} G. Sj\"{o}din,
{\it{On filtered modules and their associated graded modules.}}
Math. Scand. 33, (1973), 229--249 (1974). 

\bibitem[ST05]{ST05} P. Schneider and J. Teitelbaum, {\it{Duality for admissible locally analytic representations}}. Represent. Theory 9 (2005), 297--326.

\bibitem[Tit79]{Tit79} J. Tits, {\it{Reductive groups over local fields}}. Automorphic forms, representations and L-functions (Proc. Sympos. Pure Math., Oregon State Univ., Corvallis, Ore., 1977), Part 1, pp. 29--69, Proc. Sympos. Pure Math., XXXIII, Amer. Math. Soc., Providence, R.I., 1979.

\end{thebibliography}
\end{document}